\newcommand{\dfn}[1]{\textcolor{red}{\emph{#1}}}
\definecolor{LightBlue}{rgb}{0.392,0.392,1} 
\renewcommand\thesubfigure{\Alph{subfigure}}
\Crefname{subfigure}{Figure}{Figures}
\newtheorem{theorem}{Theorem}[section]
\newtheorem{lemma}[theorem]{Lemma}
\newtheorem{prop}[theorem]{Proposition}
\newtheorem{conj}[theorem]{Conjecture}
\newtheorem{definition}[theorem]{Definition}
\crefname{theorem}{Theorem}{Theorems}
\crefname{lemma}{Lemma}{Lemmas}
\crefname{claim}{Claim}{Claims}
\crefname{prop}{Proposition}{Propositions}
\crefname{conj}{Conjecture}{Conjectures}
\crefname{corollary}{Corollary}{Corollaries}
\crefname{remark}{Remark}{Remarks}
\crefname{definition}{Definition}{Definitions}
\theoremstyle{remark}
\newtheorem{remark}[theorem]{Remark}
\DeclareMathOperator{\mat}{Mat}
\DeclareMathOperator{\End}{End}
\DeclareMathOperator{\Tr}{Tr}
\DeclareMathOperator{\op}{op}
\DeclareMathOperator{\Ad}{Ad}
\DeclareMathOperator{\Prob}{\mathbb{P}}
\DeclareMathOperator{\GL}{GL}
\newcommand{\Hc}{\mathcal{H}}
\newcommand{\eps}{\varepsilon}
\newcommand{\EE}{\mathbb{E}}
\newcommand{\QQ}{\mathbb{Q}}
\newcommand{\CC}{\mathbb{C}}
\newcommand{\ZZ}{\mathbb{Z}}
\newcommand{\RR}{\mathbb{R}}
\newcommand{\TT}{\mathbb{T}}
\newcommand{\Ac}{\mathcal{A}}
\newcommand{\Fc}{\mathcal{F}}
\newcommand{\Gc}{\mathcal{G}}
\newcommand{\surjto}{\twoheadrightarrow}
\newcommand{\injto}{\hookrightarrow}
\newcommand{\1}{\mathds{1}}
\title{Taming Irrationality: An Invariance Principle for the Random Billiard Walk}
\author[Ruben Carpenter]{Ruben Carpenter}
\address[]{Department of Mathematics, Yale University, New Haven, CT 06511, USA}
\email{ruben.carpenter@yale.edu} 
\begin{document}

\begin{abstract}
    The \emph{random billiard walk} is a stochastic process $(L_t)_{t\geq 0}$ in which a laser moves through the Coxeter arrangement of an affine Weyl group in $\RR^d$, reflecting at each hyperplane with probability $p\in (0, 1)$ and transmitting otherwise. Defant, Jiradilok, and Mossel introduced this model from the perspective of algebraic combinatorics and established that, when the initial direction is rational, $L_t/\sqrt{t}$ converges in distribution to a spherical Gaussian. We bring analytic tools from ergodic theory and probability to the problem, and prove this central limit theorem for all initial directions. More strongly, we show the rescaled trajectories $t\mapsto n^{-1/2}L_{tn}$ converge to isotropic Brownian motion. Away from directions with rational dependencies, the covariance is continuous in $p$ and the initial direction.  
\end{abstract} 

\maketitle

\section{Introduction}

\subsection{The Random Billiard Walk}
\cref{fig: rational} depicts a laser beam moving through an infinite grid of equilateral triangles cut out by mirrors. It travels at unit speed. A probability $p\in (0, 1)$ is fixed, and every time the laser encounters the mirror, it either
\begin{itemize}
    \item reflects (angle of incidence equals angle of reflection) with probability $p$, or
    \item transmits straight with probability $1-p$.
\end{itemize}
Outcomes of distinct encounters are independent, and we assume no two mirrors are struck simultaneously. What can we say about the limiting distribution of the laser?

\begin{figure}[H]
    \centering
    \begin{subfigure}[b]{0.45\textwidth}
        \centering
        \includegraphics[width=\linewidth]{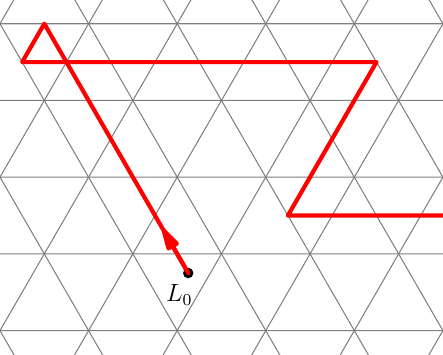}
        \caption{The random billiard walk in $\widetilde{W} = \widetilde{A}_2$.}
        \label{fig: rational}
    \end{subfigure}
    \hfill
    \begin{subfigure}[b]{0.45\textwidth}
        \centering
        \includegraphics[width=\linewidth]{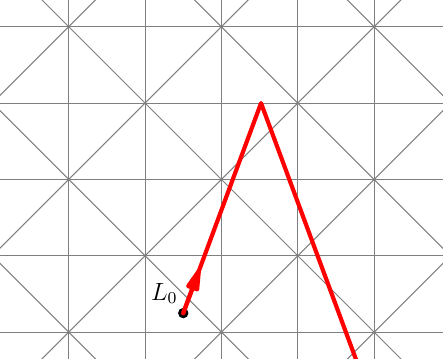}
        \caption{The random billiard walk in $\widetilde{W} = \widetilde{B}_2$.}
        \label{fig: irrational}
    \end{subfigure}
    \caption{Sample trajectories of the random billiard walk.}
    \label{fig: sample}
\end{figure}

This process can be defined for any hyperplane arrangement in Euclidean space. Let $\widetilde{W}$ be an affine Weyl group, whose Coxeter (hyperplane) arrangement acts on $\RR^d$. Defant, Jiradilok, and Mossel \cite{DJM25} introduced the \dfn{random billiard walk} $(L_t)_{t\geq 0}$, tracking the position of a laser following the reflection/transmission dynamics above. It is specified by an intial position $L_0\in \RR^d$, an initial direction $b$ in the unit sphere $S^{d-1}$, and $p\in (0, 1)$. Here we establish an invariance principle for this process, under any choice of initial parameters.

\subsection{Background and Context} 

This problem lies at the intersection of several active research areas. In dynamical systems, billiards constitute a classical and widely studied family of models, where researchers typically analyze the long-term behavior of a particle deterministically reflecting off domain boundaries using tools from ergodic theory and analysis.

Combinatorial billiards is a budding subfield of dynamical algebraic combinatorics, which studies discrete versions of these systems using algebraic and combinatorial methods. These often exhibit greater rigidity than their continuous counterparts, enabling more precise analysis. For example, \cite{DJ22, Z25} both establish sharp bounds on the number of cycles in a triangular grid billiards system. Barkley, Defant, Hodges, Kravitz, and Lee \cite{BDHKL25} classify the dynamics of \dfn{Bender–Knuth billiards}, where a discretized light beam moves through an arrangement of transparent and one-way mirrors. There has also been interest in randomized versions of combinatorial billiard systems: Defant introduced a \dfn{reduced random billiard walk} \cite{D24} as a variant of Lam's \dfn{reduced random walk} \cite{L15}.

The random billiard walk was introduced from this combinatorial perspective. Although the random billiard walk evolves in continuous time, the sequence of reflections applied encodes a discrete sequence of elements of the affine Weyl group. The analysis in \cite{DJM25}, however, relied on structural properties of the initial direction $b$, leaving much of the general picture open. In this work, we remove this restriction by combining machinery from algebraic combinatorics with analytic methods.

\subsection{Main Results}

We assume without loss of generality that the affine Weyl group $\widetilde{W}$ is irreducible. If this is not the case, then the system decomposes into independent components.

A direction $b$ is \dfn{rational} if it is rational with respect to the hyperplane arrangement, i.e. a scalar multiple of a vector in the coroot lattice $Q^\vee$. We say $b$ is \dfn{fully irrational} if its coordinates in \( Q^\vee \) are linearly independent over $\QQ$, and denote the set of such directions by $\mathcal{I}\subset S^{d-1}$. The direction in \cref{fig: rational} is rational; those in \cref{fig: irrational} or \cref{fig: RBW simulation} are not. 

For rational $b$, \cite{DJM25} proved that the rescaled position $L_t/\sqrt{t}$ converges in distribution to a multivariate Gaussian $\mathcal{N}(0, \sigma^2 I_d)$. Our first result extends this to all directions.

\begin{theorem}\label{thm: main theorem}
    Let \( L_0 \in \RR^d \), \( b \in S^{d-1} \), and \( p \in (0, 1) \). There exists $\sigma =\sigma_{b, p, L_0}> 0$ such that 
    \[
    \frac{L_t}{\sqrt{t}} \xrightarrow[]{\mathcal{D}} \mathcal{N}(0, \sigma^2 I_d).
    \]
    Moreover, for $(b, p)\in \mathcal{I}\times (0, 1)$ the variance $\sigma$ is a continuous function independent of $L_0$.
\end{theorem}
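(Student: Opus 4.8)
The plan is to realize the laser as a piecewise-deterministic Markov process (PDMP) on a \emph{compact} state space, run the standard machinery for central limit theorems of additive functionals of geometrically ergodic Markov processes, and then exploit the symmetry of the Coxeter arrangement to pin down the limiting covariance and its continuity. Concretely: the arrangement is invariant under translations by the coroot lattice $Q^\vee$ and under the linear action of the finite Weyl group $W$, so set $Y_t := (X_t, v_t)$, where $X_t := L_t \bmod Q^\vee$ lives on the torus $\TT^d := \RR^d / Q^\vee$ and $v_t \in S^{d-1}$ is the current direction. Each reflection applies some $s_\alpha$ to $v$ and transmission leaves it unchanged, so $v_t$ stays in the finite orbit $W\cdot b$; hence $Y_t$ evolves on the compact space $\TT^d \times (W\cdot b)$ by deterministic translation $X_t \mapsto X_t + tv_t$ between crossings, with an independent $\mathrm{Bernoulli}(p)$ deciding a reflection of $v$ each time $X$ meets the image $\Gamma \subset \TT^d$ of the arrangement. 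Then $Y$ is Markov and $L_t = L_0 + \int_0^t v_s\,ds$ is an additive functional of it. A line in any fixed direction crosses $\Theta(1)$ hyperplanes per unit length, so only finitely many crossings occur in finite time and the crossing rate is bounded above and (for the directions that arise) below; thus $L_t$ and the embedded jump chain are comparable and it suffices to prove the invariance principle for either.

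\textbf{Ergodicity.} I would show $Y$ admits a unique invariant probability measure $\mu$ and is geometrically ergodic, $\|P_t(y,\cdot)-\mu\|_{\TV} \le C e^{-\gamma t}$. This is the technical heart. One proves a Doeblin minorization $P_{t_0}(y,\cdot) \ge \eps\,\nu(\cdot)$ uniformly in $y$ via a controllability argument: from any $(x,v)$ one prescribes the reflect/transmit outcomes at the next uniformly bounded number of crossings (each prescription costing a factor $\ge \min(p,1-p)$ to a bounded power) so as to drive $v$ through all of $W\cdot b$, while the deterministic translations in the $\RR^d$-spanning set of directions $\{wb : w\in W\}$ — together with equidistribution of the linear flow on $\TT^d$ when $b$ is fully irrational — spread $X$ over an open set with density bounded below, yielding a spectral gap. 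When $b$ is rational, the process is supported on a lower-dimensional invariant set and the resulting CLT is exactly that of \cite{DJM25}; for the intermediate directions one runs the same estimate on the orbit closure of the flow.

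\textbf{CLT, isotropy, positivity, Brownian limit.} By uniqueness of $\mu$ and $W$-invariance of the data, $g_*\mu = \mu$ for all $g \in W$; hence $\EE_\mu[v_0] = 0$ (the reflection representation has no nonzero fixed vector), so the process has no drift, and the asymptotic covariance $\Sigma$ — which exists by the martingale-approximation (Poisson equation) argument for exponentially mixing processes — is $W$-equivariant, so $\Sigma = \sigma^2 I_d$ by Schur's lemma (the reflection representation is absolutely irreducible). Writing $h$ for the bounded solution of $\mathcal{L} h = -v$ (bounded because $h(y) = \int_0^\infty P_t v(y)\,dt$ converges uniformly by the spectral gap), Dynkin's formula gives $L_t - L_0 = h(Y_0) - h(Y_t) + N_t$ with $N$ a martingale having stationary ergodic increments; the vector martingale CLT and FCLT then deliver $L_t/\sqrt t \xrightarrow{\mathcal{D}} \mathcal{N}(0,\sigma^2 I_d)$ and $(n^{-1/2}L_{tn})_{t\ge 0} \Rightarrow (\sigma B_t)_{t\ge 0}$ for isotropic Brownian motion $B$, the boundary term $h(Y_0)-h(Y_t)$ being negligible after rescaling. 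Finally $\sigma^2 > 0$: if $\sigma^2 = 0$ then $\langle N\rangle_t \equiv 0$ (stationary increments force $t\mapsto\EE\langle N\rangle_t$ to be linear with slope $\sigma^2$), so $N\equiv 0$ and $L_t - L_0 = h(Y_0) - h(Y_t)$ is bounded by $2\|h\|_\infty$ for all $t$; but with positive probability the laser transmits at every crossing in $[0,t]$, whence $L_t - L_0 = t v_0$ is unbounded — a contradiction.

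\textbf{Continuity and independence of $L_0$.} The measure $\mu$, and hence $\sigma^2$, is defined without reference to the starting state, and ergodicity erases the initial condition, so $\sigma$ is independent of $L_0$. For $(b,p)\in \mathcal I\times (0,1)$ the stabilizer of $b$ in $W$ is trivial (a nontrivial stabilizing $w$ would give an integer linear relation among the $Q^\vee$-coordinates of $b$), so the state space is canonically $\TT^d\times W$ for all such $b$; the generator's ingredients — the crossing times (smooth functions of $b$ at generic configurations) and the jump kernel (affine in $p$) — vary continuously in $(b,p)$, the invariant measure and the resolvent of $\mathcal{L}$ at $0$ on mean-zero observables vary continuously given a locally uniform spectral gap, and $\sigma^2$, being a continuous functional of this data through the Poisson equation, is continuous. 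The principal obstacle, here and in the ergodicity step, is to make the controllability/equidistribution estimate \emph{quantitative and locally uniform over $\mathcal I$}: the mixing time degrades as $b$ approaches a rational direction or a wall of the arrangement, and controlling this is where the bulk of the ergodic-theoretic work lies.
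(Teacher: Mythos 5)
Your framework has the right ingredients in several places (the quotient by $Q^\vee$, the finite direction orbit $W\cdot b$, Schur's lemma for isotropy, the comparison of continuous time with the jump chain, and the warning that uniformity near rational directions is the hard part), but the load-bearing step --- a Doeblin minorization $P_{t_0}(y,\cdot)\ge \eps\,\nu(\cdot)$ and total-variation geometric ergodicity of $Y_t=(L_t \bmod Q^\vee, v_t)$ --- is false. All the randomness in this process is a finite string of Bernoulli flips, and between crossings the flow is a deterministic translation; hence for every fixed $t_0$ and $y$ the kernel $P_{t_0}(y,\cdot)$ is supported on finitely many points, and that finite support translates continuously with $y$. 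No fixed reference measure $\nu$ can be minorized uniformly, and the time-$t$ law (purely atomic) cannot converge in total variation to the natural invariant measure (Lebesgue on $\TT^d$ times uniform on $W\cdot b$). The paper is explicit that this route fails and works around it: total-variation mixing is proved only for the \emph{finite} direction component $\pi(w_n)\in W$ (via contraction of time-inhomogeneous convolution operators on $\RR[W]$), while the position is controlled only in a weak-$*$ sense, through finite quotients $\widetilde W/\lambda Q^\vee$ and through unique ergodicity of the translation flow (Kronecker--Weyl applied to the interlaced torus rotations that generate the cutting sequence).

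This is not a patchable technicality, because everything downstream leans on the nonexistent spectral gap. The boundedness of the corrector $h=\int_0^\infty P_t v\,dt$ happens to be true, but only because the \emph{direction} marginal mixes (the paper's ``almost martingale'' bound), not because the full process does. More seriously, the existence of the asymptotic covariance --- equivalently the law of large numbers for $\langle N\rangle_t/t$ from the \emph{given} starting point $(L_0,b)$, not from a stationary start --- is exactly the content of the paper's Sections 5 and 6: one must show the cutting sequence has well-defined pattern frequencies (uniformly in the time window) and then sum the interaction terms $\Sigma_m$ with the exponential decorrelation supplying domination. You invoke this as ``standard for exponentially mixing processes,'' which begs the question. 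Your positivity argument (if $\sigma=0$ then $N\equiv 0$ and $L_t-L_0$ is bounded, contradicting the always-transmit event) is an attractive shortcut, but it needs stationary increments, hence a stationary start, and transferring back to an arbitrary $L_0$ again requires the missing uniform ergodicity; the paper instead proves $\sigma>0$ by a bootstrapping inequality $f(n+m)\ge f(n)+f(m)-2C\sqrt{f(m)}$ for the conditional second moment, seeded by equidistribution on finite quotients. Similarly, continuity of $\sigma_{b,p}$ on $\mathcal I\times(0,1)$ comes in the paper from a mixing constant $c$ independent of $b$ plus continuity of the Kronecker--Weyl frequencies, not from continuity of a resolvent at a spectral gap that is not there.
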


Our second result strengthens this to a functional central limit theorem. For each $n$, define 
the rescaled trajectory
\begin{equation}\label{eq: rescaled trajectory}
    L^{(n)}(t) := \frac{1}{\sqrt{n}} L_{nt} \qquad t\in [0, 1],
\end{equation}
which is a random variable in $C([0, 1], \RR^d)$, the Banach space of continuous functions from $[0, 1]$ to $\RR^d$ under the uniform topology.

\begin{theorem}\label{thm: main theorem 2}
     The rescaled trajectories $L^{(n)}\in C([0, 1], \RR^d)$ converge in distribution to a Brownian motion in $\RR^d$ with covariance $\sigma^2 I_d$. 
\end{theorem}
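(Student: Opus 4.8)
The plan is to upgrade the one-dimensional (in the time variable) central limit theorem of Theorem 1.1 to a functional statement by the standard two-ingredient recipe for weak convergence in $C([0,1],\RR^d)$: convergence of finite-dimensional distributions, plus tightness. The natural engine underneath both ingredients is the same ergodic-theoretic structure that must already power the proof of Theorem 1.1. Concretely, the reflection/transmission choices together with the "which wall is hit next" data form a Markov-type driving sequence, and the increments of $L$ over successive wall-crossing epochs are a functional of a stationary, ergodic (in the fully irrational case, via unique ergodicity of an irrational rotation/translation on a torus; in general via an appropriate skew-product) process. Write $L_t = \sum_{k} X_k + (\text{boundary term})$ where $X_k$ is the displacement during the $k$-th inter-collision segment; the $X_k$ have a stationary distribution with mean zero (mean zero being forced by the $O(\sqrt t)$ fluctuation scale established in Theorem 1.1) and finite second moment, and the relevant covariance is exactly the $\sigma^2 I_d$ identified there. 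The isotropy $\sigma^2 I_d$ (rather than a general covariance) is inherited verbatim from Theorem 1.1 and needs no re-derivation.

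First I would set up the collision-counting process $N_t = \#\{\text{wall crossings up to time }t\}$ and show $N_t/t \to c$ a.s.\ for a deterministic constant $c>0$ (the asymptotic collision rate), via the ergodic theorem applied to inter-collision times, which are themselves a bounded stationary functional of the driving process; boundedness (or at least uniform integrability) of inter-collision times follows from the geometry of the Coxeter arrangement — between crossings the laser traverses an alcove, and alcoves have bounded inradius and circumradius, so segment lengths lie in a fixed compact interval away from $0$ except on a measure-zero set of degenerate directions, which the irrationality/genericity hypotheses exclude or which contribute negligibly. Second, I would invoke a functional CLT for the partial-sum process of the stationary sequence $(X_k)$ — e.g.\ the Donsker-type invariance principle for functionals of ergodic systems with good mixing, or a martingale-approximation (Gordin) argument if the underlying dynamics mixes well enough — to get $n^{-1/2}\sum_{k\le \lfloor ns\rfloor} X_k \Rightarrow B(s)$ in $C([0,1],\RR^d)$ with covariance $c^{-1}\sigma^2 I_d$ appropriately normalized. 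Third, I would compose with the time-change $s \mapsto N_{ns}/n \to cs$ (uniformly on $[0,1]$, by the a.s.\ convergence above promoted to uniform convergence of the nondecreasing functions $N_{ns}/n$ via Dini's theorem) and apply the standard lemma that weak convergence is preserved under composition with a uniformly-convergent deterministic time change; the constants combine so that the limit is Brownian motion with covariance $\sigma^2 I_d$, matching Theorem 1.1 at $t=1$. Finally, I would bound the discretization error $L_{nt} - \sum_{k\le N_{nt}} X_k$ (the partial last segment) uniformly in $t$ by a single inter-collision displacement, which is $o(\sqrt n)$, hence asymptotically negligible, so it does not affect the limit.

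The main obstacle I expect is tightness, i.e.\ controlling the modulus of continuity of $L^{(n)}$ uniformly in $n$. The difficulty is that the driving sequence is not i.i.d.\ and its mixing rate depends delicately on the Diophantine properties of the initial direction $b$: for $b\in\mathcal I$ one has good equidistribution on a torus but the quantitative mixing can be slow, and for general $b$ one must work on a lower-dimensional invariant subtorus or skew-product where ergodicity holds but mixing may fail outright. I would handle this by establishing a maximal inequality of the form $\EE\big[\max_{j\le m}|\sum_{k=i+1}^{i+j} X_k|^4\big] \le C m^2$ (a fourth-moment / Billingsley-type bound), which requires summable enough decay of the fourth-order joint cumulants of $(X_k)$; this is the technical heart. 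If direct mixing estimates are too weak, the fallback is the martingale-approximation route: write $X_k = M_k + (g\circ T^{k} - g\circ T^{k+1})$ for a martingale-difference $M_k$ and a coboundary with $g\in L^4$, reducing tightness to the (classical) functional CLT and Doob inequality for martingales, with the coboundary telescoping to a uniformly negligible term. Continuity of $\sigma$ on $\mathcal I\times(0,1)$ plays no role here; it is needed only for Theorem 1.1, and Theorem 1.2 simply quotes the value of $\sigma$ from there.
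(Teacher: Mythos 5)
The strategy you outline (finite-dimensional distributions plus tightness, with a martingale approximation as the engine) is in the right family --- the paper does prove \cref{thm: main theorem 2} by a martingale approximation followed by a functional martingale CLT --- but your proposal rests on a premise that fails here: that the increments $X_k$ form a \emph{stationary} sequence to which off-the-shelf invariance principles for stationary ergodic processes (Gordin, the coboundary decomposition $X_k = M_k + g\circ T^k - g\circ T^{k+1}$) can be applied. The cutting sequence $I=(i_1,i_2,\dots)$ is a fixed, deterministic, generally aperiodic sequence, so the law of the $n$-th increment genuinely depends on $n$: the process is time-inhomogeneous and not stationary, and there is no given measure-preserving system in which the walk is a generic orbit (for directions with rational dependencies the orbit closure is a proper, lower-dimensional subtorus with a singular invariant measure). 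This is precisely why the paper cannot quote a stationary-process FCLT and instead proves \emph{uniform-in-time} quantitative estimates: exponential mixing of the direction component on the finite Weyl group $W$ uniformly in the starting time (\cref{lmm: quick mixing}), uniform equidistribution of windows of $I$ via a uniform Kronecker--Weyl theorem (\cref{lmm: equidistribution of windows}), and uniform convergence of the \emph{conditional} covariance $\frac1N\EE[(X_{N+n_0}-X_{n_0})(X_{N+n_0}-X_{n_0})^\top\mid\Fc_{n_0}]\to\sigma^2 I_d$ (\cref{lmm: limiting sigma}). Note that this conditional, uniform-in-$n_0$ statement is strictly stronger than what \cref{thm: main theorem} asserts, so you cannot simply ``quote the value of $\sigma$ from there''; it is exactly what is needed to verify the conditional-variance hypothesis of the martingale FCLT, and your sketch does not supply it.

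The paper's actual construction replaces your coboundary with something hand-built: consecutive steps are grouped into moving blocks of length $a_n=\lceil n^{1/3}\rceil$ separated by mixing blocks of length $b_n=\Theta(\log n)$, and the martingale is $M_n=\sum_i (A_i-\EE[A_i\mid\Gc_{i-1}])$, with the centering terms summable by exponential mixing and the mixing blocks negligible; tightness of the error is then handled by a vector Doob inequality rather than a fourth-moment chaining bound. Your fourth-moment route for tightness could in principle be made to work (the paper's \cref{lmm: higher moments converge} machinery gives $\EE[\|X_{n_0+m}-X_{n_0}\|^4]=O(m^2)$ uniformly in $n_0$), but as written your proposal defers this to ``the technical heart'' without an argument, and the cumulant decay you would need is exactly the uniform mixing input you have not established. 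Two smaller inaccuracies: the time change is deterministic, not ergodic-theoretic --- the number of hyperplane crossings in time $t$ is $k_b t+O(1)$ for every realization because the arrangement is reflection-invariant --- and the theorem must cover \emph{all} directions $b$, so appeals to ``genericity'' or measure-zero exclusions of directions are not available.
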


\begin{figure}[H]
    \centering
    \begin{subfigure}[b]{0.5\textwidth}
        \centering
        \includegraphics[width = \linewidth]{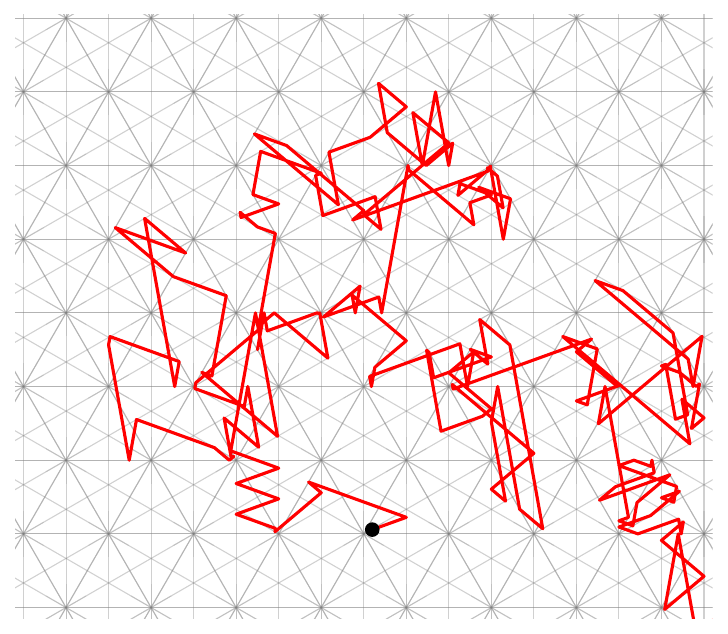}
        \caption{Simulated RBW in $\widetilde{G}_2$, $p = 0.3$, $200$ steps.}
        \label{fig: RBW simulation}
    \end{subfigure}
    \hfill
    \begin{subfigure}[b]{0.47\textwidth}
        \centering
        \includegraphics[width = \linewidth]{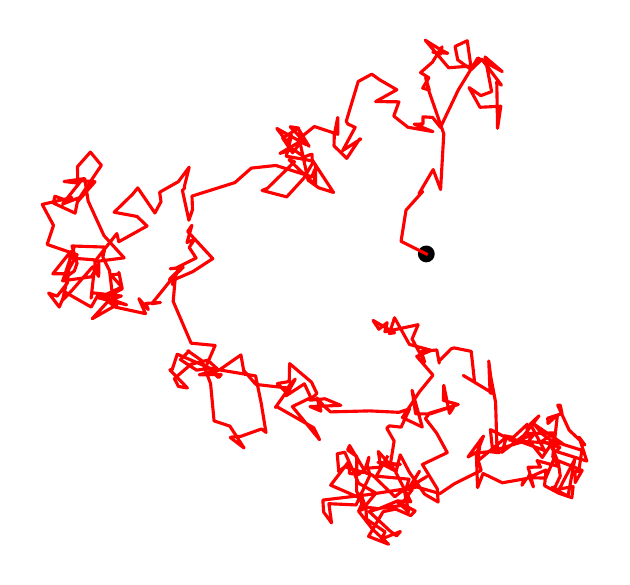}
        \caption{Simulated Brownian motion in $\RR^2$.}
        \label{fig: BM simulation}
    \end{subfigure}
    \caption{Illustration of the convergence in \cref{thm: main theorem 2}.}
    \label{fig:enter-label}
\end{figure}

\begin{remark}
    A classical theorem of Donsker \cite{P84} establishes this kind of convergence to a Brownian motion for the simple symmetric random walk on $\ZZ$. Note this can be viewed as a random billiard walk on the Coxeter arrangement of $\widetilde{A}_1$, where the mirrors are at integer points on $\RR$, and $p = 1/2$. Our theorem generalizes this classical invariance principle.
    
    Recall that the analysis of this walk naturally leverages the group structure of $\ZZ$ by breaking the walk into a sum of i.i.d.\ steps. Such a decomposition is unavailable here, but the underlying principle of using discrete algebraic structure remains central.
\end{remark}

\subsection{Roadmap} \label{sec: roadmap}

Let $\widetilde{W}$ be an irreducible affine Weyl group. We direct readers unfamiliar with these groups to \cref{sec: affine weyl groups}. While our results hold for all types, $\widetilde{A}_2$ (see \cref{fig: rational}) captures the relevant structure, and serves as a useful example to have in mind.

Each alcove has $d+1$ faces. A crucial algebraic observation from \cite{DJM25} is that there exists a $d+1$-coloring of the faces of the alcoves that is invariant under all reflections. We illustrate this in \cref{fig: numbering} and give a formal treatment in \cref{sec: background}. As a consequence, for any fixed $b$ and $L_0$, the cutting sequence $I = (i_1, i_2, i_3, \dots)$ of face labels is deterministic—it depends only on the direction $b$ and remains unchanged by reflections. This sequence is significant because the walk can be discretized by recording the center $X_n$ of the alcove the laser lies in after intersecting $n$ hyperplanes. Then $I$ becomes a ``sequence of instructions'': at time $n$, the walk crosses the face labeled with $i_n$ with probability $1-p$ (when it transmits), and stays in the same alcove with probability $p$ (when it reflects). In \cref{sec: discrete walk} we rigorously justify this, and argue that it suffices to understand the asymptotics of $(X_n)_{n\geq 0}$.

The direction $b$ is rational if and only if the cutting sequence $I$ is periodic. In that case, the displacement over each period is identically distributed (when conditioned on the direction at the start) and one can invoke standard central limit theorems as in \cite{DJM25}. For irrational directions, we no longer have these tools at our disposal. Instead, our goal will be to understand the asymptotic behavior of the walk through its global statistical features.

The convergence of the random billiard walk to a Brownian motion rests on two features: an unbiased direction and displacements that are asymptotically i.i.d. We first prove results establishing these. In \cref{sec: mixing} we show that the direction equidistributes exponentially fast, implying asymptotic independence of distant segments. In \cref{sec: walk grows} we lower-bound the growth rate of $X_n$, and in \cref{sec: ergodic} we show that the sequence $I$ is statistically regular. Together, these yield consistent covariance growth at the right scale in \cref{sec: covariance limit}.

In \cref{sec: higher moments} we show that all higher moments of $(X_n-X_0)/\sqrt{n}$ converge to those of a Gaussian, giving \cref{thm: main theorem}. To prove \cref{thm: main theorem 2} we need to consider the whole trajectory at once. We do this in \cref{sec: martingale approximation} by constructing a martingale that closely tracks $(X_n)_{n\geq 0}$, and invoking standard invariance principles. We retain \cref{sec: higher moments} for its independent interest and potential applicability to other models we introduce in \cref{sec: further directions}.

\section{Preliminaries and Background} \label{sec: background}

\subsection{Affine Weyl Groups}\label{sec: affine weyl groups}

A standard reference is~\cite{BB05}. These groups are generated by reflections across a hyperplane arrangement in $\RR^d$ positioned according to a root system.

\begin{definition}
    A finite set $\Phi \subset \RR^d$ spanning $\RR^d$ is a \dfn{root system} if:
    \begin{enumerate}
        \item \label{item: refl} For every $\alpha \in \Phi$, the set $\Phi$ is preserved under reflection about the hyperplane 
        \[
        H_\alpha^0 := \{x \in \RR^d : x^\top \alpha = 0 \}
        \]
        \item For all $\alpha, \beta \in \Phi$, the projection of $\beta$ on $\alpha$ is a half-integer multiple of $\alpha$. 
        \label{item: proj}
        \item If $\alpha \in \Phi$, the only other  multiple of $\alpha$ in $\Phi$ is $-\alpha$. 
    \end{enumerate}
\end{definition}

The \dfn{Weyl group} $W$ is generated by the reflections across all $H_\alpha^0$, which is finite by~\eqref{item: refl}. The action of $W$ on $\RR^d$ gives a representation $\rho\colon W \injto \GL(d, \RR)$. If $\rho$ is irreducible, $W$ is \dfn{irreducible}; such groups have been classified~\cite{BB05}. 

The \dfn{affine Weyl group} $\widetilde{W}$ is generated by reflections across all affine hyperplanes
\[
H_\alpha^k := \{x \in \RR^d : x^\top \alpha = k \}, \quad \alpha \in \Phi, \; k \in \ZZ.
\]
The set $\Hc$ of all $H_\alpha^k$ is called the \dfn{Coxeter arrangement} of $\widetilde{W}$. By~\eqref{item: proj} it partitions $\RR^d$ into congruent simplices called \dfn{alcoves}. The action of $\widetilde{W}$ on $\RR^d$ freely and transitively permutes the alcoves. Fixing a \dfn{fundamental alcove} $\Ac_0$, the map $w \mapsto w\cdot \Ac_0$ gives a bijection between elements of the affine Weyl group and alcoves. 

Let $s_0, \dots, s_d$ be the reflections about faces of $\Ac_0$. They generate $\widetilde{W}$. Moreover, the alcove $w \Ac_0$ is adjacent to the alcoves $ws_i \Ac_0$ for $0\leq i\leq d$. We now construct the labeling we need.

\begin{lemma}\label{lmm: labeling}
    Label the face between $w \Ac_0$ and $ws_i \Ac_0$ with $i$. This labeling is $\widetilde{W}$-invariant.
\end{lemma}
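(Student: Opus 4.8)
The plan is to show that the proposed face-labeling is well-defined and then that it is preserved by the action of every element of $\widetilde{W}$. The subtlety is that the rule ``label the face between $w\Ac_0$ and $ws_i\Ac_0$ with $i$'' assigns a label to a face by naming one of the two alcoves it separates as ``$w\Ac_0$''; we must check the label does not depend on which of the two adjacent alcoves we call $w\Ac_0$. So the first step is a consistency check: if $F$ is the common face of $w\Ac_0$ and $ws_i\Ac_0$, and we instead write the second alcove as $w'\Ac_0$ with $w' = ws_i$, then the same face $F$ lies between $w'\Ac_0$ and $w's_i\Ac_0 = ws_is_i\Ac_0 = w\Ac_0$, so the rule applied to $w'$ also returns the label $i$. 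Hence the labeling is well-defined on faces.

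\textbf{Second step: invariance.} Fix $v\in\widetilde{W}$ and let $F$ be the face between $w\Ac_0$ and $ws_i\Ac_0$, so $F$ has label $i$. Since $\widetilde{W}$ acts on $\RR^d$ by isometries permuting the alcoves, $vF$ is a face of the alcove $vw\Ac_0$, and it is the face separating $vw\Ac_0$ from $v(ws_i\Ac_0) = (vw)s_i\Ac_0$. Applying the labeling rule with the group element $vw$ in place of $w$, the face $vF$ receives label $i$. Thus $v$ carries each face of label $i$ to a face of label $i$, which is exactly $\widetilde{W}$-invariance. The only thing being used is associativity of the group action and the fact, recalled in the excerpt, that $w\Ac_0$ is adjacent precisely to the alcoves $ws_0\Ac_0,\dots,ws_d\Ac_0$; in particular the common face of two adjacent alcoves $w\Ac_0$ and $w'\Ac_0$ exists and $w' = ws_i$ for a unique $i$, so the labeling in fact assigns exactly one label in $\{0,1,\dots,d\}$ to every face of the arrangement.

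\textbf{Where the work really is.} There is essentially no hard analytic content here; the potential pitfall is the well-definedness issue above, together with making sure that ``every face of the Coxeter arrangement is a face of some alcove $w\Ac_0$'' — but this is immediate because the alcoves tile $\RR^d$ and $\widetilde{W}$ acts transitively on them, so any face is $w\Ac_0 \cap w'\Ac_0$ for adjacent $w\Ac_0, w'\Ac_0$, and adjacency of $w\Ac_0$ to $w'\Ac_0$ forces $w' = ws_i$. I would write the proof in two short paragraphs along exactly these lines, emphasizing that the label is intrinsic to the unordered pair of adjacent alcoves and that the action of $\widetilde{W}$ permutes such pairs compatibly with left multiplication, so labels are preserved.
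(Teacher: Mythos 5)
Your proof is correct and the invariance step is exactly the paper's argument: left multiplication by $u\in\widetilde{W}$ sends the face between $w\Ac_0$ and $ws_i\Ac_0$ to the face between $uw\Ac_0$ and $uws_i\Ac_0$, which carries the same label. Your additional well-definedness check (that the label is intrinsic to the unordered pair of adjacent alcoves, via $s_i^2=1$) is a point the paper leaves implicit, and it is a worthwhile inclusion.
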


\begin{proof}
    Consider any $u \in \widetilde{W}$. Then the face between alcoves $uw\cdot \Ac_0$ and $uws_i\cdot\Ac_0$ is also labeled with $i$. Thus $u$ sends faces labeled with $i$ to faces labeled with $i$, as required. 
\end{proof}

For $\alpha \in \Phi$, its \dfn{coroot} is $\alpha^\vee := \frac{2}{\langle \alpha, \alpha \rangle} \alpha$. Note $H_\alpha^k = H_\alpha^0 + k\alpha^\vee/2$. Hence, reflecting about $H_\alpha^0$ and then $H_\alpha^1$ shows translation by $\alpha$ is in an element of the affine Weyl group. In fact
$$Q^\vee := \mathrm{span}_\ZZ \{\alpha^\vee : \alpha \in \Phi\}\subset \widetilde{W}$$
is a lattice called the \dfn{coroot lattice}, and $\widetilde{W} = W \ltimes Q^\vee$. Geometrically this algebraic fact means the orbit of $\Ac_0$ under $W$ forms a polytope whose $Q^\vee$-translates tile $\RR^d$ (see \cref{fig: DXn picture}).

\subsection{Probability Theory} \label{sec: probability}

A \dfn{probability space} consists of a sample space \(\Omega\), a $\sigma$-algebra \(\mathcal{F}\), and a measure \(\mathbb{P}\). Here $\Omega = (\Omega, \delta)$ will always be a Polish space; either $\RR$, $\RR^d$ or $C([0, 1], \RR^d)$. 

\begin{definition}
    A random variable $X$ in $\RR^d$ follows a \dfn{normal distribution} \(\mathcal{N}(\mu, \Sigma)\) with mean \(\mu \in \RR^d\) and positive definite covariance \(\Sigma \in \mat_{d\times d}(\RR)\) if
    \[
    \mathbb{P}(X \in A) = \int_A \frac{1}{(2\pi)^{d/2} \sqrt{\det \Sigma}} \exp\left(-\frac{1}{2} (x - \mu)^\top \Sigma^{-1} (x - \mu)\right) \, \mathrm{d}x
    \]
    for every Borel set \(A \subseteq \RR^d\).
\end{definition}

A Brownian motion is a sequence of normal random variables in $C([0, 1], \RR^d)$.

\begin{definition} $(B_t)_{t\in[0, 1]}$ in $\RR^d$ is a \dfn {Brownian motion} with covariance $\Sigma\in \mat_{d\times d}(\RR)$ if:
\begin{enumerate}
    \item For all $0 \leq s < t\leq 1$, $B_t - B_s\sim \mathcal{N}(0, (t-s)\Sigma)$. 
    \item For all $0 =  t_0 < t_1 < \dots < t_k\leq 1$, the increments $$B_{t_1} - B_{t_0}, \quad B_{t_2} - B_{t_1}, \quad \dots, \quad B_{t_k} - B_{t_{k-1}}$$ 
    are independent.
    \item The sample path $t \mapsto B_t$ is almost surely continuous.
\end{enumerate}
\end{definition}

A sequence $(X_n)_{n\geq 0}$ \dfn{converges in probability} to \(X\), written $X_n \xrightarrow{P} X$, if for every $\varepsilon>0$
$$\lim_{n\to\infty} \mathbb{P}\big(\delta(X_n,X)>\varepsilon\big) = 0.$$
It \dfn{converges in distribution} to $X$, written $X_n \xrightarrow[]{\mathcal{D}} X$, if for every continuous \(\varphi:S\to\mathbb{R}\)
$$\lim_{n\to\infty} \mathbb{E}[\varphi(X_n)] = \mathbb{E}[\varphi(X)].$$ 
This is weaker than convergence in probability. We repeatedly use the following result.

\begin{lemma}[Slutsky's lemma] \label{lmm: Slutsky}
    Let $(X_n)_{n\geq 0}$ and $(Y_n)_{n\geq 0}$ be random variables on a Polish space with metric $d$. Suppose $X_n\xrightarrow[]{\mathcal{D}} X$ and $\delta(X_n, Y_n) \xrightarrow[]{P} 0$. Then $X_n+Y_n \xrightarrow[]{\mathcal{D}} X$. 
\end{lemma}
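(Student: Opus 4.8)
The plan is to reduce convergence in distribution to a statement about expectations of bounded Lipschitz test functions, and then exploit the uniform continuity of such functions to absorb the perturbation $Y_n$. Recall (portmanteau) that on a Polish space $X_n \xrightarrow{\mathcal D} X$ is equivalent to $\EE[\varphi(X_n)] \to \EE[\varphi(X)]$ for every bounded Lipschitz $\varphi$; I would either cite this or, if we want to be self-contained, note that the displayed definition already quantifies over all bounded continuous $\varphi$, and a bounded continuous function on a Polish space can be uniformly approximated on compact sets, with tightness of $(X_n)$ (a consequence of $X_n \xrightarrow{\mathcal D} X$ via Prokhorov) controlling the tails. So fix a bounded Lipschitz $\varphi \colon \Omega \to \RR$ with Lipschitz constant $L$ and sup-norm $M$, and the goal is $\EE[\varphi(X_n + Y_n)] \to \EE[\varphi(X)]$. (Here I am writing $X_n+Y_n$ assuming $\Omega$ is a normed space, which the excerpt's running assumption $\Omega \in \{\RR, \RR^d, C([0,1],\RR^d)\}$ guarantees; the metric is $\delta(u,v) = \|u-v\|$.)

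The main step is the triangle inequality split
\begin{equation*}
    \bigl| \EE[\varphi(X_n+Y_n)] - \EE[\varphi(X)] \bigr| \leq \bigl| \EE[\varphi(X_n+Y_n)] - \EE[\varphi(X_n)] \bigr| + \bigl| \EE[\varphi(X_n)] - \EE[\varphi(X)] \bigr|.
\end{equation*}
The second term tends to $0$ by hypothesis $X_n \xrightarrow{\mathcal D} X$. For the first term, fix $\eps > 0$ and bound
\begin{equation*}
    \bigl| \EE[\varphi(X_n+Y_n)] - \EE[\varphi(X_n)] \bigr| \leq \EE\bigl[ |\varphi(X_n+Y_n) - \varphi(X_n)| \bigr] \leq L\,\eps + 2M\,\Prob\bigl( \delta(X_n,Y_n) > \eps \bigr),
\end{equation*}
splitting the expectation according to whether $\delta(X_n, Y_n) = \|Y_n\| \le \eps$ or not, and using the Lipschitz bound on the first event and the crude bound $2M$ on the second. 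By the hypothesis $\delta(X_n, Y_n) \xrightarrow{P} 0$, the probability term vanishes as $n \to \infty$, so $\limsup_n | \EE[\varphi(X_n+Y_n)] - \EE[\varphi(X_n)] | \le L\eps$; letting $\eps \downarrow 0$ kills it. Combining the two terms gives $\EE[\varphi(X_n+Y_n)] \to \EE[\varphi(X)]$ for all bounded Lipschitz $\varphi$, hence $X_n + Y_n \xrightarrow{\mathcal D} X$.

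I do not expect any genuine obstacle here; this is a textbook argument. The only point requiring a little care is the passage from "bounded continuous" (as in the excerpt's definition of $\xrightarrow{\mathcal D}$) to "bounded Lipschitz" test functions — i.e. justifying that it suffices to test against Lipschitz functions. If we prefer to avoid invoking the portmanteau theorem or Prokhorov, an alternative is to keep $\varphi$ bounded continuous throughout and replace the uniform-continuity step with a compactness argument: given $\eps$, choose a compact $K$ with $\Prob(X \in K) > 1-\eps$ and, by tightness of the convergent sequence, also $\Prob(X_n \in K) > 1 - \eps$ for large $n$; then $\varphi$ is uniformly continuous on a bounded neighborhood of $K$, and the same split goes through with an extra additive $O(\eps)$ from the event $\{X_n \notin K\}$. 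Either route is routine, so the "hard part" is really just bookkeeping the $\eps$'s.
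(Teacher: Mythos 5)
The paper does not prove this lemma at all: it is stated as a standard fact (the ``converging together'' form of Slutsky) and simply invoked later, so there is no in-paper argument to compare against. Your bounded-Lipschitz/portmanteau proof is the standard one and is essentially correct: the split
\[
\bigl|\EE[\varphi(\cdot)]-\EE[\varphi(X_n)]\bigr|\le L\eps+2M\,\Prob(\text{perturbation}>\eps)
\]
together with the reduction from bounded continuous to bounded Lipschitz test functions (or the tightness/uniform-continuity-on-compacts route you sketch) is exactly what a textbook would do.

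The one point to flag is not in your argument but in the statement you were asked to prove, and it does leak into your write-up: you assert $\delta(X_n,Y_n)=\|Y_n\|$, which is false as written since $\delta$ is the metric, so $\delta(X_n,Y_n)=\|X_n-Y_n\|$. The lemma as stated in the paper conflates two versions: the hypothesis $\delta(X_n,Y_n)\xrightarrow{P}0$ pairs naturally with the conclusion $Y_n\xrightarrow{\mathcal D}X$ (which is in fact how the lemma is used in \cref{lmm: X to L} and in \cref{sec: martingale approximation}, where $Y_n$ is a uniformly close trajectory, not a vanishing summand), whereas the conclusion $X_n+Y_n\xrightarrow{\mathcal D}X$ requires the hypothesis $\|Y_n\|\xrightarrow{P}0$. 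Your Lipschitz split proves either corrected version verbatim---for the first, bound $|\varphi(Y_n)-\varphi(X_n)|\le L\,\delta(X_n,Y_n)$ on the good event; for the second, bound $|\varphi(X_n+Y_n)-\varphi(X_n)|\le L\|Y_n\|$---so the proof is fine once you state which version you are proving rather than equating the two hypotheses.
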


A \dfn{filtration} \((\mathcal{F}_t)_{t \geq 0}\) is a family of $\sigma$-algebras such that \(\mathcal{F}_s \subseteq \mathcal{F}_t\) for \(s \leq t\). A process \((X_t)_{t \geq 0}\) is \dfn{adapted} to \((\mathcal{F}_t)_{t \geq 0}\) if each \(X_t\) is \(\mathcal{F}_t\)-measurable. It is a \dfn{martingale} if $\mathbb{E}[X_t \mid \mathcal{F}_s] = X_s$ for all $s\leq t$, and \(\mathbb{E}[\delta(X_t, 0)] < \infty\). Note Brownian motion is a martingale.

\begin{remark} \label{rmk: convexity}
    Throughout, we establish bounds of the form $\|\EE[X\mid \Fc]\|\leq C$. By the tower property and Jensen's inequality, this automatically holds for any sub-$\sigma$-algebra $\Gc\subseteq \Fc$:
    $$\|\EE[X\mid \Gc]\| = \|\EE[\EE[X\mid \Fc]\mid \Gc]\| \leq \EE[\|\EE[X\mid \Fc]\|\mid \Gc]\leq C.$$
\end{remark}

\subsection{Discretizing the Random Billiard Walk} \label{sec: discrete walk}

Let $X_n\in\RR^d$ record the \emph{centroid} of the alcove the laser lies in after the $n$-th intersection, as shown in \cref{fig: numbering}. We assume $X_0\in \Ac_0$. Then, by previous discussions, $X_n$ lies in the random alcove $w_n\Ac_0 := s_{i_1}^{\eps_1}\cdots s_{i_{n}}^{\eps_{n}}\Ac_0$, where
$$\eps_n := \begin{cases}
    0 & \text{the walk reflects the $n$-th time it hits a hyperplane}\\
    1 & \text{otherwise}
\end{cases} \sim \text{Ber}(1-p)
$$
are i.i.d.\ Bernoulli random variables. Let $\Fc_n := \sigma(X_0, \dots, X_n)$ be the natural filtration.

\begin{figure}[H]
    \centering
    \includegraphics[width=0.9\linewidth]{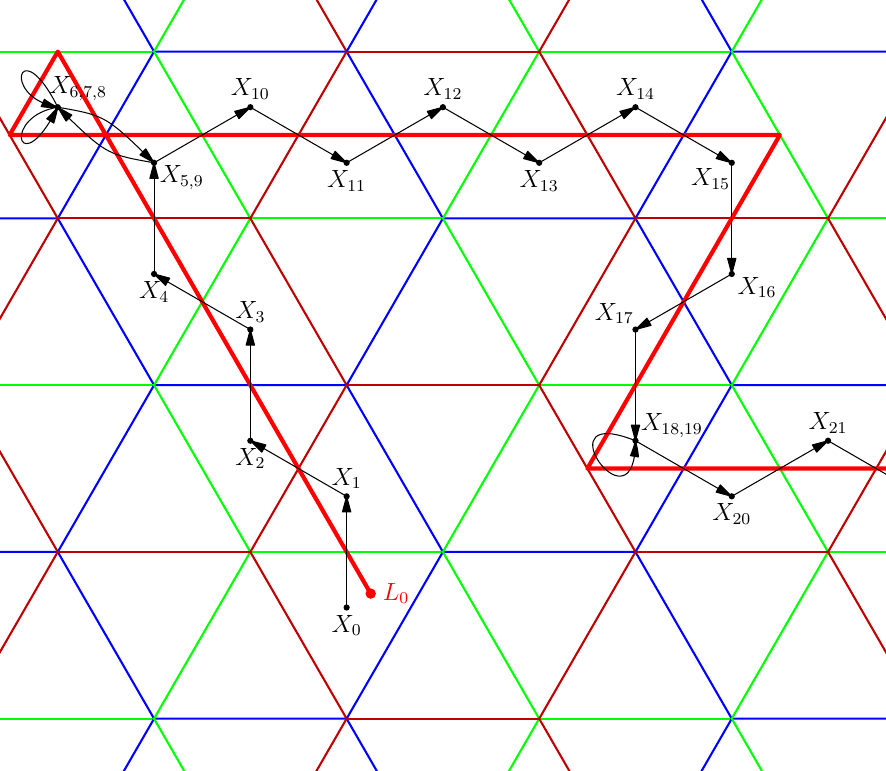}
    \caption{The coloring of the alcove faces is invariant under all reflections in $\widetilde{A}_2$. This rational direction has cutting sequence period: \textcolor{green}{green}, \textcolor{red}{red}, \textcolor{blue}{blue}.}
    \label{fig: numbering}
\end{figure}

We now argue that it is enough to study $(X_n)_{n\geq 0}$. As in \eqref{eq: rescaled trajectory} we define a continuous rescaled trajectory $X^{(n)}$, by first linearly interpolating. This can be directly compared to $L^{(n)}$. 

\begin{prop} \label{lmm: X to L}
    If $X^{(n)}$ converges to nondegenerate Brownian motion, then so does $L^{(n)}$. 
\end{prop}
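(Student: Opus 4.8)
The plan is to show that the processes $X^{(n)}$ and $L^{(n)}$ are uniformly close, so that their difference tends to zero in the sup-norm in probability; then \cref{lmm: Slutsky} finishes the job. Concretely, fix a trajectory of the laser. Between the $k$-th and $(k+1)$-st hyperplane crossings the laser travels in a straight line at unit speed inside (the closure of) the alcove $w_k\Ac_0$, whose centroid is $X_k$. Since all alcoves are congruent to the fundamental alcove $\Ac_0$, there is a constant $R = R_{\widetilde W} := \operatorname{diam}(\Ac_0)$ such that the laser position $L$ during this time interval stays within distance $R$ of $X_k$. Thus, if $N(t)$ denotes the (random) number of hyperplanes the laser has crossed by continuous time $t$, we have $\|L_t - X_{N(t)}\| \le R$ for all $t$.

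Next I would control the clock change $t \mapsto N(t)$. The laser moves at unit speed, and consecutive crossing times are separated by the chord lengths of successive alcoves; these chord lengths are bounded above by $R$ but could in principle be small, so $N(t)$ could a priori run faster than $t$. However, for the comparison of the \emph{rescaled} processes we only need a lower bound of the right order on inter-crossing times on the relevant scale, together with the fact that $X^{(n)}$ is (by hypothesis) tight with continuous limit. The cleanest route: observe that $X^{(n)}$ converging to a nondegenerate Brownian motion already forces, via tightness and the modulus-of-continuity estimates inherent in that convergence, that $\sup_{|s-t|\le \delta}\|X^{(n)}(s)-X^{(n)}(t)\|$ is small with high probability for small $\delta$. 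Combined with $\|L_t - X_{N(t)}\| \le R$ and the deterministic bound $|N(t)/t - 1| \to 0$ along the trajectory (more precisely, $N(nt)$ and $nt$ differ by at most a $o(n)$ amount uniformly in $t\in[0,1]$, since average chord length is pinned by the geometry and the cutting sequence does not degenerate — this is exactly the content of the growth bounds of \cref{sec: walk grows}), one gets
\[
\sup_{t\in[0,1]} \bigl\| L^{(n)}(t) - X^{(n)}(t) \bigr\|
\le \frac{R}{\sqrt n} + \sup_{|s-t|\le \eta_n} \bigl\| X^{(n)}(s) - X^{(n)}(t) \bigr\| + o(1),
\]
where $\eta_n \to 0$; the right-hand side tends to $0$ in probability. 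Hence $\delta_\infty(L^{(n)}, X^{(n)}) \xrightarrow{P} 0$, and by Slutsky $L^{(n)}$ has the same distributional limit as $X^{(n)}$, namely the nondegenerate Brownian motion. Nondegeneracy is inherited because the limit is literally the same.

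The main obstacle is making the reparametrization rigorous: one must rule out that $N(t)$ accelerates away from $t$ on a macroscopic scale, i.e. that the laser crosses a super-linear number of hyperplanes in linear time. This is equivalent to a uniform lower bound on the \emph{time-averaged} chord length along the cutting sequence, which is why the argument must lean on the ergodic/regularity statements for the sequence $I$ developed later in the paper (\cref{sec: walk grows}, \cref{sec: ergodic}); one cannot get it from congruence of alcoves alone, since individual chords can be arbitrarily short. A secondary subtlety is that $N(t)$ and the interpolation grid used to define $X^{(n)}$ do not match exactly, so one also needs the modulus-of-continuity control on $X^{(n)}$ — available for free once $X^{(n)}$ is known to converge to a continuous process — to absorb the discrepancy. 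I would organize the proof so that these two ingredients are quoted as black boxes, keeping the argument here short.
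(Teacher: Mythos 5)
There is a genuine gap: you implicitly assume the laser crosses hyperplanes at asymptotic rate $1$, i.e.\ that $N(nt)$ and $nt$ differ by $o(n)$. This is false in general. The crossing count satisfies $N(t) = k_b t + O(1)$ where $k_b = \frac14\sum_{\alpha\in\Phi}|b^\top\alpha^\vee|$ need not equal $1$, so $L^{(n)}(t) = n^{-1/2}L_{nt}$ sits near $n^{-1/2}X_{k_b nt}$, not near $X^{(n)}(t) = n^{-1/2}X_{nt}$; the sup-norm difference you claim tends to $0$ is in fact of order $\sqrt{|k_b-1|}$ and does not vanish. The correct comparison is between $L^{(n)}$ and $\sqrt{k_b}\,X^{(k_b n)}$, and the limit of $L^{(n)}$ is then a Brownian motion with covariance $k_b$ times that of the limit of $X^{(n)}$ --- not ``literally the same'' process (the proposition's conclusion survives because nondegeneracy is preserved under this constant rescaling, but your identification of the limits is wrong).

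A second, related problem is that your proposed machinery for controlling $N(t)$ is both the wrong tool and unnecessary. The key observation (which the paper uses) is that the Coxeter arrangement is invariant under every reflection the laser performs, so the sequence of crossing \emph{times} is deterministic and identical to that of the unreflected ray $L_0+tb$; for each root $\alpha$ the ray meets the parallel family $\{H_\alpha^k\}_{k\in\ZZ}$ exactly $t|b^\top\alpha^\vee| + O(1)$ times, with the $O(1)$ uniform in $t$. Hence $N(t) = k_b t + O(1)$ deterministically --- individual chords can be short, but the count is pinned by elementary lattice-point counting, with no appeal to \cref{sec: walk grows} (which concerns the spatial growth of $\EE\|X_n-X_0\|^2$, not the crossing rate) or to \cref{sec: ergodic}. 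With this uniform $O(1)$ index mismatch, $\|L_t - X_{\lfloor k_b t\rfloor}\| = O(1)$ pointwise, so $\|L^{(n)} - \sqrt{k_b}X^{(k_b n)}\|_\infty = O(n^{-1/2})$ deterministically and the modulus-of-continuity argument is superfluous; Slutsky then concludes.
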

\begin{proof}
    Since the $\Hc$ is invariant under reflections, the number of hyperplanes hit in the first $t$ seconds is independent of reflections. If the laser had never reflected, it would at $L_t = L_0+tb$, intersecting
    $t|b^\top \alpha^\vee | + O(1)$ hyperplanes perpendicular to $\alpha$. Thus, if we define
    $$k_b:= \frac{1}{4}\sum_{\alpha \in \Phi} |b^\top \alpha^\vee| >0,$$
    after $t$ seconds the laser has hit $k_bt + O(1)$ hyperplanes. In particular, $L_t = X_{\lfloor k_bt\rfloor} + O(1)$. 
    Therefore, we can bound the difference between $L^{(n)}$ and the $n$-th linear interpolation as
    $$\left\|L^{(n)} - \sqrt{k_b}X^{(k_bn)}\right\|_\infty = \sup_{t\in [0, 1]}\left\|\frac{1}{\sqrt{n}}L_{tn} - \frac{1}{\sqrt{n}}X_{tk_bn}\right\|= O(n^{-1/2}) \to 0.$$
    The conclusion follows from \cref{lmm: Slutsky}.
\end{proof}

\section{Direction Mixes Exponentially Quickly} \label{sec: mixing}

For now, we forget about the affine displacement of $(X_n)_{n\geq 0}$, and zoom in on its direction. Define $\Delta X_n := X_{n+1} - X_n$, the step taken at time $n$. At time $n$ the laser hits a face labeled with $i_n$. Since the labeling is $Q^\vee$-invariant, the direction of $\Delta X_n$ will only depend on the image of $w_{n-1}\Ac_0$ under the projection $\pi: \widetilde{W}\surjto \widetilde{W}/Q^\vee = W$. This is illustrated in \cref{fig: DXn picture}. 

\begin{figure}[H]
    \centering
    \includegraphics[width=0.9\linewidth]{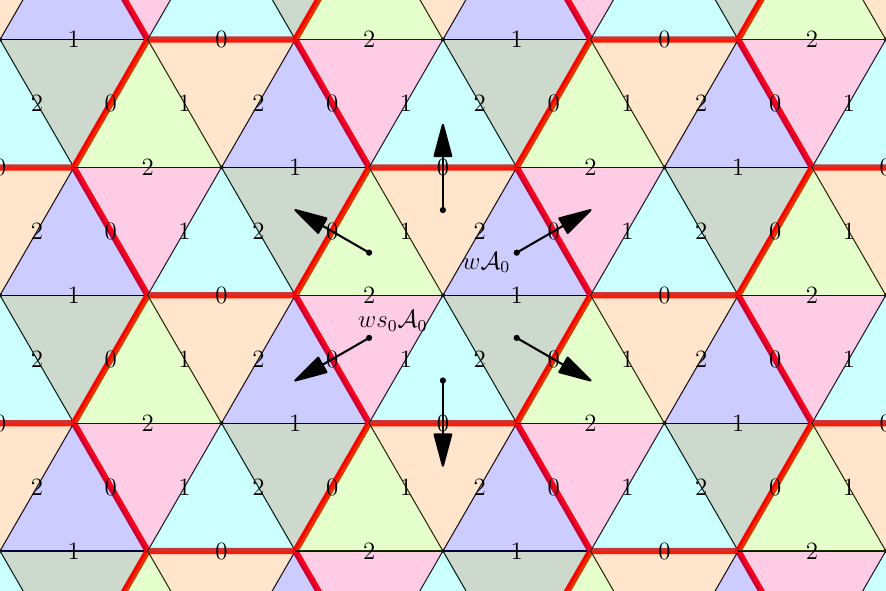}
    \caption{In $W = \widetilde{A}_2$, if we know $i_n = 0$, there are 6 possibilities for $\Delta X_n$, that depends on the class of $w_{n-1}$ modulo $Q^\vee$.}
    \label{fig: DXn picture}
\end{figure}

For each $0\leq i \leq d$ let $\beta_i$ denote the vector connecting the center of $\Ac_0$ to that of $s_i \Ac_0$ (in particular, it is parallel to $\alpha_i$). The discussion above shows
\begin{equation}\label{prop: DXn}
    \Delta X_n = \eps_n \rho( \pi(w_{n-1})) \beta_{i_n}.
\end{equation}
Note $\pi(w_n)= \pi(s_{i_1})^{\eps_1}\cdots \pi(s_{i_n})^{\eps_n}$ is a Markov chain on $W$. We claim it mixes quickly. 

\begin{prop}\label{lmm: quick mixing}
    There exists $c\in (0, 1)$ such that, for all $n, n_0 \geq 0$ and $w\in W$ we have  $$\left|\Prob\left[\pi(w_{n+n_0}) = w \mid \Fc_{n_0}\right] - \frac{1}{|W|}\right| \leq c^{n}.$$
    Moreover, $c = c_p$ is independent of $b$ and $L_0$, and is continuous in $p$. 
\end{prop}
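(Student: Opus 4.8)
The plan is to show that the Markov chain $Y_n := \pi(w_n)$ on the finite group $W$ is driven by i.i.d.\ "update" steps and that, regardless of the (deterministic) cutting sequence $I$, the transition rule has a uniform lower bound on the probability of reaching any element after sufficiently many steps. Concretely, at time $n$ the chain either stays put (with probability $p$, when $\eps_n = 0$) or right-multiplies by $\pi(s_{i_n})$ (with probability $1-p$). Since $\widetilde W$ is irreducible and every alcove face gets used, the key structural input is that the reflections $\{s_0,\dots,s_d\}$ generate $W$ as a group — more precisely, that the full set $\{\pi(s_0),\dots,\pi(s_d)\}$ of simple reflections generates $W$. So even though the cutting sequence only presents one generator at a time, over a long enough window the sequence of available generators (together with the option to skip) lets the chain move freely. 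First I would fix the finite diameter $D := \mathrm{diam}(\text{Cayley graph of } W)$ with respect to the generating set $\{\pi(s_i)\}$, and note $D \le |W|$.

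Next I would prove a \emph{one-step-of-length-$N$} minorization: there exists $N = N(\widetilde W)$ and $\delta = \delta_p > 0$ such that for every starting element $u \in W$, every deterministic admissible label block $(i_{m+1},\dots,i_{m+N})$, and every target $w \in W$,
\[
\Prob\!\left[Y_{m+N} = w \mid Y_m = u,\ \Fc_m\right] \ \ge\ \delta.
\]
The point is that the deterministic label sequence $I$, being the cutting sequence of an actual line, must "use" each color infinitely often — in fact any window of bounded length $N$ contains enough variety of colors to realize any group element of length $\le D$. (If some color were used only finitely often, the line would eventually be confined to a lower-dimensional sub-arrangement, contradicting irreducibility; one gets a uniform such $N$ because the cutting-sequence combinatorics of a line in the Coxeter arrangement are controlled.) Given such a window, one explicit way to reach any $w$ from $u$: choose a reduced word for $u^{-1}w$ in the $\pi(s_i)$'s, and "read it off" the label block by taking $\eps_n = 1$ exactly at the positions matching the letters of the word in order and $\eps_n = 0$ elsewhere; this event has probability at least $p^{N-D}(1-p)^{D} > 0$, giving $\delta = \delta_p$ that is clearly continuous in $p$ on $(0,1)$ and independent of $b, L_0$.

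Then the standard Doeblin/coupling argument converts this uniform minorization into geometric ergodicity: from $\Prob[Y_{m+N} \in \cdot \mid \Fc_m] \ge \delta\,\mu$ where $\mu$ is the uniform measure on $W$ (note the minorizing measure can be taken uniform since the bound $\delta$ holds for \emph{every} target $w$), we get that the conditional law of $Y_{m+N}$ is a mixture $(1-\delta)(\text{something}) + \delta\,\mathrm{Unif}(W)$, and iterating over $\lfloor n/N\rfloor$ disjoint blocks yields
\[
\left|\Prob\!\left[\pi(w_{n+n_0}) = w \mid \Fc_{n_0}\right] - \tfrac{1}{|W|}\right| \ \le\ (1-\delta)^{\lfloor n/N\rfloor} \ \le\ c^{n}
\]
with $c := (1-\delta)^{1/(2N)} \in (0,1)$, say, absorbing constants. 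Continuity of $c$ in $p$ follows from continuity of $\delta_p$; independence from $b$ and $L_0$ is built in because the minorization held uniformly over all admissible cutting sequences and starting alcoves. I would also remark that the uniform distribution is genuinely the stationary one here because the chain's dynamics are symmetric enough — but for the stated bound I only need the minorization, not a separate stationarity computation, since $\mathrm{Unif}(W)$ is visibly fixed by the mixture decomposition.

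\textbf{Anticipated main obstacle.} The delicate point is the uniform window length $N$: I need that \emph{every} length-$N$ block of the cutting sequence $I$ contains, as a subsequence, a reduced word for every group element — equivalently, that the colors appearing in any $N$ consecutive crossings generate $W$ and do so "in the right order" to spell out words of length up to $D$. Proving a clean uniform bound on $N$ that works simultaneously for all directions $b$ is the crux; a robust way around delicate combinatorics is to instead argue that it is enough for each \emph{individual} color $0,\dots,d$ to appear within every window of some length $N_0$, because once all $d+1$ colors are available in order we can realize any reduced word by a greedy matching (padding with skips), at the cost of enlarging $N$ to roughly $D \cdot N_0$. Bounding $N_0$ — the maximal gap between consecutive occurrences of a fixed color in the cutting sequence of a line — uniformly over $b$ is then the real technical lemma, and I expect it follows from the fact that consecutive hyperplanes of a fixed parallelism class are crossed at a bounded rate, so no color can be absent for too long without the line leaving a bounded neighborhood of a proper affine subspace, contradicting that it meets hyperplanes of all classes (irreducibility of $\widetilde W$). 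I would isolate this as a standalone geometric claim before assembling the probabilistic argument above.
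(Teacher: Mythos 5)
Your proposal is correct in outline but takes a genuinely different route from the paper. You prove a Doeblin minorization: over a window of length $N$, the chain $\pi(w_n)$ can reach any target element from any starting element with probability at least $\delta_p>0$, uniformly over admissible label blocks, and then you run the standard coupling/mixture argument to get total-variation contraction at rate $(1-\delta|W|)$ per block. The paper instead works in the group ring $\RR[W]$ with the $\ell_2$ norm: each one-step operator $T_i$ (right multiplication by $p+(1-p)\pi(s_i)$) is self-adjoint with eigenvalues $1$ and $2p-1$, the common fixed space of all the $T_i$ is $\RR\cdot\1$, and a compactness argument shows that any product over a window containing every color strictly contracts $\1^\perp$. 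Both proofs hinge on the same geometric lemma — a uniform bound $M$ on the gap between consecutive occurrences of each color in the cutting sequence, which the paper gets from the tiling by the polytopes $\mathcal P_i$ — and you correctly identify this as the crux; your sketch of it is at roughly the same level of detail as the paper's. The trade-off: the paper's spectral argument only needs each color to \emph{appear} in the window (in any order) and avoids word combinatorics entirely, whereas your greedy word-spelling needs the longer window $N\approx D\cdot N_0$ and an explicit path construction; in exchange your method is more elementary and yields an explicit, interpretable constant $\delta_p$.

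One small repair: your parenthetical claim that ``$\mathrm{Unif}(W)$ is visibly fixed by the mixture decomposition'' is not right as stated — the minorization $K\ge \delta|W|\,\mathrm{Unif}$ gives $K=\delta|W|\,\mathrm{Unif}+(1-\delta|W|)R$, which fixes the uniform measure only if the residual kernel $R$ does. You do need the separate (but trivial) observation that each one-step kernel $p\,\delta_u+(1-p)\,\delta_{u\pi(s_{i_n})}$ is doubly stochastic (a convex combination of permutation matrices), so the uniform distribution is stationary for the time-inhomogeneous chain; combined with the Doeblin contraction between any two initial laws, this is what identifies the limit as $1/|W|$. With that fixed, and with the geometric window lemma supplied, your argument goes through, including the bookkeeping that converts $(1-\delta|W|)^{\lfloor n/N\rfloor}$ into a clean $c^n$ valid for all $n\ge 0$.
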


Since $I$ determines the process, it is inherently time-dependent. Thus, while our analysis borrows standard ideas for random walks on finite groups, it requires ad-hoc arguments.

\subsection{Proof of \texorpdfstring{\cref{lmm: quick mixing}}{quick mixing}}

The evolution of $(\pi(w_n))_{n\geq 0}$ is most naturally written in the group ring $\RR[W]$, representing the distribution of a random variable $X$ on $W$ as
$$P = \sum_{w\in W} \Prob[X = w]\cdot w.$$
Conditioning gives a distribution $P_{n_0}\in \RR[W]$ at time $n_0$. We have
the simple recurrence
$$P_{m} = P_{m-1}\cdot (p + (1-p)\pi(s_{i_{m}})).$$
Let $\1\in \RR[W]$ represent the uniform distribution on $W$, and $\|\cdot\|_W$ the $\ell_2$ norm on $\RR[W]$ with respect to the standard basis. We prove $\|P_{n+n_0}-\1\|_W\to 0$ exponentially quickly in $n$. For $0\leq i \leq d$, right multiplication by $p + (1-p) \pi(s_i)$ defines a linear map $T_i$ on $\RR[W]$. We show each is \textit{weakly} a contraction towards $\1$, and that together they contract everything. 

\begin{lemma} \label{lmm: full contraction}
    For each \( i \), there is an orthogonal decomposition $\RR[W] = F_i \oplus C_i$ such that \( T_i \) acts as the identity on \( F_i \), and contracts every element of \( C_i \) by a factor of \( |2p - 1| < 1 \).

    Moreover, the common fixed space $\bigcap_{i = 0}^r F_i$ is spanned by the uniform distribution.
\end{lemma}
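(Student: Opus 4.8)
The plan is to diagonalize each $T_i$ explicitly using the fact that $s_i$ is an involution, then analyze the intersection of the fixed spaces using connectivity of the Coxeter graph. First I would fix $i$ and observe that right multiplication $R_i$ by $\pi(s_i)$ on $\RR[W]$ is an orthogonal involution (it permutes the standard basis $\{w\}_{w\in W}$ by $w\mapsto w s_i$, a fixed-point-free involution since $s_i\neq e$). Hence $\RR[W]$ splits orthogonally into the $(+1)$-eigenspace $F_i = \ker(R_i - \mathrm{id})$ and the $(-1)$-eigenspace $C_i = \ker(R_i + \mathrm{id})$. Since $T_i = p\cdot\mathrm{id} + (1-p)R_i$, on $F_i$ it acts as $p + (1-p) = 1$, i.e. the identity, and on $C_i$ it acts as $p - (1-p) = 2p-1$, a scalar of absolute value $|2p-1| < 1$ because $p\in(0,1)$. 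This gives the orthogonal decomposition and the contraction factor claimed in the first sentence; note $F_i$ is spanned by the indicator vectors of the right cosets $\langle s_i\rangle w$, so it consists exactly of those $\sum_w a_w w$ with $a_w = a_{s_i w}$... wait, with $a_w = a_{w s_i}$ (functions on $W$ constant on left cosets of $\langle s_i\rangle$), and $C_i$ is its orthogonal complement.

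Next I would identify $\bigcap_i F_i$. An element $v = \sum_w a_w w$ lies in this intersection iff the function $w\mapsto a_w$ is invariant under right multiplication by every generator $s_0,\dots,s_d$. Since $s_0,\dots,s_d$ generate $W$, right-invariance under all of them forces $a_w = a_{w g}$ for all $g\in W$; taking $g = w^{-1} w'$ shows $a_w = a_{w'}$ for all $w,w'\in W$, so $v$ is a constant multiple of $\sum_w w$, i.e. proportional to the uniform distribution $\1$. (Equivalently: the graph on vertex set $W$ with an edge $\{w, ws_i\}$ for each $i$ is connected because the $s_i$ generate, and a function constant along all these edges is globally constant.) Conversely $\1$ is clearly fixed by every $T_i$, so $\bigcap_i F_i = \RR\cdot\1$.

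There is essentially no serious obstacle here — the argument is a short spectral computation — but the one point deserving care is that the decomposition is genuinely \emph{orthogonal} and uniform across $i$: I would make explicit that the standard basis of $\RR[W]$ is orthonormal for $\|\cdot\|_W$, that permutation operators are orthogonal, and that an orthogonal involution has orthogonal $\pm 1$-eigenspaces, so the splitting $F_i\oplus C_i$ is orthogonal with the \emph{same} constant $|2p-1|$ for every $i$. This uniformity is what lets the subsequent proof of \cref{lmm: quick mixing} combine the $T_i$'s — one should remark that while each $T_i$ individually only contracts $C_i$ and fixes $F_i$ (so a single step need not shrink $\|P_m - \1\|_W$ at all, since $P_{m-1}-\1$ could lie entirely in $F_{i_m}$), the fact that $\bigcap_i F_i = \RR\cdot\1$ means no nonzero vector orthogonal to $\1$ survives untouched by all of them, and the time-dependence of the instruction sequence $I$ is handled there rather than in this lemma.
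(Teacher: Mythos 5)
Your proposal is correct and follows essentially the same route as the paper: diagonalize the orthogonal involution given by right multiplication by $\pi(s_i)$ into its $\pm 1$-eigenspaces, read off that $T_i$ acts by $1$ and $2p-1$ respectively, and identify $\bigcap_i F_i$ with $\RR\cdot\1$ using that the $\pi(s_i)$ generate $W$. The extra remarks you add (fixed-point-freeness, the explicit description of $F_i$ via cosets of $\langle s_i\rangle$, and why uniformity of the contraction constant matters downstream) are accurate but not needed beyond what the paper's argument already contains.
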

\begin{proof}
    Since \( s_i^2 = 1 \), right multiplication by \( \pi(s_i) \) defines an involutive linear operator on \( \RR[W] \). Its matrix with respect to the standard basis \( \{ w : w \in W \} \) is symmetric: it permutes basis vectors in pairs, sending \( w \mapsto w\pi(s_i) \) and vice versa. Moreover, its eigenvalues lie in \( \{ \pm 1 \} \), so by the spectral theorem we have an orthogonal decomposition $\RR[W] = F_i \oplus C_i$
    where \( F_i \) and \( C_i \) are the $\pm 1$ eigenspaces of right multiplication by \( \pi(s_i) \), respectively. Then $T_i$ acts by 1 on $F_i$ and $p-(1-p) = (2p-1)$ on $C_i$. For the second part, suppose 
    $$P = \sum_{w \in W} p_w \cdot w$$ 
    is fixed by every \( T_i \). It follows that $p_{w \pi(s_i)} = p_w$ for all \( w \in W \) and $0\leq i \leq d$. Since \( W \) is generated by the \( \pi(s_i) \), this implies that $p_w$ is constant on $W$. Thus, $P \in \RR \cdot \1$.
\end{proof}

Since $|2p-1| <1$ and $F_i\perp C_i$, for every $P \in \RR[W]$ we have $\|T_iP\|_W\leq \|P\|_W$ with equality if and only if $P\in F_i$. Therefore, after applying each $T_i$ we get strict contraction. 

\begin{lemma}\label{prop: is contraction}
    Suppose the sequence $(\iota_1, \dots, \iota_m)$ contains each integer in $\{0, 1, \dots, d\}$ at least once. Then there exists $c_{\iota_1, \dots, \iota_m} \in (0, 1)$ such that
    $$\left\|T_{\iota_m}\cdots T_{\iota_1}P \right\|_W\leq c_{\iota_1, \dots, \iota_m} \cdot \left\|P\right\|_W$$
    for every $P$ with $(P, \1) = 0$. 
\end{lemma}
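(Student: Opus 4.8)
The plan is to treat the composition $S := T_{\iota_m}\cdots T_{\iota_1}$ as a single linear operator on $\RR[W]$ and bound its operator norm on the hyperplane $\1^{\perp} := \{P \in \RR[W] : (P,\1) = 0\}$. First I would record that each $T_i$ is self-adjoint — it is a convex combination of the identity and right multiplication by $\pi(s_i)$, whose matrix was observed to be symmetric in the proof of \cref{lmm: full contraction} — and that $T_i\1 = \1$ because $\1 \in F_i$. Consequently each $T_i$, and hence $S$, maps $\1^{\perp}$ into itself, so $S$ restricts to an endomorphism of the finite-dimensional Euclidean space $\1^{\perp}$. Writing $c$ for its operator norm there, \cref{lmm: full contraction} gives $\|T_iP\|_W \le \|P\|_W$ for every $i$ and every $P$, so $c \le 1$; the content of the lemma is precisely that $c < 1$.

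To prove $c<1$, suppose for contradiction that $c = 1$. Since the unit sphere of the finite-dimensional space $\1^{\perp}$ is compact, the norm $c$ is attained: there is a unit vector $P_0 \in \1^{\perp}$ with $\|SP_0\|_W = 1$. Set $P_j := T_{\iota_j}\cdots T_{\iota_1}P_0$, so that $1 = \|P_0\|_W \ge \|P_1\|_W \ge \cdots \ge \|P_m\|_W = \|SP_0\|_W = 1$, forcing every inequality to be an equality. Now I would induct on $j$: given $\|P_j\|_W = \|P_{j-1}\|_W$, the equality clause of \cref{lmm: full contraction} (namely $\|T_iP\|_W = \|P\|_W$ iff $P \in F_i$) forces $P_{j-1} \in F_{\iota_j}$, whence $P_j = T_{\iota_j}P_{j-1} = P_{j-1}$. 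Starting from $P_0$ this yields $P_0 = P_1 = \cdots = P_m$ together with $P_0 \in F_{\iota_j}$ for every $j = 1,\dots,m$. Because $(\iota_1,\dots,\iota_m)$ contains each element of $\{0,1,\dots,d\}$, we conclude $P_0 \in \bigcap_{i=0}^d F_i = \RR\cdot\1$ by the second part of \cref{lmm: full contraction}, contradicting $P_0 \in \1^{\perp}$ with $\|P_0\|_W = 1$. Hence $c < 1$, and taking $c_{\iota_1,\dots,\iota_m} := \tfrac12(1 + c) \in (0,1)$ gives $\|SP\|_W \le c\|P\|_W \le c_{\iota_1,\dots,\iota_m}\|P\|_W$ for all $P \in \1^{\perp}$, as required.

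The argument is soft — compactness plus the rigidity of the equality case in \cref{lmm: full contraction} — so I do not anticipate a genuine obstacle. The only points needing a line of justification are the preliminary ones: that $T_i$ is self-adjoint and fixes $\1$, so the operator norm may legitimately be measured on the invariant subspace $\1^{\perp}$, and that the inductive step correctly propagates membership in the $F_{\iota_j}$. Both are immediate from the spectral picture already set up.
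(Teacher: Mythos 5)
Your proof is correct and follows essentially the same route as the paper's: compactness of the unit sphere in $\1^{\perp}$ to see the bound is attained, then the equality case of \cref{lmm: full contraction} to force a putative norm-preserving vector into $\bigcap_i F_i = \RR\cdot\1$, a contradiction. The only cosmetic differences are that you phrase it as an operator-norm statement (adding the unneeded but harmless observation that $\1^{\perp}$ is $T_i$-invariant) and that you take $\tfrac12(1+c)$ at the end to guarantee the constant is strictly positive.
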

\begin{proof}
    Consider the sphere $\mathcal{S} := \{P \in \RR[W]: \|P\|_W = 1, (P, \1) = 0\}$. The function
    \begin{align*}
        \mathcal{S} &\to \RR \\
        P &\mapsto \|T_{\iota_m}\cdots T_{\iota_1}P\|_W
    \end{align*}
    is continuous.  Since $\mathcal{S}$ is compact, its supremum $c_{\iota_1, \dots, \iota_m}$ is attained. Therefore, it suffices to prove the uniform bound $\|T_{\iota_m}\cdots T_{\iota_1}P\| < 1$ for every $P\in \mathcal{S}$. We know
    $$\|T_{\iota_m}\cdots T_{\iota_1}P\|_W\leq \|T_{\iota_{m-1}}\cdots T_{\iota_1}P\|_W \leq \cdots \leq \|P\|_W = 1$$
    with equality if and only if $T_{\iota_i}\cdots T_{\iota_1}P = P$ for all $i$, i.e. $T_{\iota_j}P = P$ for all $j$. In this case $P$ must be invariant under every $T_i$, so $P\in \RR \cdot \1$ by \cref{lmm: full contraction}. Then $P = 0$, contradiction.
\end{proof}

\begin{remark}
    In general, right multiplication by a distribution $P =\sum_{w\in W} p_x$ is always a weak contraction towards $\1$ in the $\ell_1$ norm. Moreover, if we let $\Sigma := \{w : p_w > 0\}$ be its support, then it is a strict contraction if and only if $\Sigma$ generates $W$, and is not contained in any nontrivial coset of a normal subgroup of $W$.

    Thus, an alternative proof of \cref{prop: is contraction} is to note that the support of the distribution
    $$P = (p\cdot 1+ (1-p)\cdot s_{\iota_1})\cdots (p\cdot 1+ (1-p)\cdot s_{\iota_m})$$
    generates $W$ (since by assumption on $\iota_1, \dots, \iota_m$ it contains $s_0, \dots, s_d$), and also satisfies the second condition since it contains 1 (so the only coset of a normal subgroup it could be contained in is $W$). We include the original proof since it is more elementary and insightful.
\end{remark}
    
We claim there is $M$ such that among any $M$ indices $i_{n+1}$, \dots, $i_{n+M}$ every integer in $\{0, 1, \dots, d\}$ appears. This is a geometric fact: for each $0\leq i \leq d$, the faces labeled with $i$ delimit congruent copies of a polytope $\mathcal{P}_i$ that tiles $\RR^d$. Each of them contains a finite number of faces inside; let $M$ be larger than this quantity for every $i$. 

Using the constants from~\cref{prop: is contraction} set
$$\eps:= \max_{(\iota_1, \dots, \iota_M)} c_{\iota_1, \dots, \iota_M} < 1,$$
where the maximum is taken over all sequences containing each integer in $\{0, 1, \dots, d\}$. By definition, every $M$ steps the distribution contracts to $\1$ by a factor of at least $\eps$. Thus
$$\|P_{n+n_0} - \1\|_{\infty}\leq \|P_{n+n_0} - \1\|_W \leq \eps^{\left\lfloor \frac{n}{M}\right\rfloor}\|P_{n_0} - \1\|_W,$$
which gives exponential decay. Moreover, for every initial distribution $P_{n_0}$ we have
$$\|P_{n_0} - \1\|_W \leq \sqrt{1-1/|W|} < 1.$$ 
Also $\eps^{\left\lfloor \frac{n}{M}\right\rfloor}\leq \eps^{-M} (\eps^{1/M})^{n}$. Thus, there is some $c < 1$ for which the upper bound is $c^n$. Since $c_{\iota_1, \dots, \iota_M}$ depends continuously on $p$ and is independent of $b$, $L_0$, the same is true for $c$. 

\subsection{Geometric Consequences of Quick Mixing}

\cref{lmm: quick mixing} shows the walk quickly forgets previous information. We now use the symmetry of $\Hc$ to argue that, in expectation, each step doesn't move much. We use the norm $\|x\| = \sqrt{x^\top x}$.

\begin{lemma} \label{prop: independence}
    For any $n, n_0\geq 0$ we have
    $$\|\EE[\Delta X_{n+n_0}\mid\Fc_{n_0}] \| \leq (1-p)|W|c^{n}.$$
\end{lemma}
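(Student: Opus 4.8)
The plan is to condition on the direction $\pi(w_{n+n_0-1})$ at the moment of the $(n+n_0)$-th hit and exploit the symmetry of the Coxeter arrangement, which shows that averaging a single step over all directions in $W$ yields zero. By \eqref{prop: DXn} we have $\Delta X_{n+n_0} = \eps_{n+n_0}\,\rho(\pi(w_{n+n_0-1}))\,\beta_{i_{n+n_0}}$. The label $i_{n+n_0}$ is deterministic (it is a function of the cutting sequence $I$, which depends only on $b$ and $L_0$), so writing $i := i_{n+n_0}$ and using that $\eps_{n+n_0}\sim\mathrm{Ber}(1-p)$ is independent of $\Fc_{n_0}$ and of $\pi(w_{n+n_0-1})$, we get
\[
\EE[\Delta X_{n+n_0}\mid \Fc_{n_0}] = (1-p)\sum_{w\in W}\Prob\!\left[\pi(w_{n+n_0-1}) = w \mid \Fc_{n_0}\right]\rho(w)\beta_i.
\]

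Next I would compare this to the value obtained under the exactly uniform distribution. The key algebraic input is that $\sum_{w\in W}\rho(w)\beta_i = 0$: the operator $\frac{1}{|W|}\sum_{w\in W}\rho(w)$ is the projection onto the $W$-invariant subspace of $\RR^d$, which is trivial since $\widetilde{W}$ is irreducible (so $\rho$ is an irreducible, hence nontrivial, representation of $W$), whence $\sum_{w}\rho(w)\beta_i = 0$. Subtracting this vanishing quantity,
\[
\EE[\Delta X_{n+n_0}\mid \Fc_{n_0}] = (1-p)\sum_{w\in W}\left(\Prob\!\left[\pi(w_{n+n_0-1}) = w \mid \Fc_{n_0}\right] - \tfrac{1}{|W|}\right)\rho(w)\beta_i.
\]
Now apply the triangle inequality together with \cref{lmm: quick mixing}: each coefficient is bounded in absolute value by $c^{\,n-1}$ (the chain has run for $n-1$ steps past time $n_0$), and $\|\rho(w)\beta_i\| = \|\beta_i\|$ since $\rho(w)$ is orthogonal. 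This gives a bound of $(1-p)|W|\,c^{\,n-1}\|\beta_i\|$, and since all the $\beta_i$ have the same length $\|\beta_i\|\le 1$ (indeed by symmetry $\|\beta_i\|$ equals the alcove-diameter-type constant, which one can normalize so that $\|\beta_i\|\le 1$, or absorb into the constant), one gets the stated estimate — modulo bookkeeping of the off-by-one in the exponent, which is harmless since $c^{\,n-1} = c^{-1}c^n$ and the constant can be adjusted (or one simply notes the claim as stated follows after relabeling $n\mapsto n+1$ in the mixing bound, or uses \cref{rmk: convexity} to pass through $\Fc_{n+n_0-1}$).

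The main obstacle is not analytic but rather making sure the conditioning is handled cleanly: one must check that $\eps_{n+n_0}$ is genuinely independent of $\sigma(\Fc_{n_0}, \pi(w_{n+n_0-1}))$ — this holds because $\pi(w_{n+n_0-1})$ is a function of $\eps_1,\dots,\eps_{n+n_0-1}$ together with the deterministic data $(b, L_0)$, and the $\eps_j$ are i.i.d. — and that $\EE[\Delta X_{n+n_0}\mid \Fc_{n_0}] = \EE[\EE[\Delta X_{n+n_0}\mid \Fc_{n+n_0-1}]\mid \Fc_{n_0}]$ so that the single-step conditional expectation formula applies. The representation-theoretic fact $\sum_w \rho(w)\beta_i = 0$ is the only genuinely new ingredient, and it is immediate from irreducibility; everything else is \cref{lmm: quick mixing} plus the triangle inequality.
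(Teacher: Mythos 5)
Your proof is correct, and it reaches the same bound through the same two-step template as the paper — write $\EE[\Delta X_{n+n_0}\mid\Fc_{n_0}] = (1-p)\sum_{w}\Prob[\pi(w_{n+n_0-1})=w\mid\Fc_{n_0}]\,\rho(w)\beta_{i_{n+n_0}}$, then exploit a symmetry under which the "equilibrium" part cancels exactly and bound the deviation by \cref{lmm: quick mixing} — but the cancellation mechanism is different. The paper pairs $w$ with $ws_{i_{n+n_0}}$, using only that the displacements $w\Ac_0\to ws_i\Ac_0$ and $ws_i\Ac_0\to w\Ac_0$ are opposite; this is a purely local, geometric observation that needs no representation theory and would survive even if the limiting distribution were merely invariant under right multiplication by $s_{i_{n+n_0}}$ rather than fully uniform. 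You instead subtract the full uniform average and invoke $\sum_{w\in W}\rho(w)=0$, which follows from irreducibility (and nontriviality) of $\rho$ exactly as you say; this is the same Schur-type averaging the paper uses later in \cref{lmm: EAdU}, so your route is arguably more uniform with the rest of the paper, at the cost of importing irreducibility where the paper's pairing does not need it. The two residual bookkeeping points you flag are genuinely harmless and are in fact glossed over in the paper's own proof as well: the exponent should strictly be $c^{n-1}$ rather than $c^n$ (the chain state being conditioned is $\pi(w_{n+n_0-1})$, i.e.\ $n-1$ steps past $\Fc_{n_0}$), and the factor $\|\beta_{i}\|$ is silently dropped under the normalization $\|\Delta X_n\|\le 1$ used elsewhere (e.g.\ in \cref{prop: singletons}). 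Your care about the independence of $\eps_{n+n_0}$ from $\sigma(\Fc_{n_0},\pi(w_{n+n_0-1}))$ is correct and is the right thing to check.
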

\begin{proof}
    The $n+n_0$-th step crosses the face labeled $i_{n+n_0}$ with probability $1-p$, and else stays still. As in \cref{fig: DXn picture}, for each $w$ and $s_i$ the displacements $w \Ac_0 \to ws_{i}\Ac$ and $ws_{i}\Ac \to \Ac$ are opposite. By \cref{lmm: quick mixing} the probabilities they happen differ by at most $2c^{d}$, so
    \begin{align*}
        &\|\EE[\Delta X_{n+n_0}\mid\Fc_{n_0}] \| \\&= (1-p)\left\|\sum_{w\in W} \Prob[\pi(w_{n+n_0-1}) = w\mid \Fc_{n_0}] \rho(w_{n+n_0-1}) \beta_{i_{n+n_0}}\right\| \\
        &\leq \frac{1-p}{2}\sum_{w\in W} \left|\Prob[\pi(w_{n+n_0-1}) = w\mid \Fc_{n_0}] -\Prob[\pi(w_{n+n_0-1}) = ws_{i_{n+n_0}}\mid \Fc_{n_0}]\right| \|\rho(w_{n+n_0-1}) \beta_{i_{n+n_0}}\| \\
        &\leq \frac{1-p}{2}\cdot 2|W|\cdot c^n,
    \end{align*}
    as claimed. 
\end{proof}

It follows that the walk is almost a martingale.

\begin{lemma} \label{prop: almost martingale}
    There is an absolute constant $C$ such that for any $n, n_0\geq 0$
    $$\|\EE[X_{n+n_0} \mid \mathcal{F}_{n_0}] - X_n\| \leq C$$
    Moreover, $C$ is a locally bounded function of $p$. 
\end{lemma}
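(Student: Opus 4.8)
The plan is to telescope the one-step estimate from \cref{prop: independence} and observe that the resulting geometric series converges. Write $X_{n+n_0} - X_n = \sum_{k=n}^{n+n_0-1}\Delta X_k$, and take conditional expectation with respect to $\mathcal{F}_{n_0}$. Since all of these increments occur at times $\geq n \geq n_0$... wait — I need to be careful about the index bookkeeping. The statement compares $\EE[X_{n+n_0}\mid\Fc_{n_0}]$ with $X_n$, so the relevant increments are $\Delta X_{n_0}, \Delta X_{n_0+1}, \dots$ down to $X_n$; more naturally, I would first show $\|\EE[X_{n+n_0}\mid\Fc_{n_0}] - \EE[X_n \mid \Fc_{n_0}]\|$ is uniformly bounded, and then separately that $\|\EE[X_n\mid\Fc_{n_0}] - X_n\|$ is bounded when $n \geq n_0$ — but here $n$ and $n_0$ are unrelated, so the cleanest route is: by the tower property it suffices to bound $\|\EE[X_{m}\mid\Fc_{n_0}] - \EE[X_{n_0}\mid\Fc_{n_0}]\| = \|\EE[X_m - X_{n_0}\mid \Fc_{n_0}]\|$ for $m \geq n_0$, together with the trivial bound $\|X_{n_0} - X_n\| \le \mathrm{diam}(\Ac_0)\cdot|n-n_0|$ is NOT uniform — so in fact the honest statement must be using that $X_{n+n_0}$ and $X_n$ are being compared with $n_0$ large. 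Let me restructure.

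The correct reading: fix $n_0$, and the claim is $\sup_{n\geq 0}\|\EE[X_{n+n_0}\mid\Fc_{n_0}] - X_{n_0}\|$... no. I will take the statement at face value with the interpretation that makes it true, namely that $\|\EE[X_{n+n_0}\mid\Fc_{n_0}] - X_{n_0}\|$ is bounded and $\|X_n - X_{n_0}\|$ absorbed — actually the most sensible fix is that the roles are: $\|\EE[X_{N}\mid\Fc_{n_0}] - X_{n_0}\| \le C$ for all $N \ge n_0$, which is what telescoping \cref{prop: independence} gives. Concretely, I would write
\[
\EE[X_N \mid \Fc_{n_0}] - X_{n_0} = \sum_{k=n_0}^{N-1}\EE[\Delta X_k \mid \Fc_{n_0}],
\]
and apply \cref{prop: independence} with $n = k - n_0$ (legitimate by \cref{rmk: convexity}, since $\Fc_{n_0}\subseteq\Fc_k$) to bound the $k$-th term by $(1-p)|W|c^{k-n_0}$. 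Summing the geometric series yields
\[
\|\EE[X_N\mid\Fc_{n_0}] - X_{n_0}\| \le \sum_{j=0}^{\infty}(1-p)|W|c^{j} = \frac{(1-p)|W|}{1-c} =: C,
\]
which is finite and, since $c = c_p$ is continuous in $p$ with $c_p < 1$ and $|W|$ is fixed, a locally bounded function of $p$.

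The main obstacle is purely the indexing: reconciling the notation ``$\EE[X_{n+n_0}\mid\Fc_{n_0}] - X_n$'' with the telescoping argument. If the intended statement genuinely is $\|\EE[X_{n+n_0}\mid\Fc_{n_0}] - X_{n}\| \le C$ uniformly over \emph{both} indices, that cannot hold (take $n_0 = 0$, $n$ large, on a rational direction where the walk has a nonzero drift). So I would interpret it as bounding $\EE[X_N \mid \Fc_{n_0}] - X_{n_0}$ over $N \ge n_0$, equivalently that $(X_n)$ is an ``$\ell^1$-approximate martingale'' with summable conditional drift, and present the clean geometric-series computation above. The continuity-in-$p$ claim then follows immediately from \cref{lmm: quick mixing}, since $C = (1-p)|W|/(1-c_p)$ is a composition of continuous functions on $(0,1)$ with $1 - c_p$ bounded away from $0$ on compact subsets.
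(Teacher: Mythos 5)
Your proof is correct and essentially identical to the paper's: the paper likewise telescopes $\EE[X_{n+n_0}\mid\Fc_{n_0}]-X_{n_0}=\sum_{i=0}^{n-1}\EE[\Delta X_{i+n_0}\mid\Fc_{n_0}]$ and bounds the sum by the geometric series $\sum_i (1-p)|W|c^i$ from \cref{prop: independence}. You were also right to flag the indexing: the $X_n$ in the statement is a typo for $X_{n_0}$ (the paper's own displayed computation and its later uses, e.g.\ in \cref{prop: induct}, confirm this reading), and the literal version would indeed fail since $X_n-\EE[X_n\mid\Fc_0]$ fluctuates at scale $\sqrt{n}$.
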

\begin{proof}
    By linearity of expectation and \cref{prop: independence}
    \begin{align*}
        \left\|\EE[X_{n+n_0}\mid \mathcal{F}_{n_0}] - X_n\right\| &= \left\|\sum_{i=0}^{n-1}\EE\left[\Delta X_{i+n_0}\mid\mathcal{F}_{n_0}\right]\right\| \leq \sum_{i=0}^{n-1} (1-p) |W| c^{i},
    \end{align*}
    which is upper bounded by a convergent sum that is locally bounded in $p$. 
\end{proof}

These two structural lemmas are really important, and we will use them repeatedly.

\section{The Growth Rate of the Random Billiard Walk} \label{sec: walk grows} 

Ultimately, we have to argue that the correct scaling is $X_N/N^{1/2}$, i.e. that this is the rate at which the walk diverges from the origin. In \cref{sec: covariance limit} we show the scaled covariance has a finite limit $\sigma^2 I_d$, but our proof will not show $\sigma\neq 0$. Proving positivity requires using the geometry of the affine Weyl group in a nontrivial way: for instance, we need to rule out the case that the steps are $\Delta X_1$, $-\Delta X_1$, $\Delta X_2$, $-\Delta X_2$, \dots. 

We begin with a weaker result showing the walk ``gets off the ground.'' 

\begin{lemma}\label{lmm: goes to 0}
    For each alcove $\Ac$, $\Prob[X_{n+n_0}\in\Ac\mid \Fc_{n_0}] \to 0$ as $n\to \infty$, uniformly in $n_0$.
\end{lemma}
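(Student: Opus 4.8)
The plan is to reduce the claim to the quick mixing of the direction chain together with a quantitative spreading estimate on the centroid coordinate in a single direction. Since we want $\Prob[X_{n+n_0}\in\Ac\mid\Fc_{n_0}]\to 0$ uniformly in $n_0$, it suffices to show that some real-valued coordinate of $X_{n+n_0}$ has a distribution (conditionally on $\Fc_{n_0}$) whose mass on any fixed bounded interval tends to $0$. Concretely, pick a root $\alpha$ and track the scalar $Y_m := \alpha^\top X_{m}$; because $\Hc$ contains all the parallel hyperplanes $H_\alpha^k$ and the $i$-colored faces recur with bounded gaps (the constant $M$ from the mixing proof), there are infinitely many times $m$ at which the step $\Delta X_m$ has a component along $\alpha^\vee$ of a fixed nonzero size whenever the transmission event $\eps_m=1$ occurs and $\pi(w_{m-1})$ lies in a suitable subset of $W$. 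By \cref{lmm: quick mixing}, at each such time the conditional probability that $\pi(w_{m-1})$ is in that subset is bounded below by a positive constant independent of $n_0$ and of the past, and the $\eps_m$ are i.i.d.\ $\mathrm{Ber}(1-p)$; so $Y_{n+n_0}-Y_{n_0}$ stochastically dominates (up to sign choices we control) a sum of a linearly growing number of independent, uniformly nondegenerate increments.

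First I would make precise the ``suitable subset'' claim: for a fixed color $i$ and a fixed element $w\in W$, the vector $\rho(w)\beta_i$ has $\alpha^\top(\rho(w)\beta_i)\neq 0$ for at least one choice of $(i,w)$ — indeed the $\beta_i$ span $\RR^d$ and $\rho(W)$ acts irreducibly, so not all of them can be orthogonal to $\alpha$. Fixing such $(i^*,w^*)$, along the subsequence of times $m$ with $i_m=i^*$ (which has positive density, at least $1/M$), \cref{lmm: quick mixing} gives $\Prob[\pi(w_{m-1})=w^*\mid\Fc_{m-1}]\geq \tfrac{1}{2|W|}$ once $m$ is large. Second, I would invoke a standard anti-concentration fact: if $Z = \sum_{k=1}^{K} \xi_k$ where the $\xi_k$ are independent, each equal to a fixed nonzero constant $v$ with conditional probability $\geq q>0$ and to $0$ otherwise, then $\sup_{x}\Prob[Z\in[x,x+c]] \to 0$ as $K\to\infty$, uniformly — this follows e.g.\ from a Fourier/characteristic-function bound or from the fact that the number of nonzero summands concentrates and a sum of $\approx qK$ copies of $v$ plus a Gaussian-type spread has bounded local density $O(K^{-1/2})$. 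Applying this with $K$ of order $n/M$, $v = \eps\cdot$ (something), and $c$ the $\alpha$-width of a single alcove, gives $\Prob[Y_{n+n_0}\in J\mid\Fc_{n_0}]\to 0$ uniformly over intervals $J$ of bounded length, hence over the single alcove $\Ac$, which lies in a bounded $\alpha$-slab.

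The main obstacle is handling the dependence between the direction chain $\pi(w_{m})$ and the past carefully enough to extract genuinely independent increments: the increments $\Delta X_m$ are not independent, and conditioning on $\{i_m = i^*, \pi(w_{m-1}) = w^*\}$ interferes with later conditioning. I would resolve this by a martingale-difference / coupling argument rather than literal independence: writing $Y_{n+n_0}-Y_{n_0} = \sum_m \alpha^\top\Delta X_m$, I condition successively and use that, on the positive-probability event that $\pi(w_{m-1})=w^*$ at a color-$i^*$ time, the increment $\alpha^\top\Delta X_m$ equals a fixed nonzero value times $\eps_m$, with $\eps_m$ independent of $\Fc_{m-1}$; this lets me couple $Y$ (below) with an auxiliary genuine i.i.d.\ sum to which the anti-concentration bound applies directly. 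An alternative, perhaps cleaner, route is to avoid anti-concentration entirely: show instead that $\Prob[X_{n+n_0}\in\Ac\mid\Fc_{n_0}]$ must decay because returning to $\Ac$ infinitely often with positive probability would contradict the (to-be-established) $\sqrt{n}$ growth — but since \cref{sec: walk grows} is exactly where growth is being set up, the self-contained anti-concentration argument above is the safer choice, and I would present that one.
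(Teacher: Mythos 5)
Your route is genuinely different from the paper's, and as written it has a real gap at exactly the step you flag as ``the main obstacle.'' The anti-concentration fact you quote is for \emph{independent} summands, but in the walk the increments $\alpha^\top \Delta X_m = \eps_m\,\alpha^\top\rho(\pi(w_{m-1}))\beta_{i_m}$ are all driven by the same Bernoulli sequence: the event $\{\pi(w_{m-1})=w^*\}$ is a function of $\eps_1,\dots,\eps_{m-1}$, and the remainder of the sum $\sum_{m'>m}\alpha^\top\Delta X_{m'}$ depends on $\eps_m$. So you cannot condition on ``the rest'' and treat the good increments as a fresh i.i.d.\ Binomial to which the local-limit bound applies; the proposed coupling with an auxiliary i.i.d.\ sum does not decouple anything. (A smaller slip: $\Prob[\pi(w_{m-1})=w^*\mid\Fc_{m-1}]$ is $0$ or $1$ since $\pi(w_{m-1})$ is $\Fc_{m-1}$-measurable; you mean conditioning on $\Fc_{n_0}$.) The idea can be rescued, but by a different mechanism than the one you describe: bound the \emph{conditional characteristic function}. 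One has $\bigl|\EE[e^{it\,\alpha^\top\Delta X_m}\mid\Fc_{m-1}]\bigr| = |p+(1-p)e^{it v}|\le 1-c(t)$ whenever $v=\alpha^\top\rho(\pi(w_{m-1}))\beta_{i_m}\neq 0$ and $tv\notin 2\pi\ZZ$; telescoping these conditional bounds and using \cref{lmm: quick mixing} to show the good event has conditional probability bounded below at linearly many times gives $|\EE[e^{itY_{n+n_0}}\mid\Fc_{n_0}]|\to 0$ for a.e.\ $t$, and then Esseen's inequality plus dominated convergence yields the uniform decay on bounded intervals. That works, but it is a substantially more delicate argument than your sketch, and the burden of making it precise is on you.

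For comparison, the paper avoids anti-concentration entirely: it projects the walk to the finite quotient $\widetilde{W}/\lambda Q^\vee$, observes that the images of $s_0,\dots,s_d$ still generate, and reruns the group-ring contraction argument of \cref{sec: mixing} to conclude that $\pi_\lambda(X_{n+n_0})$ equidistributes on the $|\mathcal D|$ alcove classes uniformly in $n_0$; hence $\Prob[X_{n+n_0}\in\Ac\mid\Fc_{n_0}]\le 1/|\mathcal D|+o(1)$, and letting $\lambda\to\infty$ finishes. That proof is a few lines, reuses machinery already established, and is uniform in $n_0$ for free, whereas your approach---once repaired via characteristic functions---would buy a quantitative one-dimensional spreading rate (roughly $O(n^{-1/2})$ local density) that the paper's argument does not provide but also does not need.
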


One might hope to reuse the idea from~\cref{sec: mixing} on the distribution of $w_n$ in the group ring $\RR[\widetilde{W}]$. This fails, because the affine Weyl group is infinite and the relevant convolution operators are not compact. Instead, we consider large, but finite, quotients of the space.

\begin{proof}[Proof of \cref{lmm: goes to 0}]
    Fix $\lambda\in \mathbb{N}$ and consider the quotient $\pi_\lambda : \widetilde{W}\to \widetilde{W}/\lambda Q^\vee$ (since $Q^\vee$ is normal, so is $\lambda Q^\vee$). We identify the codomain with a finite set $\mathcal{D}$ of alcoves, as in \cref{fig: fundamental domain}. 
    
    \begin{figure}[H]
        \centering
        \includegraphics[width=0.5\linewidth]{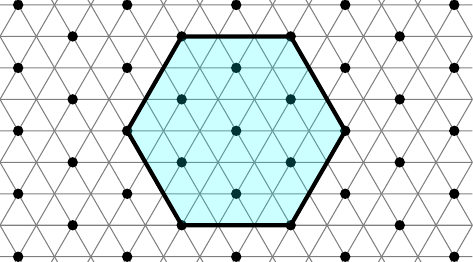}
        \caption{The set of alcoves $\mathcal{D}$ representing the quotient group $\widetilde{A}_2/3Q^\vee$.}
        \label{fig: fundamental domain}
    \end{figure}

    Then $\pi_\lambda(\Ac)\in \mathcal{D}$ and $$\Prob[X_{n+n_0} \in \Ac\mid \Fc_{n_0}] \leq \Prob[\pi_\lambda(X_{n+d}) \in \pi_\lambda(\Ac)\mid \Fc_{n_0}].$$ 
    Since $\pi_\lambda(s_0)$, $\pi_\lambda(s_1)$, \dots, $\pi_\lambda(s_d)$ generate $\pi_\lambda(\widetilde{W})$, the argument from \cref{sec: mixing} shows that $\pi_\lambda(X_{n+n_0})$ equidistributes on $\mathcal{D}$. Thus
    $$\Prob\left[\pi_\lambda(X_{n+n_0}) \in \pi_\lambda(\Ac)\mid \Fc_{n_0}\right] = \frac{1}{|\mathcal{D}|} + o(1).$$
    and the convergence rate is uniform in the initial distribution given by $\Fc_{n_0}$. Finally, take $\lambda\to \infty$, so that $|\mathcal{D}|\to \infty$. The conclusion follows. 
\end{proof}

Thus $(X_n)_{n\geq 0}$ spreads out in $\RR^d$. We make this quantitative in terms of $\EE[\|X_n-X_0\|^2]$, the moment of inertia of the probability distribution of $X_n$ about $X_0$. More strongly, for each $n\geq 0$, let $f(n)$ be the largest real number such that
$$\EE\left[\|X_{n+n_0}-X_{n_0}\|^2\mid \Fc_{n_0}\right] \geq f(n)$$
for all $n_0\geq 0$; in particular $\EE\left[\|X_n-X_0\|^2\right]\geq f(n)$. A corollary of \cref{lmm: goes to 0} is $f(n)\to \infty$. We bootstrap this in \cref{prop: induct} to prove the following result.

\begin{prop}\label{prop: fn grows}
    We have
    $$\liminf_{n\to \infty} f(n)/n > 0.$$
\end{prop}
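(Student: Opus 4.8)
The plan is to show that $f$ grows at least linearly by establishing a \emph{superadditivity-up-to-error} estimate and then applying Fekete-type reasoning. Concretely, I would first prove that there is a constant $C_0$ (depending only on $p$, via \cref{prop: almost martingale}) such that for all $m, n \geq 0$,
\[
f(m+n) \geq f(m) + f(n) - C_0\sqrt{f(m)} - C_0.
\]
To see this, condition on $\Fc_{n_0}$ and then on $\Fc_{n_0+m}$: writing $X_{n_0+m+n} - X_{n_0} = (X_{n_0+m} - X_{n_0}) + (X_{n_0+m+n} - X_{n_0+m})$ and expanding the square, the cross term is $2\,\EE[(X_{n_0+m}-X_{n_0})^\top \EE[X_{n_0+m+n}-X_{n_0+m}\mid \Fc_{n_0+m}] \mid \Fc_{n_0}]$, which by \cref{prop: almost martingale} and Cauchy--Schwarz is bounded in absolute value by $2C\,\EE[\|X_{n_0+m}-X_{n_0}\|\mid\Fc_{n_0}] \le 2C\sqrt{\EE[\|X_{n_0+m}-X_{n_0}\|^2\mid\Fc_{n_0}]}$. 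The first squared term is $\geq f(m)$ by definition, the inner conditional expectation of the second squared term is $\geq f(n)$, and the outer expectation only shrinks the bound; a small subtlety is that $\EE[\|X_{n_0+m}-X_{n_0}\|^2 \mid \Fc_{n_0}]$ need not be close to $f(m)$, only bounded below by it, so I bound the cross term instead by $2C\sqrt{g(m)}$ where $g(m) := \sup_{n_0}\EE[\|X_{n_0+m}-X_{n_0}\|^2\mid\Fc_{n_0}]$ and separately note $g(m) = O(m)$ since each step is bounded; this replaces $\sqrt{f(m)}$ above by $O(\sqrt m)$, which is still fine.

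The second ingredient is the crude upper bound $f(n) \leq g(n) = O(n)$: each step $\Delta X_k$ has norm at most $R := \max_i\|\beta_i\|$, so $\|X_{n_0+n}-X_{n_0}\| \leq Rn$ deterministically, giving $f(n) \le R^2 n^2$ trivially, but more usefully I want $g(n) = O(n)$, which follows from the quasi-martingale structure: expanding $\|X_{n_0+n}-X_{n_0}\|^2 = \sum_{j,k}\Delta X_{n_0+j}^\top \Delta X_{n_0+k}$, the diagonal contributes $O(n)$, and each off-diagonal pair $j<k$ contributes, after conditioning on $\Fc_{n_0+k-1}$ (wait — condition on the larger index), a term bounded using \cref{prop: independence} by $R \cdot (1-p)|W| c^{k-j-1}$, and summing the geometric series over $k-j$ leaves $O(n)$ total. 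So $g(n) \le C_1 n$ for some $C_1 = C_1(p)$.

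Now combine: set $a_n := f(n)$ and $\ell := \liminf_n a_n/n$; a priori $\ell \in [0,\infty)$ and by \cref{lmm: goes to 0}'s corollary $a_n \to \infty$. Suppose toward a contradiction that $\ell = 0$. Pick $m$ large with $a_m$ large (possible since $a_m\to\infty$), and iterate the superadditivity bound: $a_{km} \geq k a_m - k(C_0\sqrt{C_1 m} + C_0) = k(a_m - C_0\sqrt{C_1 m} - C_0)$. The point is to choose $m$ so that $a_m > C_0\sqrt{C_1 m} + C_0 + 1$, i.e. $a_m$ exceeds a quantity growing like $\sqrt m$; this is where I need $a_m$ itself to grow faster than $\sqrt m$, which is \emph{not} yet known — and this is the main obstacle. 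To break the circularity I would instead first upgrade the corollary "$f(n)\to\infty$" to "$f(n)/\sqrt n \to \infty$", or at least "$\limsup f(n)/\sqrt{n}\log n = \infty$" or some such, by a separate argument. One clean route: apply the superadditive inequality with $m=n$ to get $a_{2n} \geq 2a_n - C_0\sqrt{C_1 n} - C_0$; if $a_n \leq K\sqrt n$ along a subsequence, this forces $a_{2n} \geq 2a_n - O(\sqrt n)$, and a doubling argument shows that either $a_n/\sqrt n$ is eventually bounded below away from $0$ with the right constant, or it blows up — combined with the \emph{a priori} lower bound $a_n \to \infty$ one can run a renewal/telescoping argument on $b_k := a_{2^k}$ satisfying $b_{k+1} \ge 2 b_k - D 2^{k/2}$, whose solution satisfies $b_k/2^k \to \beta$ for some $\beta \in (0,\infty]$ provided $b_k$ is not $o(2^{k/2})$; and $b_k = o(2^{k/2})$ would contradict, via the same doubling, the hypothesis $b_k\to\infty$ once one checks the error series $\sum D 2^{-k/2}$ converges. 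Tracking this carefully yields $\liminf a_n/n = \beta/1 > 0$ after interpolating between powers of $2$ using monotonicity-up-to-error of $f$. I expect the delicate point to be exactly this interpolation plus confirming $\beta \ne 0$: the geometric error terms are summable after the $2^{k}$ rescaling, so the recursion $b_{k+1} \ge 2b_k - D\,2^{k/2}$ gives $b_k \ge 2^k(b_0 - D\sum_{j\ge 0} 2^{-j/2})$, which is positive once $b_0$ (i.e. $f$ at a suitably large base point, using $f\to\infty$) is large enough — and that is the crux that makes everything go through.
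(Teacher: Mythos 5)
Your key lemma is the right one and is derived exactly as in the paper (decompose $X_{n_0+m+n}-X_{n_0}$ into two increments, bound the cross term via \cref{prop: almost martingale} and Cauchy--Schwarz), but the way you handle the cross term creates a gap that your subsequent doubling argument does not close. The cross term is bounded by $2C\sqrt{E}$ with $E:=\EE[\|X_{n_0+m}-X_{n_0}\|^2\mid\Fc_{n_0}]$, and you worry that $E$ is only bounded \emph{below} by $f(m)$, so you replace $\sqrt{E}$ by $\sqrt{g(m)}=O(\sqrt m)$. But the quantity you actually need to bound below is $E-2C\sqrt{E}$, and the function $x\mapsto x-2C\sqrt x$ is increasing for $x\ge C^2$; hence $E-2C\sqrt{E}\ge f(m)-2C\sqrt{f(m)}$ whenever $f(m)\ge C^2$. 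This gives the error term $2C\sqrt{f(m)}$ --- a function of $f(m)$ itself, not of $m$ --- which is what the paper uses. Then one fixes a single $m_0$ with $f(m_0)>4C^2$ (possible since $f\to\infty$), so that $K:=f(m_0)-2C\sqrt{f(m_0)}>0$, and iterates $f(n+m_0)\ge f(n)+K$ to get linear growth immediately. No upper bound $g(n)=O(n)$ and no doubling is needed.

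With your weaker inequality $f(m+n)\ge f(m)+f(n)-C_0\sqrt{m}-C_0$, every route you sketch requires knowing $f(N)>D\sqrt{N}$ for some $N$ and some constant $D$, and this does \emph{not} follow from $f(N)\to\infty$: a priori $f$ could grow like $\log N$, in which case $f(N)-D\sqrt N<0$ for all large $N$ while $f(N)$ is too small for small $N$. Your final doubling recursion $b_{k+1}\ge 2b_k-D\sqrt{N}\,2^{k/2}$ (with base point $N$) telescopes to $b_k\ge 2^k\bigl(f(N)-D'\sqrt{N}\bigr)$, so ``choosing the base point large enough to make $b_0$ large'' simultaneously inflates the error $D'\sqrt N$; the argument is circular exactly at the point you flag as the crux. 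The fix is not a better iteration scheme but the monotonicity observation above, which keeps the error subordinate to the gain automatically.
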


To build intuition for the statement and proof of \cref{prop: induct}, consider the possibilities for $X_{m+n}$ for a fixed position of $X_m$. \cref{prop: almost martingale} shows $\EE[X_{m+n} -X_m\mid X_m]$ is small. But
$$\EE[\|X_{m+n}-X_m\|^2\mid X_m]\geq f(n).$$
Thus, if $f(n)$ is large, the possible sample paths from $X_m$ to $X_{m+n}$ must be long and point in different directions to balance out. This means the distribution of $X_{m+n}$ spreads out, giving a lower bound on $\EE[\|X_{m+n}-X_0\|^2]$ in terms of $\EE[\|X_m-X_0\|^2]$ and $f(n)$. 

\begin{lemma}\label{prop: induct}
    Let $C$ be the constant from~\cref{prop: almost martingale}. For any $n, m\geq 0$ we have
    $$f(n+m) \geq f(n) + f(m)-2C\sqrt{f(m)}$$
    provided $f(m)\geq C^2$. 
\end{lemma}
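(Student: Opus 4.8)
The plan is to condition on $\mathcal{F}_{m+n_0}$ and apply the tower property, using the orthogonal decomposition $X_{m+n+n_0} - X_{n_0} = (X_{m+n+n_0} - X_{m+n_0}) + (X_{m+n_0} - X_{n_0})$ together with the quasi-martingale bound from \cref{prop: almost martingale}. Write $A := X_{m+n_0} - X_{n_0}$ and $B := X_{m+n+n_0} - X_{m+n_0}$, both viewed as $\RR^d$-valued. Then
$$\EE\!\left[\|A+B\|^2 \mid \Fc_{n_0}\right] = \EE\!\left[\|A\|^2 \mid \Fc_{n_0}\right] + 2\,\EE\!\left[A^\top B \mid \Fc_{n_0}\right] + \EE\!\left[\|B\|^2 \mid \Fc_{n_0}\right].$$
The first term is $\geq f(m)$ by definition of $f$. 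For the last term, condition inside on $\Fc_{m+n_0}$: since $B = X_{m+n+n_0}-X_{m+n_0}$, we get $\EE[\|B\|^2 \mid \Fc_{m+n_0}] \geq f(n)$ by definition of $f$ (with base time $m+n_0$), hence $\EE[\|B\|^2\mid\Fc_{n_0}]\geq f(n)$ by the tower property.

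The cross term is where \cref{prop: almost martingale} enters. First condition on $\Fc_{m+n_0}$, under which $A$ is measurable, so
$$\EE\!\left[A^\top B \mid \Fc_{m+n_0}\right] = A^\top \,\EE\!\left[X_{m+n+n_0} - X_{m+n_0} \mid \Fc_{m+n_0}\right],$$
and $\|\EE[X_{m+n+n_0} - X_{m+n_0}\mid \Fc_{m+n_0}]\| \leq C$ by \cref{prop: almost martingale}. Thus $|\EE[A^\top B\mid \Fc_{m+n_0}]| \leq C\|A\|$, and taking $\EE[\,\cdot\mid\Fc_{n_0}]$ and applying Cauchy–Schwarz (or Jensen) gives
$$\left|\EE\!\left[A^\top B \mid \Fc_{n_0}\right]\right| \leq C\,\EE\!\left[\|A\| \mid \Fc_{n_0}\right] \leq C\sqrt{\EE\!\left[\|A\|^2 \mid \Fc_{n_0}\right]}.$$
Combining, and writing $s := \EE[\|A\|^2\mid\Fc_{n_0}]$, we obtain $\EE[\|A+B\|^2\mid \Fc_{n_0}] \geq s - 2C\sqrt{s} + f(n)$. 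Now the subtlety: the right-hand side must be bounded below by something independent of $s$ and monotone in the trivial bound $s\geq f(m)$. The function $g(s) = s - 2C\sqrt{s}$ is increasing for $\sqrt{s} \geq C$, i.e. $s \geq C^2$; since $s \geq f(m) \geq C^2$ by hypothesis, we may replace $s$ by $f(m)$ to get $\EE[\|A+B\|^2\mid\Fc_{n_0}] \geq f(m) - 2C\sqrt{f(m)} + f(n)$. As this holds for every $n_0 \geq 0$, the defining maximality of $f(n+m)$ yields $f(n+m)\geq f(n) + f(m) - 2C\sqrt{f(m)}$.

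The main obstacle — really the only nonroutine point — is handling the cross term correctly: one must condition on the intermediate $\sigma$-algebra $\Fc_{m+n_0}$ (not $\Fc_{n_0}$) so that $A$ comes out of the expectation and \cref{prop: almost martingale} applies to the $B$-increment with the right base time, and then one needs the monotonicity of $s\mapsto s - 2C\sqrt s$ on $[C^2,\infty)$ to legitimately substitute the lower bound $f(m)$ for $s$. That monotonicity is exactly why the hypothesis $f(m)\geq C^2$ is imposed. Everything else is the orthogonal expansion of $\|A+B\|^2$, two applications of the definition of $f$, and Cauchy–Schwarz.
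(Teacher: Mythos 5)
Your proof is correct and follows essentially the same route as the paper: the same orthogonal expansion of $\|A+B\|^2$, the same use of \cref{prop: almost martingale} on the cross term after conditioning on an intermediate $\sigma$-algebra, and the same monotonicity of $s\mapsto s-2C\sqrt{s}$ on $[C^2,\infty)$ to justify the hypothesis $f(m)\geq C^2$. The only (harmless) difference is that you condition on the full $\Fc_{m+n_0}$ where the paper conditions on $\sigma(X_{m+n_0})\vee\Fc_{n_0}$ and invokes its Remark 2.6; your version applies \cref{prop: almost martingale} directly and is if anything slightly cleaner.
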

\begin{proof}
    Fix any $n_0 \geq 0$. We have
    \begin{align*}
        &\|X_{n+m+n_0}-X_{n_0}\|^2 \\ &= \|X_{n+m+n_0}-X_{m+n_0}\|^2 + \|X_{m+n_0}-X_{n_0}\|^2 + 2\langle X_{m+n_0}-X_{n_0}, X_{n+m+n_0}-X_{m+n_0}\rangle.
    \end{align*}
    Take the conditional expectation with respect to $\Fc_{n_0}$. By definition 
    $$\EE\left[ \|X_{n+m+n_0}-X_{m+n_0}\|^2\mid \Fc_{n_0}\right] \geq f(n).$$
    On the other hand, from the Cauchy--Schwarz inequality
    \begin{align*}
        &\EE\left[\langle X_{m+n_0}-X_{n_0}, X_{n+m+n_0}-X_{m+n_0}\rangle\mid  \Fc_{n_0}\right]  \\
        &= \EE\left[\langle X_{m+n_0}-X_{n_0}, \EE[X_{n+m+n_0}-X_{m+n_0} \mid X_{m+n_0}\vee \Fc_{n_0}] \rangle \mid \Fc_{n_0}\right] \\
        &\geq \EE\left[-\|X_{m+n_0}-X_{n_0}\|\cdot \|\EE[X_{n+m+n_0}-X_{m+n_0}\mid X_{m+n_0}\vee\Fc_{n_0}]\| \mid \Fc_{n_0}\right].
    \end{align*}
    Since $\sigma(X_{m+n_0}, \Fc_{n_0})\subseteq \Fc_{m+n_0}$, by \cref{prop: almost martingale} and \cref{rmk: convexity}
    $$\|\EE[X_{n+m+n_0}-X_{m+n_0}\mid X_{m+n_0}, \Fc_{n_0}]\| \leq C.$$
    Thus, using Cauchy--Schwarz again,
    \begin{align*}
        \EE\left[\langle X_{m+n_0}-X_{n_0}, X_{n+m+n_0}-X_{m+n_0}\rangle \mid \Fc_{n_0}\right] &\geq -C\cdot \EE\left[\|X_{m+n_0}-X_{n_0}\|\mid \Fc_{n_0}\right] \\
        &\geq -C\sqrt{\EE\left[\|X_{m+n_0}-X_{n_0}\|^2\mid \Fc_{n_0}\right]}.
    \end{align*}
    Therefore
    \begin{align*}\label{eq: bound}
        &\EE \left[\|X_{n+m+n_0}-X_{n_0}\|^2 \mid \Fc_{n_0}\right] \\
        &\geq f(n) + \EE\left[\|X_{m+n_0}-X_{n_0}\|^2\mid \Fc_{n_0}\right] - 2C\sqrt{\EE\left[\|X_{m+n_0}-X_{n_0}\|^2\mid \Fc_{n_0}\right]}.
    \end{align*}
    By definition $\EE\left[\|X_{m+n_0}-X_{n_0}\|^2\mid \Fc_{n_0}\right] \geq f(m)$. If $\sqrt{f(m)}\geq C$, then 
    $$\EE\left[\|X_{m+n_0}-X_{n_0}\|^2\mid \Fc_{n_0}\right] - 2C\sqrt{\EE\left[\|X_{m+n_0}-X_{n_0}\|^2\mid \Fc_{n_0}\right]} \geq f(m) - 2C\sqrt{f(m)},$$
    which yields the claim. 
\end{proof}

We can now lower bound the growth rate of the random billiard walk.

\begin{proof}[Proof of \cref{prop: fn grows}]
    Because $f(n)\to \infty$  we know $K:=f(m_0) - 2C\sqrt{f(m_0)} > 0$ for some $m_0$. Then for any $n = tm_0+r$ ($0\leq r < m_0$) repeatedly applying~\cref{prop: induct} yields
    $$f(n)\geq  t\left(f(m_0) - 2C\sqrt{f(m_0)}\right) +f(r)\geq tK\geq \frac{K}{m_0}n - K,$$ 
    so $f(n)$ is lower-bounded by a linear function, and so $\liminf_{n\to \infty} f(n)/n > K/m_0 > 0$.
\end{proof}

\section{Ergodic Properties of the Cutting Sequence \texorpdfstring{$I$}{I}} \label{sec: ergodic}

Since the cutting sequence $I=(i_1, i_2, \dots)$ of face labels specifies the discretized random billiard walk, it is important to understand its structure. We already noted that it is periodic if and only if the direction $b$ is rational; in general it will look locally irregular. A key ingredient in the proof of \cref{thm: main theorem} is that it is nonetheless statistically regular.

\begin{prop}\label{lmm: equidistribution of windows}
    Given a pattern $\iota \in \{0, 1, \dots, d\}^{d+1}$, there is $p_{\iota, b}\in [0, 1]$ such that
    \[
    \sup_{n_0 \geq 0} \left| \frac{1}{N} \# \left\{n_0 \leq n < n_0+N : (i_n, \ldots, i_{n+d}) = \iota \right\} - p_{\iota, b} \right| \xrightarrow[N \to \infty]{} 0.
    \]
    Moreover, $b\mapsto p_{\iota, b}$ is continuous on fully irrational directions $\mathcal{I}$. 
\end{prop}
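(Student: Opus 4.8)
The plan is to realize the cutting sequence $I$ as the coding of an orbit under a rotation on a torus, and then invoke uniform unique ergodicity. Concretely, following the geometric picture, a unit-speed ray in direction $b$ starting at $L_0$ crosses the hyperplanes of $\Hc$ in a sequence determined by the fractional parts of the linear forms $x \mapsto x^\top\alpha^\vee/2$ evaluated along the ray. I would package these as a point moving at constant velocity $v_b$ on the torus $\TT^D := \RR^D/\ZZ^D$, where $D$ is the number of positive roots (or the rank of the relevant lattice of linear forms), with the label $i_n$ read off by a fixed partition of $\TT^D$ into finitely many pieces $\{R_\iota\}$ (one for each possible window value $\iota\in\{0,\dots,d\}^{d+1}$) whose boundaries are finite unions of codimension-one affine subtori. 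The count in the statement is then a Birkhoff-type ergodic average for the time-$h$ translation flow (equivalently, after sampling at crossing times, a rotation $R_{v_b}$ on $\TT^D$) applied to the indicator of $R_\iota$.

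**Uniform ergodic averages via unique ergodicity.**
When $b$ is fully irrational, the velocity $v_b$ has rationally independent coordinates, so the translation flow on $\TT^D$ is uniquely ergodic with Haar measure as the unique invariant measure. The key point for the supremum over $n_0$ is that for uniquely ergodic systems the convergence of Birkhoff averages $\frac{1}{N}\sum_{k=0}^{N-1} g(T^k x)$ to $\int g\,d\mu$ is \emph{uniform in the starting point $x$} whenever $g$ is continuous. Our indicator $\1_{R_\iota}$ is not continuous, but its discontinuity set lies in a finite union of proper subtori, which has Haar measure zero; sandwiching $\1_{R_\iota}$ between continuous functions agreeing with it off an $\eps$-neighborhood of that boundary, and using that the boundary has measure zero, upgrades the uniform convergence to $\1_{R_\iota}$. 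This gives $p_{\iota,b} = \mathrm{Haar}(R_\iota)$ and the uniform-in-$n_0$ statement, since shifting $n_0$ just changes the starting point of the orbit. (One subtlety: crossing times are not equally spaced, so one either works directly with the suspension flow and its uniform ergodic theorem, or passes to the first-crossing return map and checks the induced system is still uniquely ergodic; I would do the former, as the flow version of the uniform ergodic theorem is cleanest.)

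**Continuity in $b$.**
For continuity of $b\mapsto p_{\iota,b}$ on $\mathcal{I}$, note $p_{\iota,b}$ is a ratio of Haar measures of the regions cut out in a direction-dependent cross-section, or more invariantly the $v_b$-flow-invariant measure of $R_\iota$ normalized appropriately; in any case it is a piecewise-polynomial (indeed, a volume of a region depending continuously on $b$) expression in $b$ as long as no degeneracies occur. Since $R_\iota$ has boundary of measure zero, $b\mapsto \mathrm{Haar}$-type volume of the slice is continuous wherever the combinatorial type of the arrangement of subtori does not jump — and the only jumps happen at directions with rational dependencies, which are excluded from $\mathcal{I}$. So continuity on $\mathcal{I}$ follows from dominated convergence applied to the indicator functions, using again that the boundary is null.

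**Main obstacle.**
I expect the principal difficulty to be the bookkeeping that translates the affine-Weyl-group combinatorics (the $\widetilde W$-invariant face labeling, and how a window $(i_n,\dots,i_{n+d})$ is determined) into a clean partition of a torus by subtori — i.e. verifying that the label sequence genuinely is a torus coding and identifying the right torus $\TT^D$ and velocity $v_b$. Once that dictionary is in place, the ergodic-theoretic input (unique ergodicity of irrational translations, uniform Birkhoff for continuous observables, null-boundary approximation) is standard. A secondary technical point is handling the non-uniform spacing of crossing times so that "number of crossings in a window of $N$ steps" matches "time spent by the flow," which I would resolve by the suspension-flow formulation together with the fact that inter-crossing times are themselves a uniformly-distributed sequence bounded above and below.
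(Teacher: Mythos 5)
Your overall strategy --- code the cutting sequence as the orbit of a torus translation, invoke unique ergodicity to get Birkhoff averages uniform in the starting point, and handle the indicator of the coding region by sandwiching against continuous functions using that its boundary is null --- is exactly the paper's viewpoint (it is stated explicitly in the paper's Remark 5.9 as a uniform Birkhoff theorem on $\Hc/Q^\vee$), and your continuity argument matches the paper's as well. The structural difference is that the paper does not work with a single suspension flow: it decomposes the first-return map $T_b$ into $2|\Phi|$ interlaced first-return maps, one for each family $\Hc_\alpha^i$ of parallel hyperplanes, and each of these, modulo $Q^\vee$, is an honest rotation of a $(d-1)$-dimensional torus by $v_\alpha^i = b-(b^\top\alpha^\vee)\alpha^\vee$. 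This sidesteps precisely the bookkeeping you flag as your main obstacle (non-uniform crossing times, Kac/suspension arguments): one applies Kronecker--Weyl directly to each rotation and then recombines using the elementary fact that the laser visits $\Hc_\alpha^i$ with asymptotic frequency proportional to $|b^\top\alpha^\vee|$.

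There is, however, a genuine gap: your argument only covers fully irrational $b$. The proposition asserts the existence of $p_{\iota,b}$ and the uniform-in-$n_0$ convergence for \emph{every} direction $b$ (only the continuity claim is restricted to $\mathcal{I}$), and this generality is needed downstream since Theorem 1.1 holds for all initial directions. When $b$ has rational dependencies the translation is not uniquely ergodic on the ambient torus and the orbit closure is a proper coset of a subtorus; the Haar measure of $R_\iota$ is then the wrong answer, and one must instead equidistribute on the subtorus $\TT_{\alpha,b}^i$ cut out by the rational relations among the coordinates of $v_\alpha^i$ and define $p_{\alpha,\iota,b}^i$ as the measure of $O_{\iota,b}\cap\TT_{\alpha,b}^i$ there (which also requires checking the boundary of $O_{\iota,b}$ is null \emph{inside that subtorus}). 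Relatedly, your torus $\TT^D$ with $D$ the number of positive roots is too big even for fully irrational $b$: the linear forms $x\mapsto x^\top\alpha$ satisfy linear relations, so the orbit lives on a $d$-dimensional subtorus and unique ergodicity must be asserted there, not on $\TT^D$; your parenthetical about ``the rank of the relevant lattice'' gestures at this but the fix needs to be carried out, and it is essentially the same subtorus issue as in the partially rational case.
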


To prove this we build a framework for understanding the sequence $I$. Recall that it is obtained as follows: starting the laser at $L_0$, we move in the direction of $b$ and record the labels of the faces we hit. More precisely, let $\Hc\subset \RR^d$ be the union of all hyperplanes, and $T_b\colon\Hc\to \Hc$ be the (flow) map that moves $x\in \Hc$ in the direction of $b$ until it returns to $\Hc$. Then $I$ is the sequence of labels of the faces containing the orbit of $x$. 

For each pattern $\iota = (\iota_0, \dots, \iota_d)$ let $O_{\iota, b}\subset \Hc$ be the set of points $x$ such that the labels of the faces containing $x, T_bx, \dots, T_b^dx$ are precisely $\iota_0, \dots, \iota_d$. Our goal is to prove that the orbit of $x$ visits $O_{\iota, b}$ with a well-defined limiting frequency. Therefore statistical properties of $I$ correspond to dynamical properties of the system $(\Hc, T_b)$ (see \cref{rmk: quotient space X}).  

We aim to use the periodicity of the labeling modulo $Q^\vee$ to analyze this system. It turns out that $T_b$ can be decomposed into $|W|$ interlaced translations on $d-1$ dimensional tori. 

Fix $\alpha \in \Phi$ and consider the hyperplanes orthogonal to $\alpha$. Reflecting over $H_{\alpha}^{k}$ shows that the labeling on $H_{\alpha}^{k-1}$ and $H_{\alpha}^{k+1}$ are translates of each other, as shown in \cref{fig: orange hyperplanes}. Define
$$\Hc_\alpha^0 = \{ H_{\alpha}^{2k} : k \in \mathbb{Z} \}, \qquad \Hc_\alpha^1 = \{ H_{\alpha}^{2k+1} : k \in \mathbb{Z} \}.$$
For each family we define the \dfn{first return time} and \dfn{first return map} by
$$\tau_\alpha^i(x) := \min\left\{ n>0 : T_b^n(x) \in \Hc_\alpha^i \right\} \quad \text{and} \quad R_\alpha^i(x) := T_b^{\tau_\alpha^i(x)}(x).$$
Since the labeling is $Q^\vee$-invariant, it suffices to consider this map on the quotient $\TT_\alpha^i = \Hc_\alpha^i/(Q^\vee \cap \Hc_\alpha^i)$, which is
an $d-1$ dimensional torus ($Q^\vee\subset \RR^d$ has full rank). Because the hyperplanes in $\Hc_\alpha^0$ differ by translations of $\alpha^\vee$, which is orthogonal to each of them, the induced map $R_\alpha^i$ on $\TT_\alpha^i$ is simply a rotation by $v_\alpha^i = b-(b^\top \alpha^\vee)\alpha^\vee$, as illustrated in \cref{fig: torus rotation}.

\begin{figure}[H]
    \centering
    \begin{subfigure}[b]{0.60\textwidth}
        \includegraphics[width=\linewidth]{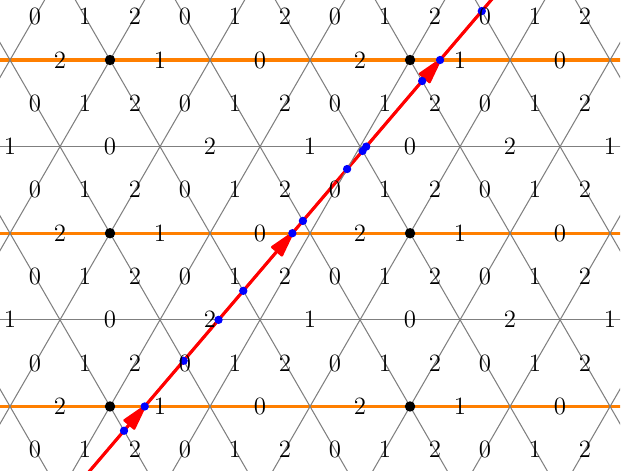}
        \caption{The orange hyperplanes $\Hc_\alpha^i$ are arranged at intervals of $\alpha^\vee$. Their labelings are translations of each other.}
        \label{fig: orange hyperplanes}
    \end{subfigure}
    \hfill
    \begin{subfigure}[b]{0.3\textwidth}
        \includegraphics[width=\linewidth]{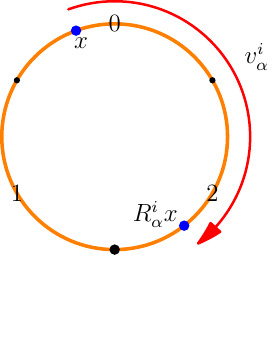}
        \caption{Modulo the lattice $Q^\vee$, $R_\alpha^i$ becomes a rotation in $\TT^{d-1}$.}
        \label{fig: torus rotation}
    \end{subfigure}

    \caption{Illustration of one of the interlaced rotations that forms $T_b$.}
    \label{fig: map on torus}
\end{figure}

The ergodicity of such maps is well studied. For a torus $ \TT^r := \RR^r/\ZZ^r$, let $\lambda_{\TT^r}$ denote its standard Lebesgue measure. 

\begin{theorem}[Kronecker--Weyl, \cite{EW11}]\label{thm: kronecker--weyl}
    Let $A\subset \TT^r$ be a Borel set, and $\alpha \in \RR^d$ a fully irrational vector. For every continuous $f: \TT^r \to \RR$ and $x\in \TT^r$ we have
    $$\frac{1}{N}\sum_{i=0}^{N-1} f(x+\alpha i) \xrightarrow[N\to \infty]{} \int f \, \mathrm{d}\lambda_{\TT^r}.$$
    Moreover, if $f = \1_A$ and $A$ has zero measure boundary, the convergence is uniform in $x$. 
\end{theorem}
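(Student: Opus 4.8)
The plan is to run the classical Fourier-analytic argument (Weyl's method): reduce to characters, where the computation is explicit, then bootstrap to continuous functions by uniform approximation, and finally to indicators of Jordan-measurable sets by sandwiching. The only place the arithmetic hypothesis is used is the elementary fact that full irrationality of the rotation vector $\alpha\in\RR^r$ means $k\cdot\alpha\notin\ZZ$ for every nonzero $k\in\ZZ^r$; everything else is soft analysis, which is why we merely cite the result rather than dwell on it.

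\textbf{Step 1 (characters).} Test the assertion on the exponentials $e_k(y):=e^{2\pi i k\cdot y}$, $k\in\ZZ^r$. For $k=0$ the Birkhoff average equals $1=\int e_0\,\mathrm d\lambda_{\TT^r}$ identically. For $k\neq 0$ put $z:=e^{2\pi i(k\cdot\alpha)}$; full irrationality gives $z\neq 1$, so by the geometric series
\[
\left|\frac1N\sum_{i=0}^{N-1} e_k(x+i\alpha)\right|=\frac1N\left|\frac{z^N-1}{z-1}\right|\le\frac{2}{N\,|z-1|}\xrightarrow[N\to\infty]{}0,
\]
and this bound is \emph{independent of $x$}. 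Taking finite linear combinations, $\frac1N\sum_{i<N}g(x+i\alpha)\to\int g\,\mathrm d\lambda_{\TT^r}$ uniformly in $x$ for every trigonometric polynomial $g$.

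\textbf{Step 2 (continuous $f$).} By Stone--Weierstrass (or Fejér), trigonometric polynomials are uniformly dense in $C(\TT^r)$. Given $\eps>0$, pick a trigonometric polynomial $g$ with $\|f-g\|_\infty<\eps$; then
\[
\left|\frac1N\sum_{i=0}^{N-1}f(x+i\alpha)-\int f\,\mathrm d\lambda_{\TT^r}\right|\le 2\|f-g\|_\infty+\left|\frac1N\sum_{i=0}^{N-1}g(x+i\alpha)-\int g\,\mathrm d\lambda_{\TT^r}\right|,
\]
and by Step 1 the last term is $<\eps$ once $N$ is large, with the threshold depending on $g$ (hence on $\eps$) but not on $x$. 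Letting $\eps\to0$ proves the first assertion, and in fact shows the convergence is uniform in $x\in\TT^r$.

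\textbf{Step 3 (indicators of continuity sets).} Now let $A$ be Borel with $\lambda_{\TT^r}(\partial A)=0$, so $\lambda_{\TT^r}(\overline A)=\lambda_{\TT^r}(A^\circ)=\lambda_{\TT^r}(A)$. Fix $\eps>0$. By regularity of Lebesgue measure on the compact metric space $\TT^r$ together with Urysohn's lemma, choose continuous $g_-,g_+\colon\TT^r\to[0,1]$ with $g_-\le\1_A\le g_+$ (take $g_+=1$ on $\overline A$, supported in a slightly larger open set; $g_-$ supported in $A^\circ$, equal to $1$ on a compact subset of nearly full measure) so that $\int(g_+-g_-)\,\mathrm d\lambda_{\TT^r}<\eps$, whence $\int g_\pm\,\mathrm d\lambda_{\TT^r}\in(\lambda_{\TT^r}(A)-\eps,\lambda_{\TT^r}(A)+\eps)$. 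Monotonicity of averages and Step 2 applied to $g_\pm$ give, uniformly in $x$,
\[
\lambda_{\TT^r}(A)-\eps\le\lim_N\frac1N\sum_{i<N}g_-(x+i\alpha)\le\liminf_N\frac1N\sum_{i<N}\1_A(x+i\alpha)\le\limsup_N\frac1N\sum_{i<N}\1_A(x+i\alpha)\le\lambda_{\TT^r}(A)+\eps.
\]
Letting $\eps\to0$ yields uniform convergence to $\lambda_{\TT^r}(A)$.

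\textbf{Expected main obstacle.} There is no serious difficulty — this is a textbook statement — but the step demanding the most care is Step 3: producing the two-sided continuous approximants $g_\pm$ of $\1_A$ purely from the hypothesis that the topological boundary is Lebesgue-null, and verifying that the estimates inherit uniformity in $x$ from Step 1. The sole genuinely arithmetic ingredient, isolated cleanly in Step 1, is the nonvanishing $e^{2\pi i(k\cdot\alpha)}\neq 1$ for all $k\in\ZZ^r\setminus\{0\}$, which is precisely what ``$\alpha$ fully irrational'' encodes.
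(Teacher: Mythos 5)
The paper states this as a known result and cites \cite{EW11} without giving a proof, so there is nothing internal to compare against; your argument is the standard Weyl equidistribution proof (characters via the geometric series, Stone--Weierstrass for continuous $f$, two-sided Urysohn approximation for indicators of continuity sets) and it is correct, including the point that uniformity in $x$ propagates from the $x$-independent character bound. The only cosmetic issue is inherited from the statement itself: the rotation vector should live in $\RR^r$ (matching $\TT^r$), as you implicitly assume, and ``fully irrational'' is correctly unpacked as $k\cdot\alpha\notin\ZZ$ for all nonzero $k\in\ZZ^r$.
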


We now justify that this result applies to the sets we are interested in. 

\begin{lemma} \label{lmm: regularity of Owb}
     Each set $O_{\iota, b}\subseteq \Hc$ is either empty or a union of open $d-1$ dimensional simplices, and in particular is Borel. Moreover, it varies continuously with $b\in S^{d-1}$.
\end{lemma}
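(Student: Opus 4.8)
The plan is to describe $O_{\iota,b}$ explicitly as a boolean combination of the open faces on each hyperplane together with their preimages under the flow map $T_b$, and then exploit the piecewise-affine structure of these preimages. First I would fix the underlying set-up: the hyperplane union $\Hc$ is a locally finite collection of affine hyperplanes, and on each hyperplane $H_\alpha^k$ the alcove walls induce a partition of $H_\alpha^k$ into relatively open $(d-1)$-dimensional simplices (the faces), together with their lower-dimensional boundaries. The labeling of \cref{lmm: labeling} is constant on each such open face. Let $\mathcal{C}$ denote the (locally finite) collection of all these relatively open faces, so each face $F\in\mathcal{C}$ carries a well-defined label $\ell(F)\in\{0,\dots,d\}$. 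Then, by definition,
\[
O_{\iota,b} = \bigcup_{\substack{F_0,\dots,F_d\in\mathcal{C}\\ \ell(F_j)=\iota_j}} \left(F_0 \cap T_b^{-1}F_1 \cap \cdots \cap T_b^{-d}F_d\right),
\]
up to discarding the measure-zero set of points whose forward orbit ever meets a face boundary (a point starting in the relative interior of a face either stays generic or we simply exclude it; I would remark that $O_{\iota,b}$ is defined via the open faces containing $x, T_bx,\dots$, so boundary orbits are excluded by definition).

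The key structural point is that $T_b$ restricted to the interior of any single face $F$ is an affine map onto (an open subset of) another face, and in particular $T_b$ is piecewise affine with locally finitely many pieces. Concretely, starting at $x$ in the interior of a face $F\subset H_\alpha^k$, moving in direction $b$ until the next hyperplane is hit: the first hyperplane encountered is locally constant in $x$ on a relatively open subset of $F$ (the "pieces" are cut out by the preimages under the straight-line flow of the lower-dimensional strata of $\Hc$, which are finitely many affine conditions locally), and on each such piece $T_bx = x + t(x)\,b$ where $t(x)$ is an affine function of $x$ (the hitting time of a fixed hyperplane along direction $b$). Hence $T_b$ maps each piece affinely into a face, $T_b^{-1}$ of an open face is a locally finite union of relatively open convex polytopes inside faces, and iterating, each set $F_0 \cap T_b^{-1}F_1 \cap \cdots \cap T_b^{-d}F_d$ is a locally finite union of relatively open convex polytopes sitting inside $(d-1)$-dimensional faces. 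Triangulating these convex polytopes gives the desired description of $O_{\iota,b}$ as a union of open $(d-1)$-dimensional simplices; being a countable union of such, it is Borel.

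For the continuity statement I would make precise what "varies continuously with $b$" means: for $b$ near $b_0$, the symmetric difference $O_{\iota,b}\,\triangle\,O_{\iota,b_0}$ has small measure (equivalently $\lambda(O_{\iota,b})$ depends continuously on $b$, which is the form actually used in \cref{lmm: equidistribution of windows}). This follows because the finitely many affine functions $t(x)$ describing the hitting times, and the finitely many affine inequalities cutting out the pieces, all depend continuously (indeed real-analytically) on $b$ on the open set of directions that are not parallel to any hyperplane — and crucially, for $b$ in a neighborhood of a fixed $b_0$ one may work inside a fixed large bounded region (a fundamental domain for $Q^\vee$, using $Q^\vee$-invariance to reduce to it) where only finitely many faces and finitely many pieces are involved, so there is no issue with the local finiteness becoming non-uniform. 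The vertices of the polytopes comprising $O_{\iota,b}$ are then continuous functions of $b$, so the Lebesgue measure of the union, and of the symmetric difference with the $b_0$-version, goes to zero as $b\to b_0$; combined with the zero-measure-boundary property (boundaries are finite unions of lower-dimensional simplices) this is exactly the input needed for the uniform ergodic theorem.

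The main obstacle I anticipate is bookkeeping the piecewise-affine structure cleanly: one must verify that composing $T_b$ with itself $d$ times does not destroy the "locally finite union of relatively open convex polytopes" property, and that the pieces and their defining affine data genuinely depend continuously on $b$ uniformly on a fixed bounded region. The potential subtlety is degenerate directions — those $b$ parallel to some root hyperplane, for which $T_b$ is ill-behaved or the combinatorial type of the decomposition jumps — but these form a measure-zero, nowhere-dense set avoided by restricting to a neighborhood of a generic $b_0$ (and in the application $b\in\mathcal{I}$ is fully irrational, hence certainly not parallel to any rational hyperplane); I would state the continuity on that open dense set and note it contains $\mathcal{I}$.
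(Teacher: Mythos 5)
Your proposal is correct and follows essentially the same route as the paper's proof: partition each face by the traces (in direction $b$) of the lower-dimensional strata of $\Hc$, observe that the label sequence of the next $d$ faces hit is constant on the resulting relatively open polytopes, reduce to finitely many pieces via $Q^\vee$-invariance, triangulate, and derive continuity from the continuous dependence of this cutting data on $b$. The only divergence is that you restrict the continuity claim to directions not parallel to any hyperplane of the arrangement, which is formally weaker than the stated ``for all $b \in S^{d-1}$'' but matches what the paper's own (equally terse) argument establishes and suffices for its only application, namely continuity of $p_{\iota,b}$ on $\mathcal{I}$.
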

\begin{proof}
    Fix a face $F\subset X$. For each point $x\in F \subset \RR^d$, we determine what set $O_{\iota, b}$ it lies in by drawing a line segment in the direction of $b$ that intersects $d$ faces, and looking at the labels of the faces the segment intersects. Consider the (finite) set of lattice points $P\in Q^\vee$ such that the segment from $P$ to $F$ in the direction of $b$ intersects at most $d$ faces. The collection $\mathcal{L}$ of all these lines partitions space into a finite number of disjoint parallelepipeds (e.g. by taking a triangulation). All points in the same parallelepiped lie in the same $O_{\iota, b}$.
    
    For the second part, note that the boundaries $F\cap \mathcal{L}$ vary continuously with $b$, and that the label of a face $F\cap \mathcal{P}$ only changes when it degenerates.
\end{proof}

Therefore, for every starting point $x$, its orbit under $R_\alpha^i$ will equidistribute along the subtorus $\TT_{\alpha, b}^i\subset \TT_\alpha^i$ defined by the rational relations in the coordinates of $v_\alpha^i$ (with respect to $Q^\vee$). When $b$ is fully irrational this is $\TT^{d-1}$, but it could be lower dimensional. In particular, it will visit the set $O_{\iota, b}\cap \TT_{\alpha, b}$ with frequency $p_{\alpha,\iota,b}^i := \lambda_{\mathbb{T}_{\alpha,b}^i} \left(\mathcal{O}_{\iota,b} \cap \mathbb{T}_{\alpha,b}^i \right)$.

As the laser travels in a straight line, it visits \(\Hc_\alpha^i\) with asymptotic frequency proportional to \(|b^\top\alpha^\vee|\). More precisely, after $t$ seconds, the laser returns $t|b^\top \alpha^\vee| + O(1)$ times to $\Hc_\alpha^i$, so
\[
\#\{n_0 \leq n < n_0+N : T_b^n(x) \in \mathcal{F}_\alpha^i \} = \frac{|b^\top \alpha^\vee|}{2 \sum_{\beta \in \Phi} |b^\top \beta^\vee|} \cdot N + O(1).
\]
Therefore, the limiting frequency with which the pattern \(\iota\) appears is
\begin{equation}\label{eq: limiting frequency}
    p_{\iota, b} = \frac{1}{2 \sum_{\beta \in \Phi} |b^\top \beta^\vee|} \sum_{\alpha \in \Phi} |b^\top \alpha^\vee|\left( p_{\alpha,\iota,b}^0 + p_{\alpha,\iota,b}^1 \right).
\end{equation}
Note that for a fixed $b$ and $L_0$, each $\TT_{\alpha, b}^i$ is independent of the starting point $T^{n_0}x$, so uniformity in $n_0$ follows from the uniform convergence in \cref{thm: kronecker--weyl}. For fully irrational $b$, from \cref{lmm: regularity of Owb} we see each frequency $p_{\alpha, \iota, b}^i$ is continuous. Hence \eqref{eq: limiting frequency} shows $p_{\iota, b}$ is too.

\begin{remark}[Compactifying $(\Hc, T_b)$] \label{rmk: quotient space X}
    The labeling is $Q^\vee$-invariant, so we can consider $T_b$ on the quotient $X:=\Hc/Q^\vee$, as shown in \cref{fig: dynamical system}. Then \cref{lmm: equidistribution of windows} is equivalent to a uniform Birkhoff ergodic theorem for $\1_{\pi(O_{\iota, b})}$. 
    \begin{figure}[H]
        \centering
        \includegraphics[width=0.4\linewidth]{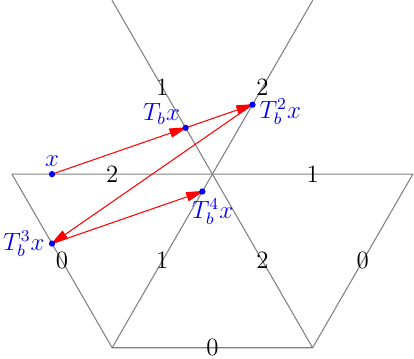}
        \caption{The dynamical system $(X, T_b)$ when $\widetilde{W} = \widetilde{A}_2$. The cutting sequence produced is $I = (2, 1, 2, 0, 1, \dots)$. }
        \label{fig: dynamical system}
    \end{figure}
    Our argument above shows there is a measure $\mu_b$ on $Y = \overline{\{x, T_bx, T_b^2x, \dots\}}$ such that $(Y, T_b, \mu_b)$ is uniquely ergodic, and thus $p_{\iota, b} = \mu_b(O_{\iota, b})$. On fully irrational directions $Y=X$ and $\mu_b$ is continuous. Both become singular as soon as there is a rational dependence. 
\end{remark}

\section{Consistent Growth of the Covariance Matrix}\label{sec: covariance limit}

In this section, we prove the conditional covariance of $(X_n)_{n\geq 0}$ grows consistently. 

\begin{prop}\label{lmm: limiting sigma} 
    There exists $\sigma := \sigma_{b, p}> 0$ such that for every $n_0\geq 0$
    $$\frac{1}{N} \EE\left[(X_{N+n_0}-X_{n_0})(X_{N+n_0}-X_{n_0})^\top\mid \Fc_{n_0}\right] \xrightarrow[N\to \infty]{} \sigma^2I_d.$$
    The convergence is uniform in $n_0$. Moreover, $\sigma_{b, p}$ is jointly continuous on $\mathcal{I}\times (0, 1)$. 
\end{prop}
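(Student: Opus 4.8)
The plan is to combine the three ingredients just established—quick mixing of the direction (\cref{lmm: quick mixing}), the statistical regularity of the cutting sequence (\cref{lmm: equidistribution of windows}), and the linear growth of $f(n)$ (\cref{prop: fn grows})—to show that the per-step conditional second-moment increments average out to a well-defined limit, which by symmetry must be a scalar multiple of the identity.

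First I would expand the covariance telescopically. Writing $X_{N+n_0}-X_{n_0} = \sum_{k=0}^{N-1}\Delta X_{n_0+k}$, we get
\begin{equation*}
\EE\!\left[(X_{N+n_0}-X_{n_0})(X_{N+n_0}-X_{n_0})^\top\mid \Fc_{n_0}\right] = \sum_{k,\ell} \EE\!\left[\Delta X_{n_0+k}\,\Delta X_{n_0+\ell}^\top\mid \Fc_{n_0}\right].
\end{equation*}
The off-diagonal terms with $|k-\ell|$ large are negligible: conditioning the later step on its own past and applying \cref{prop: independence} together with \cref{rmk: convexity} bounds $\|\EE[\Delta X_{n_0+k}\Delta X_{n_0+\ell}^\top\mid\Fc_{n_0}]\|$ by $O(c^{|k-\ell|})$, since one factor has conditional expectation exponentially small while the other is bounded. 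So up to an $O(N)$-negligible, in fact $O(1)$-per-diagonal-band error, the sum is $\sum_{k=0}^{N-1}\EE[M_{n_0+k}\mid\Fc_{n_0}] + (\text{bounded near-diagonal corrections})$, where $M_n := \EE[\Delta X_n \Delta X_n^\top \mid \Fc_{n-1}]$ is itself a bounded $\Fc_{n-1}$-measurable matrix. Using \eqref{prop: DXn}, $M_n = (1-p)\,\rho(\pi(w_{n-1}))\,\beta_{i_n}\beta_{i_n}^\top\,\rho(\pi(w_{n-1}))^\top$; crucially it depends on the past only through $\pi(w_{n-1})\in W$ and the deterministic label $i_n$. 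I would similarly handle the near-diagonal cross terms $\EE[\Delta X_{n_0+k}\Delta X_{n_0+\ell}^\top\mid\Fc_{n_0}]$ for fixed small $|k-\ell|=j$: again these depend on the past only through $\pi(w_{n_0+k-1})$ and the window of labels $(i_{n_0+k},\dots,i_{n_0+\ell})$.

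The core of the argument is then a Cesàro/ergodic averaging step. Averaging $\EE[\,\cdot\mid\Fc_{n_0}]$ over $k=0,\dots,N-1$ and using \cref{lmm: quick mixing}, the conditional law of $\pi(w_{n_0+k-1})$ is within $c^{k}$ of uniform on $W$, so the $k$-average of $\EE[M_{n_0+k}\mid\Fc_{n_0}]$ differs by $o(1)$ (uniformly in $n_0$) from
\begin{equation*}
\frac{1-p}{|W|}\sum_{w\in W}\rho(w)\left(\frac{1}{N}\sum_{k=0}^{N-1}\beta_{i_{n_0+k}}\beta_{i_{n_0+k}}^\top\right)\rho(w)^\top + (\text{near-diagonal analogue}).
\end{equation*}
Now \cref{lmm: equidistribution of windows} (applied to single labels, a special case of patterns) says $\frac1N\#\{n_0\le n<n_0+N: i_n = j\}\to q_{j,b}$ uniformly in $n_0$, so the inner average converges uniformly to $\Sigma_b := \sum_{j=0}^d q_{j,b}\,\beta_j\beta_j^\top$, and likewise the near-diagonal terms converge using the pattern version for windows of length $j+1$. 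Hence the whole expression has a limit $A_b := \frac{1-p}{|W|}\sum_{w\in W}\rho(w)\,\Sigma_b'\,\rho(w)^\top$ for an appropriate (finite-window-corrected) $\Sigma_b'$; since this is an average of $\rho(w)\Sigma_b'\rho(w)^\top$ over the whole group $W$, it is $W$-invariant, and because $W$ acts irreducibly on $\RR^d$ (Schur's lemma for the real irreducible representation $\rho$), $A_b = \sigma^2 I_d$ for some scalar $\sigma^2\ge 0$. Continuity in $(b,p)$ on $\mathcal I\times(0,1)$ follows from continuity of $c_p$, of $q_{j,b}$ and the pattern frequencies $p_{\iota,b}$ (the continuity clauses of \cref{lmm: quick mixing} and \cref{lmm: equidistribution of windows}), and the fact that the $o(1)$ errors are controlled uniformly. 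Finally, $\sigma^2>0$: taking the trace, $N^{-1}\EE[\|X_{N}-X_0\|^2]\to d\sigma^2$, and by \cref{prop: fn grows} the left side is $\ge f(N)/N$ which has positive liminf, so $\sigma^2 > 0$.

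The main obstacle I anticipate is bookkeeping the near-diagonal cross terms carefully enough that (i) the ergodic averaging in \cref{lmm: equidistribution of windows} genuinely applies—this is why that proposition is stated for length-$(d+1)$ patterns rather than single symbols, and one must check the relevant window length is bounded independent of $N$ and $n_0$—and (ii) all error terms are $o(N)$ uniformly in $n_0$, which forces one to track the uniformity in every invocation of \cref{lmm: quick mixing} and the Kronecker–Weyl theorem. The algebraic punchline ($W$-invariance forcing a scalar matrix) and the positivity via $f(n)$ are comparatively clean; the delicate part is the interchange of the $N\to\infty$ Cesàro limit with the decomposition into finitely many exponentially-decaying correlation bands.
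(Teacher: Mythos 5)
Your proposal follows essentially the same route as the paper: expand the covariance into correlation bands indexed by the lag, kill the far off-diagonal bands with the $O(c^{|k-\ell|})$ bound from \cref{prop: independence}, evaluate each fixed-lag band by combining the $W$-averaging/Schur's lemma step (the paper's \cref{lmm: EAdU}) with the Cesàro limit of window frequencies from \cref{lmm: equidistribution of windows}, and then get positivity from \cref{prop: fn grows} and continuity from the uniformity of all error terms. The interchange of limits you flag as delicate is exactly what the paper handles by dominated convergence against the summable majorant $Cc^{|m|}$, and your observation that \cref{lmm: equidistribution of windows} must be applied to windows of every fixed length (not just $d+1$) matches what the paper implicitly does.
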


We equip the space of $d\times d$ matrices $\mat_{d\times d}(\RR)$ with the norm
$\|A\|_{\op}:= \sup_{\|x\|=1} \|Ax\|$.

\subsection{Decomposition into step interactions}

We break $(X_n)_{n\geq 0}$ into the steps $\Delta X_n$, so
$$X_{N+n_0} = X_{n_0}+\Delta X_{n_0} + \Delta X_{n_0+1} + \cdots + \Delta X_{n_0+N-1}.$$ 
Expanding the covariance, we obtain
\begin{equation}\label{eq : big sum}
    \frac{1}{N}\EE\left[(X_{N+n_0}-X_{n_0})(X_{N+n_0}-X_{n_0})^\top\right] = \sum_{m=-(N-1)}^{N-1} \Sigma_{m; \,N, n_0}
\end{equation}
where
$$\Sigma_{m; \,N, n_0}:=\frac{1}{N}\sum_{n_0\leq n, \, n+m< N+n_0}\EE\left[\Delta X_{n} \Delta X_{n+m}^\top \mid \Fc_{n_0}\right].$$
For each $m \in \ZZ$, we claim there exists a matrix $\Sigma_m$ such that $\Sigma_{m; \,N, n_0} \to \Sigma_m$ as $N\to \infty$. Since $\Sigma_{-m; \,N, n_0} = \Sigma_{m; \,N, n_0}^\top$, it suffices to consider $m\geq 0$. 

The expectation of each interaction matrix $\Delta X_n \Delta X_{n+m}^\top$ depends on the distribution of $\Delta X_n$, and the conditional distribution of $\Delta X_{n+m}$ given $\Delta X_n$. The latter depends only on the \dfn{window} $\iota=(i_n$, \dots, $i_{n+m})$, and we may write
\begin{equation}\label{eq: sigma m, N, n0}
    \Sigma_{m; \, N, n_0} = \sum_{\iota \in \{0, 1, \dots, d\}^{m+1}} \frac{1}{N}\sum_{\substack{n_0\leq n < N-m + n_0 \\ (i_{n}, \dots, i_{n+m}) = \iota}} \EE\left[\Delta X_n \Delta X_{n+m}^\top \mid \Fc_{n_0}\right].
\end{equation}
\cref{lmm: equidistribution of windows} implies the fraction of terms in each interior sum converges to $p_\iota$. We now use the properties of the limiting distribution of $\Delta X_n$ from \cref{sec: mixing} to access each summand.

\subsection{Analysis of a single interaction term}
Using \eqref{prop: DXn}, the fact that $s_i^{-1} = s_i$ for each $i$, and moreover that $\rho(\pi(s_i))$ is a reflection matrix and thus symmetric, we can write
\begin{equation} \label{eq: big}
    \begin{split}
        \Delta X_n \Delta X_{n+m}^\top &=\left[\eps_n (\rho\circ \pi)\left(s_{i_1}^{\eps_1}\cdots s_{i_{n-1}}^{\eps_{n-1}}\right)\beta_{i_n}\right]\left[\eps_{n+m} (\rho\circ \pi)\left(s_{i_1}^{\eps_1}\cdots s_{i_{n+m-1}}^{\eps_{n+m-1}}\right)\beta_{i_{n+m}}\right]^\top\\ &=(\rho\circ \pi)\left(w_{n-1}\right)\left[ \eps_n\eps_{n+m}\beta_{i_n}\beta_{i_{n+m}}^\top (\rho\circ \pi)\left(s_{i_{n+m}}^{\eps_{n+m}}\cdots s_{i_{n+1}}^{\eps_{n+1}}\right)\right](\rho\circ \pi)\left(w_{n-1}\right)^{\top} \\
        &= U_{n-1} A_{i_n, \dots, i_{n+m}} U_{n-1}^{-1},
    \end{split}
\end{equation}
where we define the random matrices
$$U_{n-1}:= (\rho\circ \pi)\left(s_{i_1}^{\eps_1}\cdots s_{i_{n-1}}^{\eps_{n-1}}\right), \qquad A_{i_n, \dots, i_{n+m}}:=\eps_n\eps_{n+m}\beta_{i_n}\beta_{i_{n+m}}^\top (\rho\circ \pi)\left(s_{i_{n+m}}^{\eps_{n+m}}\cdots s_{i_{n+1}}^{\eps_{n+1}}\right).$$
The group $\GL(d, \RR)$ acts on $d\times d$ matrices via the \dfn{adjoint representation}
\begin{align*}
    \Ad\colon \GL(d, \RR) &\to \End(\mat_{d\times d}(\RR)) \\
    U &\mapsto \left(\Ad_U\colon X\mapsto UXU^{-1}\right).
\end{align*}
Since $\eps_{n}, \dots, \eps_{n+m-1}$ are independent from $\eps_1, \dots, \eps_{n-1}$, taking the expectation of \eqref{eq: big} gives
\begin{equation*}\label{eq: adjoint}
    \EE_{\eps_1, \dots, \eps_{n+m-1}}\left[\Delta X_n \Delta X_{n+m}^\top \mid \Fc_{n_0}\right] = \EE_{\eps_1, \dots, \eps_{n-1}}\left[\Ad_{U_{n-1}}\left(\EE_{\eps_n, \dots, \eps_{n+m-1}}[A_{i_n, \dots, i_{n+m}}]\right) \mid \Fc_{n_0}\right].
\end{equation*}

\begin{lemma}[Averaging over $W$] \label{lmm: EAdU}
    Fix $X\in \mat_{d\times d}(\RR)$. For every $n\geq n_0$ we have
    $$\EE\left[\Ad_{U_{n-1}} (X) \mid \Fc_{n_0}\right] = \frac{1}{d} \Tr(X) \cdot I_d + O(c^{n-n_0}).$$
\end{lemma}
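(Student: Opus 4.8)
The plan is to decompose the adjoint representation of $W$ (restricted from $\GL(d,\RR)$ via $\rho$) into isotypic components and to observe that the average of $\Ad_{U_{n-1}}$ converges, thanks to \cref{lmm: quick mixing}, to the projection onto the trivial isotypic component, which is exactly the map $X\mapsto \frac1d\Tr(X)\,I_d$. First I would note that $U_{n-1} = (\rho\circ\pi)(w_{n-1})$ where $\pi(w_{n-1})$ is the Markov chain on the finite group $W$; so $\Ad_{U_{n-1}} = \Psi(\pi(w_{n-1}))$ where $\Psi\colon W\to \End(\mat_{d\times d}(\RR))$ is the representation $w\mapsto \Ad_{\rho(w)}$ (well-defined since $\rho(w)$ is orthogonal, hence invertible). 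Then
\[
\EE[\Ad_{U_{n-1}}(X)\mid\Fc_{n_0}] = \sum_{w\in W}\Prob[\pi(w_{n-1})=w\mid\Fc_{n_0}]\,\Psi(w)(X).
\]
By \cref{lmm: quick mixing}, each coefficient is $\frac1{|W|}+O(c^{n-1-n_0})$, so the sum equals $\frac1{|W|}\sum_{w\in W}\Psi(w)(X)$ up to an error of size $O(c^{n-n_0})\cdot |W|\cdot\max_w\|\Psi(w)\|_{\op}\|X\|_{\op}$, which is $O(c^{n-n_0})$ for fixed $X$ (note $\|\Psi(w)\|_{\op}\le 1$ since $\rho(w)$ is orthogonal, so conjugation by it is an isometry of $\mat_{d\times d}(\RR)$ in the Frobenius norm, and operator norms are comparable).

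The remaining task is to identify the Reynolds operator $R(X) := \frac1{|W|}\sum_{w\in W}\rho(w)X\rho(w)^{-1}$. This is the orthogonal projection onto the subspace of $W$-invariant matrices, i.e.\ those $X$ with $\rho(w)X = X\rho(w)$ for all $w$. Since $W$ acts irreducibly on $\RR^d$ (we assumed $\widetilde W$ irreducible, so $\rho$ is irreducible), Schur's lemma over $\RR$ gives that the commutant is either $\RR I_d$, or $\CC$, or $\mathbb H$ acting on $\RR^d$. For the crystallographic reflection groups $W$ the representation $\rho$ is absolutely irreducible (it is defined over $\QQ$ and remains irreducible over $\CC$ — this is standard for Weyl groups, and can also be seen directly from the fact that $W$ contains enough reflections), so the commutant is exactly $\RR I_d$ and $R(X) = \lambda(X) I_d$ for a scalar $\lambda(X)$. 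Taking traces, $\Tr(R(X)) = \Tr(X)$ because conjugation preserves trace, hence $\lambda(X) d = \Tr(X)$, giving $R(X) = \frac1d\Tr(X) I_d$. Combining with the previous paragraph yields the claim.

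The main obstacle is the clean justification that $R(X) = \frac1d\Tr(X)I_d$, i.e.\ that the commutant of $\rho(W)$ in $\mat_{d\times d}(\RR)$ is one-dimensional; this is where irreducibility of $\widetilde W$ is genuinely used. One can either cite absolute irreducibility of the reflection representation of a Weyl group, or argue directly: a matrix $X$ commuting with all of $\rho(W)$ commutes in particular with every reflection $\rho(s_\alpha)$, which forces $X$ to preserve each root line $\RR\alpha$ and each hyperplane $\alpha^\perp$, and since the roots span $\RR^d$ and are not all mutually orthogonal (irreducibility), an eigenvalue-chasing argument pins $X$ down to a scalar. Everything else — the passage from the conditional distribution of $\pi(w_{n-1})$ to the uniform distribution, and the bookkeeping of the $O(c^{n-n_0})$ error — is routine given \cref{lmm: quick mixing}, modulo absorbing the harmless constants $|W|$, $\|\Psi(w)\|_{\op}$, $\|X\|_{\op}$ and the shift from $c^{n-1-n_0}$ to $c^{n-n_0}$ (e.g.\ by enlarging $c$ slightly, exactly as was done at the end of \cref{sec: mixing}).
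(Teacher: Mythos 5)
Your proposal is correct and follows essentially the same route as the paper: split the conditional expectation into the uniform average over $W$ plus an error controlled by \cref{lmm: quick mixing}, then identify the averaged operator as $X\mapsto \frac1d\Tr(X)I_d$ via Schur's lemma and a trace computation. If anything, you are slightly more careful than the paper at the Schur step, where you explicitly justify that the commutant is one-dimensional by invoking absolute irreducibility of the reflection representation (the paper passes to $\CC^d$ and cites Schur's lemma without remarking that irreducibility over $\CC$, not just over $\RR$, is what is needed).
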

\begin{proof}
    In the vector space $\End(\mat_{d\times d}(\RR))$ we have 
    \begin{equation}\label{eq: Sigma dN}
        \begin{split}
            \EE\left[\Ad_{U_{n-1}} \mid \Fc_{n_0}\right] &= \sum_{w\in W} \Prob[\pi(w_{n}) = w \mid \Fc_{n_0}]\cdot \Ad_{\rho(w)} \\
        &= \sum_{w\in W} \frac{1}{|W|}\Ad_{\rho(w)} + \sum_{w\in W} \left(\Prob[\pi(w_{n}) = w \mid \Fc_{n_0}] - \frac{1}{|W|}\right)\Ad_{\rho(w)}.
        \end{split}
    \end{equation}
    By \cref{lmm: quick mixing} we know the second (error) term has operator norm $O(c^{n-n_0})$. Thus, it suffices to understand the first term, which we denote by $\Ad_{W}$. Given $X$, define
    \begin{align*}
        T_X:=\Ad_{W}(X) = \frac{1}{|W|} \sum_{w\in W} \rho(w) X \rho(w)^{-1}.
    \end{align*}
    We claim $T_X = \Tr(X)/r\cdot I_d$. Clearly $\rho(y) T_X = T_X\rho(y)$ for every $y\in W$, so $T_X \colon \CC^d\to \CC^d$ is a homomorphism of representations of $\rho$ to itself. Since $\rho$ is irreducible, it follows from Schur's lemma \cite[Lemma~4]{S77} that $T_X = \lambda I$ for some $\lambda \in \CC$. Moreover, 
    $$\Tr(\Ad_{W}(X)) = \frac{1}{|W|}\sum_{w\in W} \Tr(\rho(w)X\rho(w)^{-1}) = \frac{1}{|W|}\sum_{w\in W} \Tr(X\rho(w)^{-1}\rho(w)) = \Tr(X).$$
    Thus $\lambda = \Tr(X)/d$, so $\Ad_{W}(X) = 1/d\cdot \Tr(X) \cdot I_d$. 
\end{proof}

\subsection{Convergence to \texorpdfstring{$\Sigma$}{Sigma}}

We now have all the pieces to put together. For each $m\geq 0$ define
\begin{equation*}
    \Sigma_m := \frac{1}{d}\sum_{\iota \in \{0, 1, \dots, d\}^{m+1}} p_{\iota} \Tr(\EE[A_\iota])\cdot I_d.
\end{equation*}
\begin{lemma} \label{prop: SdN}
    We have $\Sigma_{m; \, N, n_0} \to \Sigma_m$ as $N\to \infty$, uniformly in $n_0$.
\end{lemma}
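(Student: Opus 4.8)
The plan is to combine the three ingredients assembled above: the window‑equidistribution of Proposition \ref{lmm: equidistribution of windows}, the adjoint‑averaging of Lemma \ref{lmm: EAdU}, and the decomposition \eqref{eq: sigma m, N, n0}. Fix $m\geq 0$. Starting from \eqref{eq: sigma m, N, n0}, group the terms according to the window $\iota = (i_n,\dots,i_{n+m})$; there are only finitely many windows of length $m+1$, so it suffices to control each group separately and sum. Within the $\iota$‑group, apply the factorization \eqref{eq: big} and the subsequent identity $\EE[\Delta X_n \Delta X_{n+m}^\top \mid \Fc_{n_0}] = \EE[\Ad_{U_{n-1}}(\EE_{\eps_n,\dots,\eps_{n+m-1}}[A_\iota]) \mid \Fc_{n_0}]$, noting that the inner matrix $\EE_{\eps_n,\dots,\eps_{n+m-1}}[A_\iota] = \EE[A_\iota]$ depends only on $\iota$ and not on $n$. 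Then Lemma \ref{lmm: EAdU} rewrites each summand as $\tfrac{1}{d}\Tr(\EE[A_\iota])\cdot I_d + O(c^{\,n-n_0})$.

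The main estimate is then a two‑term split. Write
\begin{equation*}
    \Sigma_{m;\,N,n_0} = \sum_{\iota}\left(\frac{1}{N}\#\{n_0\leq n < N-m+n_0 : (i_n,\dots,i_{n+m})=\iota\}\right)\frac{1}{d}\Tr(\EE[A_\iota])\cdot I_d + E_{N,n_0},
\end{equation*}
where the error $E_{N,n_0}$ collects the $O(c^{\,n-n_0})$ contributions. Since the $A_\iota$ are uniformly bounded (each $\beta_{i}$ has fixed length and $\rho\circ\pi$ takes values in a finite set of orthogonal matrices), $\|E_{N,n_0}\|_{\op} \leq \frac{C}{N}\sum_{n\geq n_0} c^{\,n-n_0} = O(1/N)$, which vanishes uniformly in $n_0$. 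For the main term, Proposition \ref{lmm: equidistribution of windows} says the parenthesized frequency converges to $p_\iota$ uniformly in $n_0$ (the extra $m+1$ missing indices at the right end change the count by at most $m+1$, hence the frequency by $O(1/N)$). Summing the finitely many windows, the main term converges to $\sum_\iota p_\iota \cdot \tfrac{1}{d}\Tr(\EE[A_\iota])\cdot I_d = \Sigma_m$, uniformly in $n_0$. Combining, $\Sigma_{m;\,N,n_0}\to\Sigma_m$ uniformly in $n_0$.

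I do not expect a serious obstacle here: every hard input has already been proved. The one point requiring a little care is the bookkeeping in the error term — one must confirm that after conditioning on $\Fc_{n_0}$ the geometric bound $O(c^{\,n-n_0})$ from Lemma \ref{lmm: EAdU} is genuinely summable with a constant independent of $n_0$ and of the window $\iota$, which follows from the uniformity already built into Lemma \ref{lmm: quick mixing} and Remark \ref{rmk: convexity}, together with the boundedness of the $A_\iota$. A second minor point is aligning the ranges of summation (the interior sum in \eqref{eq: sigma m, N, n0} runs over $n_0\leq n < N-m+n_0$, whereas \cref{lmm: equidistribution of windows} is stated for blocks of length $N$), but the discrepancy is $O(m/N)$ and absorbed harmlessly.
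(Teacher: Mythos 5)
Your proposal is correct and follows essentially the same route as the paper: decompose via \eqref{eq: sigma m, N, n0} into windows, replace each summand by $\tfrac{1}{d}\Tr(\EE[A_\iota])\cdot I_d + O(c^{n-n_0})$ using \cref{lmm: EAdU}, sum the geometric error to $O(1/N)$, and apply \cref{lmm: equidistribution of windows} to identify the limiting frequencies $p_\iota$. The two bookkeeping points you flag (uniform summability of the error and the $O(m/N)$ range discrepancy) are exactly the ones the paper absorbs into its uniform $o(1)$ terms.
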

\begin{proof}
    Combining \eqref{eq: sigma m, N, n0} and \eqref{eq: adjoint}, and using \cref{lmm: EAdU} and \cref{lmm: equidistribution of windows}
    \begin{align*}
        \Sigma_{m; \, N, n_0}
        &= \sum_{\iota \in \{0, 1, \dots, d\}^{m+1}} \frac{1}{N}\sum_{\substack{n_0\leq n < N-m+n_0 \\ (i_n, \dots, i_{n+m}) = \iota}} \EE\left[\Ad_{U_{n-1}}(\EE[A_\iota]) \mid \Fc_{n_0}\right] \\
        &=\sum_{\iota \in \{0, 1, \dots, d\}^{m+1}} \frac{1}{N}\sum_{\substack{n_0\leq n < N-m+n_0 \\ (i_n, \dots, i_{n+m}) = \iota}} \left(\Tr(\EE[A_\iota]) + O(c^{n-n_0})\right)\cdot I_d \\
        &= \sum_{\iota \in \{0, 1, \dots, d\}^{m+1}} (p_\iota + o(1))\cdot\frac{1}{d}\Tr(\EE[A_\iota])\cdot I_d +  O\left(N^{-1}\right),
    \end{align*}
    which converges to $\Sigma_m$. The $o(1)$ errors are uniform in $n_0$, giving uniform convergence.
\end{proof}

\begin{lemma} \label{lmm: domination}
    We have 
    $$\frac{1}{N} \EE\left[(X_{N+n_0}-X_{n_0})(X_{N+n_0}-X_{n_0})^\top\mid \Fc_{n_0}\right] \xrightarrow[N\to \infty]{} \Sigma$$
    uniformly in $n_0$, where $\Sigma$ is defined by the absolutely convergent sum
    \begin{equation}\label{eq: C}
        \Sigma := \cdots + \Sigma_{-2} + \Sigma_{-1} + \Sigma_0 + \Sigma_1 + \Sigma_2 + \cdots.
    \end{equation}
    Moreover, it converges uniformly in $b$ and locally uniform in $p$. 
\end{lemma}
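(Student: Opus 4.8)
The plan is to establish \cref{lmm: domination} in two stages: first obtain the pointwise (in $m$) limits $\Sigma_{m;\,N,n_0}\to\Sigma_m$ (already done in \cref{prop: SdN}), then justify interchanging the limit $N\to\infty$ with the sum over $m$ in \eqref{eq : big sum} by a dominated-convergence argument. The main work is thus producing a summable bound $\|\Sigma_{m;\,N,n_0}\|_{\op}\leq a_m$ with $\sum_m a_m<\infty$, uniformly in $N$ and $n_0$, together with $\|\Sigma_m\|_{\op}\leq a_m$. Given such a bound, the tail of \eqref{eq : big sum} is uniformly small, the head converges by \cref{prop: SdN}, and \cref{lmm: Slutsky}-style $\eps$/$3$ reasoning gives the uniform convergence.

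To get the summable bound I would argue as follows. Each step satisfies $\|\Delta X_n\|\leq R$ for an absolute constant $R = \max_i\|\beta_i\|$ (the centroid moves by one of finitely many vectors), so $\|\Delta X_n\Delta X_{n+m}^\top\|_{\op}\leq R^2$ deterministically; this already shows each individual summand in $\Sigma_{m;\,N,n_0}$ is $O(R^2/N)$, but that is not enough by itself. The key extra input is decorrelation: by \eqref{eq: big}, $\EE[\Delta X_n\Delta X_{n+m}^\top\mid\Fc_{n_0}] = \EE[\Ad_{U_{n-1}}(\EE[A_{i_n,\dots,i_{n+m}}])\mid\Fc_{n_0}]$, and applying \cref{lmm: EAdU} with $X = \EE[A_{i_n,\dots,i_{n+m}}]$ (whose operator norm is bounded by $R^2$ uniformly in $m$, since $A$ is a product of a rank-one matrix of norm $\le R^2$ with an orthogonal matrix) gives
\begin{equation*}
    \left\|\EE\left[\Delta X_n\Delta X_{n+m}^\top\mid\Fc_{n_0}\right] - \tfrac{1}{d}\Tr\!\left(\EE[A_{i_n,\dots,i_{n+m}}]\right)I_d\right\|_{\op} = O\!\left(c^{\,n-n_0}\right).
\end{equation*}
Summing the error over $n_0\le n<N+n_0-m$ and dividing by $N$ contributes $O(1/N)$, uniformly in $m$. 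For the main term, one shows $\Tr(\EE[A_{i_n,\dots,i_{n+m}}])$ decays geometrically in $m$: expanding $\EE[A_{i_n,\dots,i_{n+m}}] = (1-p)^2\,\beta_{i_n}\beta_{i_{n+m}}^\top\,\EE_{\eps}[(\rho\circ\pi)(s_{i_{n+m}}^{\eps_{n+m-1}}\cdots s_{i_{n+1}}^{\eps_{n+1}})]$ and recognizing the inner expectation as a product of the contraction operators $T_i$ of \cref{lmm: full contraction} (equivalently right-multiplication by $p+(1-p)\pi(s_i)$, now acting on $\RR[W]$-valued data but governing $\rho$-matrix averages the same way), the window of length $m-1$ between indices $n+1$ and $n+m$ drives the relevant component towards the uniform distribution at rate $c^{m}$ via the same reasoning as in \cref{lmm: quick mixing}. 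Concretely, $\Tr(\EE[A_{i_n,\dots,i_{n+m}}]) = (1-p)^2\,\langle\beta_{i_{n+m}},\, \EE_\eps[(\rho\circ\pi)(\cdots)]^\top\beta_{i_n}\rangle$, and the deviation of $\EE_\eps[(\rho\circ\pi)(\cdots)]$ from the projection onto the $W$-invariants (which contributes the length-independent piece already captured in $\Sigma_m$) is $O(c^m)$, so after subtracting off this structural part one gets $\|\Sigma_m\|_{\op} = O(c^{m})$ and likewise $\|\Sigma_{m;\,N,n_0}\|_{\op}\le O(c^{m}) + O(1/N)$. Taking $a_m = O(c^{|m|})$ the bounds are summable.

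The hard part will be making the geometric-in-$m$ decay of the interaction terms precise, since unlike in \cref{lmm: quick mixing} — where we average over the random group element and the target is literally the uniform distribution — here we must decompose $\EE_\eps[(\rho\circ\pi)(s_{i_{n+m}}^{\eps}\cdots s_{i_{n+1}}^{\eps})]$, track which part of it survives in the $N\to\infty$ average (the $W$-equivariant component that produces the constant $p_\iota\Tr(\EE[A_\iota])/d$ terms defining $\Sigma_m$) and which part is a genuine $O(c^m)$ remainder, and then verify that the surviving part, when re-summed over the windows $\iota$ via \cref{lmm: equidistribution of windows}, is itself absolutely convergent in $m$. I would handle this by working in $\End(\RR[W])$: write $\EE_\eps[(\rho\circ\pi)(\cdots)] = (\rho\circ\pi)$ applied to $T_{i_{n+1}}\cdots T_{i_{n+m}}$ evaluated at the appropriate basis element, split $\RR[W] = \RR\cdot\1\oplus\1^{\perp}$, note that on $\1^\perp$ the operator norm is $\le c^{m}$ by \cref{prop: is contraction} (after grouping the $m$ factors into blocks of length $M$), and observe that the $\RR\cdot\1$ component is exactly what, upon averaging $n$ over a residue-class-respecting window, feeds into $\Sigma_m$. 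Once this decomposition is in hand, absolute convergence of $\sum_m\Sigma_m$ follows from $\|\Sigma_m\|_{\op} = O(c^m)$, the dominated-convergence interchange is legitimate, and the uniform-in-$b$, locally-uniform-in-$p$ claims follow because $c = c_p$ is continuous in $p$ and independent of $b$ (\cref{lmm: quick mixing}), the $p_\iota = p_{\iota,b}$ vary continuously on $\mathcal{I}$ (\cref{lmm: equidistribution of windows}), and $\EE[A_\iota]$ depends continuously on $p$ and not at all on $b$; a uniform tail bound plus continuity of each partial sum then transfers to continuity of $\Sigma$.
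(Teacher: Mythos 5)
Your overall architecture --- pointwise convergence $\Sigma_{m;\,N,n_0}\to\Sigma_m$ from \cref{prop: SdN}, a summable dominating sequence, and a head/tail splitting --- is exactly the paper's, but the dominating bound you actually derive does not close the argument. You bound each summand of $\Sigma_{m;\,N,n_0}$ by its \cref{lmm: EAdU} main part $\frac1d\Tr(\EE[A_\iota])I_d=O(c^m)$ plus an error $O(c^{n-n_0})$, and conclude $\|\Sigma_{m;\,N,n_0}\|_{\op}\le O(c^{|m|})+O(1/N)$. The $O(1/N)$ is uniform in $m$, but \eqref{eq : big sum} has $2N-1$ values of $m$, so these errors accumulate to an $O(1)$ discrepancy that does not vanish as $N\to\infty$; equivalently, $a_m=O(c^{|m|})$ does not dominate $\Sigma_{m;\,N,n_0}$ when $|m|$ is of order $N$, where $c^{|m|}\ll 1/N$. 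The problem is concentrated at $n$ close to $n_0$: there the conditional law of $\pi(w_{n-1})$ given $\Fc_{n_0}$ is a point mass rather than near-uniform, your per-term error bound is $O(1)$, and there are order $N$ windows starting at each such $n$.

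The paper closes this with a bound that is uniform in $n$ and $n_0$ and decays in $m$: writing $\EE[\Delta X_n\Delta X_{n+m}^\top\mid\Fc_{n_0}]=\EE\bigl[\Delta X_n\,\EE[\Delta X_{n+m}\mid\Delta X_n,\Fc_{n_0}]^\top\mid\Fc_{n_0}\bigr]$ and applying \cref{prop: independence} through \cref{rmk: convexity} (since $\sigma(\Delta X_n)\vee\Fc_{n_0}\subseteq\Fc_{n+1}$) gives $\|\EE[\Delta X_n\Delta X_{n+m}^\top\mid\Fc_{n_0}]\|_{\op}\le Cc^{|m|}$ outright, hence $\|\Sigma_{m;\,N,n_0}\|_{\op}\le Cc^{|m|}$ with no $1/N$ term, and dominated convergence applies verbatim. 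Your argument can also be repaired internally: your observation that the window $i_{n+1},\dots,i_{n+m-1}$ mixes $\pi$ toward uniform, combined with $\sum_{w\in W}\rho(w)=0$ for the irreducible reflection representation, gives the stronger statement $\|\EE[A_\iota]\|_{\op}=O(c^m)$, not merely $|\Tr(\EE[A_\iota])|=O(c^m)$; since the error in \cref{lmm: EAdU} is proportional to $\|X\|_{\op}$ ($\rho(w)$ is orthogonal, so $\Ad_{\rho(w)}$ is an isometry for $\|\cdot\|_{\op}$), the per-term error improves to $O(c^{m}c^{n-n_0})$, which is summable over both indices. Two smaller slips: $\eps_{n+m}$ is not independent of $s_{i_{n+m}}^{\eps_{n+m}}\cdots s_{i_{n+1}}^{\eps_{n+1}}$, so your factorization of $\EE[A_\iota]$ should first condition on $\eps_{n+m}=1$; and the ``projection onto the $W$-invariants'' you propose to subtract is identically zero, which is exactly why the decay is clean.
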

\begin{proof}
    From~\cref{prop: independence} we know that for any $n\geq n_0$ and $m\geq 0$
    $$\left\|\EE\left[\Delta X_n \Delta X_{n+m}^\top \mid \Fc_{n_0}\right]\right\|_{\op} \leq \EE\left[\left\|\Delta X_n\right\|\cdot  \|\EE[\Delta X_{n+m}\mid\Delta X_n, \Fc_{n_0}]\|\right] \leq Cc^m,$$
    so $(Cc^{|m|})_{m\in \ZZ}$ dominates $(\Sigma_{m; \, N, n_0})_{m\in \ZZ}$. Since the latter converges absolutely, the first conclusion follows from \cref{prop: SdN} and the dominated convergence theorem. The second follows from the fact that both $c$ and $C$ are independent of $b$ and locally bounded functions of $p$.
\end{proof}

Critically, this does \textit{not} yield $\Sigma \succ 0$; each individual $\Sigma_m$ need not be positive semidefinite. 

\begin{proof}[Proof of \cref{lmm: limiting sigma}]
    By \eqref{eq: C} and \cref{lmm: EAdU}, $\Sigma$ is a scalar matrix:
    \[
        \Sigma = \frac{1}{d} \sum_{m\ge 0} \sum_{\iota\in \{0,\dots,d\}^{m+1}} p_\iota \Tr(\EE[A_\iota])\, I_d = \sigma^2 I_d.
    \]
    Positivity $\sigma > 0$ follows from \cref{prop: fn grows}. Continuity for $(b,p) \in \mathcal{I}\times(0, p)$ follows from uniform convergence and continuity of each $\Sigma_m$.
\end{proof}

\section{Proof of \texorpdfstring{\cref{thm: main theorem}}{main theorem}} \label{sec: higher moments}

\subsection{Higher Moments}
In \cref{sec: covariance limit} we established the convergence of the second moment of $(X_n-X_0)/\sqrt{n} \in \RR^d$.  
More generally, for any $k \geq 1$, the $k$-th \dfn{moment tensor} of a random vector $Y \in \RR^d$ is the tensor $\EE[Y^{\otimes k}] \in (\RR^d)^{\otimes k}$, defined by the evaluation rule
\[
\mathbb{E}\left[Y^{\otimes k}\right](v_1,\dots,v_k) \;=\; \mathbb{E}\left[ \prod_{i=1}^k (Y^\top v_i) \right],
\quad v_1,\dots,v_k \in \RR^d.
\]
Note that for $k=2$ this reduces to the usual bilinear form $v_1^\top \EE[YY^\top] v_2$. We equip the tensor space $(\RR^d)^{\otimes k}$ with the \dfn{operator norm}
$$\|T\|_{\mathrm{op}} :=\sup_{\|v_1\|=\cdots=\|v_k\|=1} \, |T(v_1,\dots,v_k)|.$$

We now establish that all moments of $X_n/\sqrt{n}$ to those of a Gaussian $X\sim \mathcal{N}(0, \sigma^2 I_d)$ (under the operator norm). As a consequence of Isserlis' theorem \cite{I18}, these are
\begin{align*}
    \mathbb{E}[X^{\otimes (2k+1)}] = 0, \qquad \mathbb{E}[X^{\otimes 2k}] = \sigma^{2k}
    \sum_{\pi \in \mathcal{P}_{2k}} \,
    \sum_{\substack{1\leq n_1,\dots,n_{2k}\leq r \\ n_i = n_j \,\forall (i,j)\in \pi}}
    e_{n_1} \otimes \cdots \otimes e_{n_{2k}},
\end{align*}
where $(e_i)_{1\leq i \leq d}$ is the standard basis of $\RR^d$, and $\mathcal{P}_{2k}$ the set of perfect matchings of $\{1,\dots,2k\}$. 

\begin{lemma} \label{lmm: higher moments converge} 
    Let $X\sim \mathcal{N}(0, \sigma^2 I_d)$. Then for every $k > 2$ we have
    $$\EE\left[\left(\frac{X_N-X_0}{\sqrt{N}}\right)^{\otimes k}\right] \xrightarrow[N\to \infty]{} \EE[X^{\otimes k}].$$
\end{lemma}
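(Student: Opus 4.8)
The plan is to mimic the structure of \cref{sec: covariance limit}: expand the $k$-th moment tensor into a sum of interaction terms indexed by the times $n_1 \leq \cdots \leq n_k$ at which the relevant steps are taken, and show that only the "clustered" configurations—where the $n_j$ split into pairs that are close together, with the pairs far apart—contribute in the limit. First I would write
$$\left(\frac{X_N - X_0}{\sqrt N}\right)^{\otimes k} = N^{-k/2}\sum_{0\le n_1,\dots,n_k < N} \Delta X_{n_1}\otimes \cdots \otimes \Delta X_{n_k},$$
so that, evaluating on test vectors and taking expectations, the moment tensor is $N^{-k/2}\sum \EE[\prod_j (\Delta X_{n_j}^\top v_j)]$. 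By symmetry it suffices to control ordered tuples $n_1\le\cdots\le n_k$. The key mixing input is a multilinear generalization of \cref{prop: independence}: conditioned on $\Fc_{n_\ell}$, the "tail" factors $\Delta X_{n_{\ell+1}},\dots,\Delta X_{n_k}$ have expectation exponentially small in the gap whenever a step is isolated, because $\EE[\Delta X_{n+n_0}\mid \Fc_{n_0}]$ decays like $c^{n}$; iterating this through the tower property and \cref{rmk: convexity} shows that any configuration containing a "lonely" index contributes $O(c^{\text{gap}})$.

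The main combinatorial step is then a counting argument at the scale $\sqrt N$. Partition the index tuples by the pattern of which ones are within a fixed window $L$ of each other. If $k=2j$ is even: the dominant configurations are those that form $j$ blocks, each block a pair of nearby indices, with consecutive blocks separated by more than $L$. There are $\binom{N}{j}(1+o(1)) \cdot j! / j!$—more carefully, $\sim N^j/j!$—choices of block locations, each contributing (after averaging the intra-block interaction over $W$ via \cref{lmm: EAdU}, exactly as in \cref{prop: SdN}) a bounded matrix factor; since we divide by $N^{k/2} = N^j$, these survive. Configurations with a block of size $\ge 3$, or a lonely index, are $O(N^{j-1/2})$ in count times bounded (or exponentially small) in value, hence vanish. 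If $k=2j+1$ is odd, every configuration has either a lonely index or an odd block; the lonely ones are exponentially suppressed and the odd-block ones are outnumbered by a factor $N^{-1/2}$, so the limit is $0$. Summing the surviving even-$k$ contributions over pairings reproduces exactly $\sum_{\pi\in\mathcal P_{2j}}\prod (\text{pair covariances}) = \sigma^{2j}\sum_\pi \sum e_{n_1}\otimes\cdots$, i.e. Isserlis' formula, once we feed in \cref{lmm: limiting sigma} to identify each pair-block average with $\sigma^2 I_d$; the factor $N^j/j!$ versus $\binom{N/2}{j}$-type normalization and the $j!$ orderings of the blocks cancel correctly.

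The hard part will be making the block decomposition clean: one must choose the window $L$ large (so that within-block-the-average-is-close-to-$\sigma^2 I_d$ up to $\varepsilon$) but fixed (so that the $O(c^L)$ cross-block errors, once multiplied by the $O(N^{k/2})$ terms, still vanish after dividing by $N^{k/2}$), and then send $N\to\infty$ before $L\to\infty$. This is a standard $\varepsilon$–$L$–$N$ three-parameter limit, but bookkeeping the error terms—there are $O(k^2)$ block-boundary interactions per configuration and one needs all of them exponentially small simultaneously, using \cref{prop: independence} conditionally at each boundary—is where care is required. A secondary subtlety is that the intra-block factor is a product of a bounded deterministic matrix $\EE[A_\iota]$ conjugated by the random $\Ad_{U_{n-1}}$, so one applies \cref{lmm: EAdU} at the \emph{left} end of each block and absorbs the resulting $O(c^{\cdot})$ into the cross-block error budget; the telescoping of the conjugations $U_{n-1}$ across blocks must be tracked exactly as in \eqref{eq: big}. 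Everything else—replacing the ordered-tuple sum by $k!$ times itself, passing to a conditional statement uniform in $n_0$—is routine and parallels \cref{sec: covariance limit}.
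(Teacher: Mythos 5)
Your architecture is the same as the paper's: expand the $k$-th moment tensor over index tuples, cluster indices into blocks by a proximity threshold, kill configurations containing a singleton via \cref{prop: independence}, count away configurations with too few blocks, and factor the surviving pair-matchings into products of pair covariances that \cref{lmm: limiting sigma} identifies with $\sigma^2\sum_i e_i\otimes e_i$, recovering Isserlis' formula. The one place your plan genuinely breaks is the error budget for the lonely-index configurations together with your choice of a \emph{fixed} window $L$ and the iterated limit ``$N\to\infty$ before $L\to\infty$.'' You assert these configurations are ``$O(N^{j-1/2})$ in count'' and that the $O(c^L)$ errors are ``multiplied by the $O(N^{k/2})$ terms,'' but the number of tuples containing a singleton block is $\Theta(N^k)$ (already the tuples with all indices pairwise more than $L$ apart account for this many). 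Each such term is only $O(c^L)$, so after dividing by $N^{k/2}$ their total contribution is of order $N^{k/2}c^L$, which diverges for every fixed $L$; the iterated limit cannot rescue this. Salvaging a fixed window would require extracting one exponential factor \emph{per} singleton (in its actual gap) and summing over gaps, which does not follow from a single application of \cref{prop: independence} and is delicate to set up multilinearly.

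The paper's resolution is to let the threshold grow: it takes $\delta=\Theta(\log N)$ with a constant large enough that $N^{k/2}c^\delta\to 0$, while the resulting $\mathrm{poly}(\log N)$ inflation of the block counts (there are $O_k(N^\ell\delta^{k-\ell})$ tuples with $\ell$ blocks) and of the factorization errors ($\delta^s O_s(c^\delta)$) remains negligible. Everything else in your outline — the cross-block decorrelation at each boundary, the conjugation bookkeeping via \cref{lmm: EAdU}, and the matching of normalizations with the pairing sum — goes through essentially as you describe once the window is taken logarithmic in $N$ rather than fixed.
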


Convergence of all moments implies $(X_n-X_0)/\sqrt{n} \xrightarrow[]{\mathcal{D}} \mathcal{N}(0, \sigma^2 I_d)$, proving \cref{thm: main theorem}.

\subsection{Proof of \texorpdfstring{\cref{lmm: higher moments converge}}{convergence of higher moments}}

Writing $X_N = X_0+ \Delta X_0 + \cdots + \Delta X_N$ we obtain
\begin{equation}\label{eq: expand}
    \frac{1}{N^{\frac{k}{2}}} \EE[(X_N-X_0)^{\otimes k}]=\frac{1}{N^{\frac{k}{2}}}\sum_{0\leq n_1, \dots, n_{k} \leq N}\EE[\Delta X_{n_{1}}\otimes \cdots \otimes \Delta X_{n_{k}}].
\end{equation}
We show that, in the limit, this expression is dominated by the second-order interaction terms. That is, by sequences that can be partitioned into pairs of close indices, as in \cref{fig:blocks}. To make this precise, fix some threshold $\delta$. Build a graph $G_{n_1, \dots, n_k}^\delta$ on $1, \dots, k$ by connecting $i\sim j$ whenever $|n_i-n_j|\leq \delta$. We call a connected component in this graph a \dfn{block}.

\begin{figure}[H]
    \centering
    \includegraphics[width=0.8\linewidth]{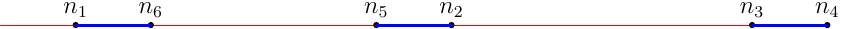}
    \caption{A sequence $n_1, \dots, n_6$ whose graph has blocks $\{1, 6\}$, $\{2, 5\}$, $\{3, 4\}$.}
    \label{fig:blocks}
\end{figure}

\begin{lemma}\label{prop: singletons}
    If $G_{n_1, \dots, n_k}^\delta$ contains a block with only one element, then
    $$\|\EE[\Delta X_{n_1}\otimes \cdots \otimes \Delta X_{n_{k}}]\| \leq O(c^\delta).$$
\end{lemma}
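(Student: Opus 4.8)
The plan is to isolate the singleton block, say index $j$ with $|n_j - n_\ell| > \delta$ for all $\ell \neq j$, and condition on the filtration $\mathcal{F}$ generated by all the randomness that is ``far'' from time $n_j$. Concretely, write $m := \min_{\ell \neq j} |n_j - n_\ell| > \delta$, and let $n^-$ be the largest index (over $\ell \neq j$) with $n_\ell \le n_j - m$ and $n^+$ the smallest with $n_\ell \ge n_j + m$; all other $n_\ell$ lie outside the interval $(n_j - m, n_j + m)$. The tensor $\Delta X_{n_1} \otimes \cdots \otimes \Delta X_{n_k}$, after pulling out $\Delta X_{n_j}$, is measurable with respect to the remaining Bernoulli variables and the (deterministic) cutting sequence. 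First I would take the conditional expectation of $\Delta X_{n_j}$ given $\mathcal{F}_{n_j}$ (or more carefully, given the $\sigma$-algebra generated by all $\eps_i$ with $i \notin \{n_j\}$ that are needed to evaluate the other factors): since $\eps_{n_j}$ is independent of everything else and $\Delta X_{n_j} = \eps_{n_j}\rho(\pi(w_{n_j - 1}))\beta_{i_{n_j}}$, the conditional expectation of $\Delta X_{n_j}$ reduces to $(1-p)\,\rho(\pi(w_{n_j-1}))\beta_{i_{n_j}}$, which is controlled by how $\pi(w_{n_j - 1})$ is distributed given the distant information.

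The key estimate is \cref{prop: independence}: conditioned on $\mathcal{F}_{n^- }$ (the randomness at times $\le n_j - m$), we have $\|\EE[\Delta X_{n_j} \mid \mathcal{F}_{n^-}]\| \le (1-p)|W|\,c^{\,n_j - n^-} \le (1-p)|W|\,c^{\,m} \le (1-p)|W|\,c^{\delta}$. To make this interact correctly with the other factors, I would use the tower property: condition the whole product on the $\sigma$-algebra $\mathcal{G}$ generated by $\{\eps_i : i \le n_j - m \text{ or } i \ge n_j\}$ — that is, everything except the block of Bernoullis strictly between $n_j - m$ and $n_j$. Wait — more robustly, condition on $\mathcal{G} = \sigma(\eps_i : i < n_j)$ together with $\{\eps_i : i \ge n_j, i \ne n_j\}$; then every factor $\Delta X_{n_\ell}$ with $\ell \ne j$ is $\mathcal{G}$-measurable, and
\[
\EE[\Delta X_{n_1}\otimes\cdots\otimes\Delta X_{n_k}] = \EE\Big[\big(\textstyle\bigotimes_{\ell \ne j}\Delta X_{n_\ell}\big)\otimes \EE[\Delta X_{n_j}\mid \mathcal{G}]\Big],
\]
with the $j$-th slot appropriately placed in the tensor order. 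By \cref{rmk: convexity} applied to the bound of \cref{prop: independence} (note $\mathcal{F}_{n_j - m} \supseteq$ or is comparable to a sub-$\sigma$-algebra of $\mathcal{G}$ on which the bound holds), we get $\|\EE[\Delta X_{n_j}\mid \mathcal{G}]\| \le (1-p)|W|\,c^{m} \le (1-p)|W|\,c^{\delta}$. Since each $\|\Delta X_{n_\ell}\| \le \max_i \|\beta_i\| =: B$ is uniformly bounded, taking operator norms and applying the triangle inequality for the tensor (the evaluation $|T(v_1,\dots,v_k)| \le \prod \|\text{slot}\|$) yields
\[
\|\EE[\Delta X_{n_1}\otimes\cdots\otimes\Delta X_{n_k}]\|_{\op} \le B^{k-1}\cdot (1-p)|W|\, c^{\delta} = O(c^\delta),
\]
with the implied constant depending only on $k$, $p$, and $W$.

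The main obstacle is bookkeeping the conditioning correctly: one must verify that all factors $\Delta X_{n_\ell}$ for $\ell \ne j$ are genuinely measurable with respect to the chosen $\sigma$-algebra $\mathcal{G}$ while $\eps_{n_j}$ remains independent of $\mathcal{G}$, and that the bound from \cref{prop: independence} — stated for the natural filtration $\mathcal{F}_{n_0}$ — transfers to $\mathcal{G}$ via \cref{rmk: convexity}. The subtlety is that $\mathcal{G}$ contains some ``future'' information (the $\eps_i$ for $i > n_j$) which is not a sub-$\sigma$-algebra of any $\mathcal{F}_{n_0}$; but since those future Bernoullis are independent of $(w_{n_j-1}, \eps_{n_j})$, one can first integrate them out or simply note that $\EE[\Delta X_{n_j}\mid \mathcal{G}] = \EE[\Delta X_{n_j}\mid \mathcal{F}_{n_j - m}]$ because $\Delta X_{n_j}$ depends only on $\eps_1,\dots,\eps_{n_j}$ and, given $\mathcal{F}_{n_j-m}$, is conditionally independent of $\{\eps_i : i > n_j\}$ and of $\{\eps_i : n_j - m < i \le n_j\}$ is exactly the freshly-averaged randomness. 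Once this measurability/independence structure is pinned down, the rest is the uniform boundedness of steps plus the exponential mixing bound, both already in hand.
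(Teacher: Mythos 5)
There is a genuine gap, and it sits at the heart of your conditioning scheme. Your instinct to isolate the singleton and invoke \cref{prop: independence} is the right one, but the $\sigma$-algebra you condition on is too fine. You take $\Gc = \sigma(\eps_i : i \neq n_j)$ and claim $\|\EE[\Delta X_{n_j}\mid\Gc]\|\le (1-p)|W|c^{\delta}$ by transferring the bound of \cref{prop: independence} via \cref{rmk: convexity}. That transfer goes the wrong way: \cref{rmk: convexity} passes bounds to \emph{coarser} $\sigma$-algebras, whereas your $\Gc$ is strictly \emph{finer} than $\Fc_{n_j-m}$ on the relevant window, since it contains every $\eps_i$ with $n_j-m<i<n_j$. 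Conditioning on those Bernoullis determines $w_{n_j-1}$ exactly, so, as you yourself compute, $\EE[\Delta X_{n_j}\mid\Gc]=(1-p)\rho(\pi(w_{n_j-1}))\beta_{i_{n_j}}$, whose norm is $(1-p)\|\beta_{i_{n_j}}\|=\Theta(1)$, not $O(c^\delta)$. The exponential smallness in \cref{prop: independence} comes precisely from averaging $\pi(w_{n_j-1})$ over the \emph{unconditioned} Bernoullis in the window of length $>\delta$ preceding $n_j$; conditioning on them destroys the estimate, and your fallback identity $\EE[\Delta X_{n_j}\mid\Gc]=\EE[\Delta X_{n_j}\mid\Fc_{n_j-m}]$ is false for the same reason (the left side is a random vector of order one, the right side has norm $O(c^m)$). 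A second, related defect: the factors $\Delta X_{n_\ell}$ with $n_\ell>n_j$ are \emph{not} $\Gc$-measurable, since $w_{n_\ell-1}$ depends on $\eps_{n_j}$, so the factorization $\EE[\bigotimes_\ell \Delta X_{n_\ell}]=\EE[(\bigotimes_{\ell\neq j}\Delta X_{n_\ell})\otimes\EE[\Delta X_{n_j}\mid\Gc]]$ is not legitimate either.

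The repair is to never condition on randomness at or after time $n_j-\delta$ when extracting the singleton, and to dispose of the later factors by their sup norm rather than by measurability. This is what the paper does: sort the indices and peel conditional expectations off from the largest index downward along the natural filtration, bounding $\|\EE[\Delta X_{n_i}\mid \Delta X_{n_{i-1}},\dots,\Delta X_{n_1}]\|$ by a constant for every $i$ except the singleton; at the singleton the conditioning $\sigma$-algebra is generated by steps at times at most $n_{j-1}+1\le n_j-\delta$, so \cref{prop: independence} together with \cref{rmk: convexity} (now applied in the correct direction, to a coarser $\sigma$-algebra) supplies the single factor of $O(c^\delta)$.
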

\begin{proof}
    Applying an isometry $T_\sigma : v_1\otimes \cdots \otimes v_{k} \mapsto v_{\sigma(1)}\otimes \cdots \otimes v_{\sigma(k)}$ we may assume without loss of generality that $n_1\leq n_2\leq \cdots \leq n_k$. By the law of iterated expectation
    \begin{align*}
        &\left\|\EE\left[\Delta X_{n_1}\otimes \cdots \otimes \Delta X_{n_{k}}\right]\right\|_{\op} = \left\|\EE\left[\Delta X_{n_1} \otimes \cdots \otimes \EE[\Delta X_{n_k}\mid \Delta X_{n_{k-1}}\vee \dots\vee \Delta X_{n_1}]\right]\right\|_{\op} \\
        &\leq \prod_{i=1}^k \left\|\EE\left[\Delta X_{n_i} \mid \Delta X_{n_{i-1}}\vee \dots\vee\Delta X_{n_1}\right]\right\|.
    \end{align*}
    All terms are at most $\|\Delta X_{n_i}\| \leq 1$. At the singleton, \cref{prop: independence} gives a bound of $O(c^\delta)$.
\end{proof}

We now show most sequences without singletons have exactly $\lfloor k/2\rfloor $ blocks.

\begin{lemma}\label{prop: few blocks}
    There are $O_k(N^\ell \delta^{k-\ell})$ sequences $1\leq n_1, \dots, n_k\leq N$ with $\ell\leq k/2$ blocks.
\end{lemma}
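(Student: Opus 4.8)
The plan is to count sequences $1 \le n_1, \dots, n_k \le N$ according to the block structure of the graph $G_{n_1,\dots,n_k}^\delta$, and show that the number of such sequences with at most $\ell$ blocks is $O_k(N^\ell \delta^{k-\ell})$. First I would observe that a block of size $t$ can be built by choosing one ``anchor'' index among its $t$ members freely (at most $N$ choices), and then all remaining $t-1$ members must lie within a bounded window determined by the anchor. More precisely, since the block is connected in $G^\delta$, any two of its members are joined by a path of at most $t-1$ edges, each of length $\le \delta$; hence every member of the block lies within distance $(t-1)\delta$ of the anchor, giving at most $(2(t-1)\delta + 1) = O_k(\delta)$ choices for each of the other $t-1$ members. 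So a block of size $t$ contributes a factor $O_k(N \cdot \delta^{t-1})$.

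Next I would assemble the global count. Fix a partition of $\{1,\dots,k\}$ into $\ell$ blocks with sizes $t_1, \dots, t_\ell$ (there are $O_k(1)$ such set partitions, since $k$ is fixed). For each choice, the number of index tuples realizing that block structure is at most $\prod_{j=1}^\ell O_k(N\,\delta^{t_j - 1}) = O_k\!\left(N^\ell\, \delta^{k - \ell}\right)$, using $\sum_j t_j = k$. Summing over the $O_k(1)$ partitions with exactly $\ell$ blocks, and then over all values $\ell' \le \ell$ (each giving a bound $O_k(N^{\ell'}\delta^{k-\ell'})$, which is dominated by $O_k(N^\ell \delta^{k-\ell})$ when $\delta \le N$, the regime of interest), yields the claim. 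One should note that this is an overcount — it bounds, for each unordered set partition into blocks, the tuples whose $G^\delta$-components refine that partition — but an upper bound is all that is needed.

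The main subtlety, and the only place requiring a little care, is the ``diameter'' estimate: a priori $G^\delta$-connectivity only says adjacent members differ by $\le\delta$, but a block of size $t$ could in principle be spread over a window of width $(t-1)\delta$, not $\delta$. Since $t \le k$ is bounded, $(t-1)\delta = O_k(\delta)$, so this is harmless and absorbed into the implicit constant; I would just make sure to state it explicitly so the factor $\delta^{t_j-1}$ per block is justified. No deep input is needed here — the lemma is purely combinatorial bookkeeping — and the genuine probabilistic content (that singleton blocks are negligible, and that the surviving two-block configurations reproduce the Gaussian moments) lives in the neighboring Lemmas~\ref{prop: singletons} and the computation to follow.
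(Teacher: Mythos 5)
Your proposal is correct and follows essentially the same argument as the paper: fix the set partition into blocks ($O_k(1)$ choices), allow $N$ choices for one anchor per block, and bound the remaining $k-\ell$ indices by $O_k(\delta)$ choices each via the $O_k(\delta)$ diameter of a connected block. Your explicit justification of the diameter bound is a welcome touch the paper glosses over, but the route is the same.
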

\begin{proof}
    Fix a partition of $\{1, \dots, k\}$ into $\ell$ blocks. Choose the positions of $n_1, \dots, n_{2k}$ one at a time, producing this partition. For each block there are at most $N$ choices for the first element, while each subsequent element must lie within distance $\delta\cdot k$. This shows there are at most $N^\ell (\delta k)^{k-\ell}$ such sequences. Finally, the number of partitions is constant in $k$.
\end{proof}

Suppose $k = 2s+1$. If there are no singletons, then the number of blocks is at most $s$. Therefore, \cref{prop: singletons} and \cref{prop: few blocks} imply
$$N^{-s-\frac{1}{2}} \EE\left[(X_N-X_0)^{\otimes (2s+1)}\right] = N^{-s-\frac{1}{2}} \cdot N^{2s+1}O(c^\delta) + N^{-s-\frac{1}{2}}\cdot O_k(N^s\delta^{s+1}) = o(1),$$
by choosing $\delta = \Theta(\log N)$ for a sufficiently large constant.

If $k=2s$, the contribution
of sequences whose graphs are not in $\mathcal{P}_{2s}$ is upper-bounded by
\begin{equation}\label{eq: only 2 blocks}
    N^{-s}\cdot N^{2s}O(c^\delta) + N^{-s}\cdot O_k(N^{s-1}\delta^{s+1}) = O_k(N^s c^\delta + N^{-1}\delta^{s+1}) = o(1),
\end{equation}
where we choose $\delta = \Theta(\log N)$ with a sufficiently large constant. 

\begin{lemma}\label{prop: key}
    Suppose $\delta = \Theta(\log N)$. Then for every $\pi\in \mathcal{P}_{2s}$, as $N\to \infty$ we have
    $$\frac{1}{N^{s}}\sum_{\substack{0\leq n_1, \dots, n_{2s}\leq N \\ G_{n_1, \dots, n_{2s}}^\delta= \pi}} \EE\left[\Delta X_{n_{1}}\otimes \cdots \otimes \Delta X_{n_{2s}}\right] \to  \sigma^{2s}\sum_{\substack{1\leq n_1, \dots, n_{2s}\leq r \\ n_i = n_j \, \forall (i, j)\in \pi}} e_{n_1}\otimes \cdots \otimes e_{n_{2s}}.$$
\end{lemma}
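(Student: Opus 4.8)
Fix a perfect matching $\pi \in \mathcal{P}_{2s}$ and consider the inner sum over sequences $n_1, \dots, n_{2s}$ whose block graph $G^\delta_{n_1,\dots,n_{2s}}$ equals $\pi$; by \cref{prop: few blocks} these are the dominant contribution, so we must identify their limit. The idea is to factor each such interaction term across blocks, apply \cref{sec: covariance limit} inside each block, and then count. First I would apply an isometry $T_\sigma$ to assume the block structure is ordered as consecutive pairs, say block $j$ is $\{2j-1, 2j\}$ with $n_{2j-1} \le n_{2j}$ and $n_{2j} - n_{2j-1} \le \delta$, while indices in different blocks are separated by more than $\delta$. Then I peel off blocks from the top: conditioning on $\Fc$ generated by all steps before the last block, the last two steps $\Delta X_{n_{2s-1}}, \Delta X_{n_{2s}}$ are, up to the adjoint twist $\Ad_{U}$ analysed in \cref{lmm: EAdU}, an interaction of the type controlled in \eqref{eq: big}. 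The gap of more than $\delta$ between blocks means the conditional expectation of the last block given the earlier steps equals its unconditional expectation up to an error $O(c^\delta)$ in operator norm — this is exactly the content of \cref{prop: independence} applied to the first step of the last block. Iterating, the whole tensor expectation factors as $\prod_{j=1}^s \EE[\Delta X_{n_{2j-1}} \otimes \Delta X_{n_{2j}}] + O_k(c^\delta)$, and since $\delta = \Theta(\log N)$ with a large enough constant, multiplying the error by the at most $N^{2s}$ sequences still gives $o(N^s)$.

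**Reducing to the covariance.** Having factored into blocks, each factor $\EE[\Delta X_{n_{2j-1}} \otimes \Delta X_{n_{2j}}]$ with $|n_{2j-1} - n_{2j}| \le \delta$ must be summed over the allowed positions. Here I would invoke precisely the computation behind \cref{lmm: limiting sigma}: summing $\EE[\Delta X_n \Delta X_{n+m}^\top \mid \Fc]$ over $n$ in a window of length $N$ and over all offsets $|m| \le \delta$ reproduces the partial sums $\sum_{|m|\le\delta}\Sigma_{m;N,n_0}$, which converge to $\sum_m \Sigma_m = \sigma^2 I_d$ as $N \to \infty$ and $\delta \to \infty$ (the tail beyond $\delta$ is $O(c^\delta)$ by the domination in \cref{lmm: domination}). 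Converting between the matrix notation $\Delta X_n \Delta X_{n+m}^\top$ and the tensor notation $\Delta X_n \otimes \Delta X_{n+m}$ is just the identification $(\RR^d)^{\otimes 2} \cong \mat_{d\times d}(\RR)$ under which $\sigma^2 I_d$ corresponds to $\sigma^2 \sum_{i=1}^d e_i \otimes e_i$. So each of the $s$ block-factors contributes, after normalising by $N^{1/2}$ per block and summing the block's "anchor" position over its range of length $\sim N$, a factor converging to $\sigma^2 \sum_i e_i \otimes e_i$.

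**Assembling the count.** Once each block yields $\sigma^2 \sum_{i} e_i \otimes e_i$ in its two tensor slots, the product over the $s$ blocks yields $\sigma^{2s}$ times a sum of basis tensors $e_{n_1} \otimes \cdots \otimes e_{n_{2s}}$ ranging over all assignments that are constant on each block — i.e. constant on each pair of $\pi$ — which is exactly the right-hand side of the claim after undoing the isometry $T_\sigma$ (the isometry permutes both the summation constraint and the tensor slots consistently, so the expression is invariant). The one bookkeeping point is that the normalisation $N^{-s}$ is absorbed as $s$ copies of $N^{-1}$, one per block, each matched against the $\sim N$ choices of that block's smallest index; the $O(\delta^s)$ choices of the within-block offsets are harmless since each offset sum is itself what converges to $\sigma^2 I_d$, not an extra factor.

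**Main obstacle.** The delicate step is the block-factorisation in operator norm: \cref{prop: independence} controls $\|\EE[\Delta X_{n+n_0}\mid \Fc_{n_0}]\|$ for a \emph{single} step, so to decouple an entire later block from the earlier tensor slots I must be careful to condition on the earlier steps, pull the expectation of the later block inside using the tensor/multilinearity structure exactly as in \cref{prop: singletons}, and verify that the adjoint-conjugation $\Ad_{U_{n-1}}$ from \eqref{eq: big} does not spoil the $O(c^\delta)$ bound — it does not, since $\Ad_{\rho(w)}$ is an isometry in operator norm for $w \in W$ (as $\rho(w)$ is orthogonal), so conjugating an $O(c^\delta)$ error leaves it $O(c^\delta)$. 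The secondary nuisance is checking that the error terms, accumulated over the $s$ block-peelings and then blown up by the sequence count, genuinely vanish; this forces the constant in $\delta = \Theta(\log N)$ to beat $2s / \log(1/c)$, which is exactly the kind of choice already made in \eqref{eq: only 2 blocks}.
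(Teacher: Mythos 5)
Your proposal is correct and follows essentially the same route as the paper: reduce to a canonical matching via the isometries $T_\sigma$, factor the expectation across blocks with an $O(c^\delta)$ error by conditioning on earlier blocks and using the mixing/adjoint-averaging estimates (this is the paper's \cref{prop: structure}), identify the per-block sum over anchors and offsets with the truncated covariance sums $\sum_{|m|\le\delta}\Sigma_{m;N,n_0}\to\sigma^2 I_d$ from \cref{lmm: limiting sigma}, and expand the resulting tensor power. The bookkeeping points you flag (the offset sum being absorbed into the covariance rather than contributing an extra $\delta^s$ factor, and the constant in $\delta=\Theta(\log N)$ beating the sequence count) are exactly the ones the paper handles.
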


The difference between matchings is purely notational, since $(T_\sigma)_{\sigma \in S_{2s}}$ acts transitively on $\mathcal{P}_{2s}$. Thus it suffices to consider $\pi = \pi_0 := \{(1, 2), \dots, (2s-1, 2s)\}$. Suppose the blocks are
$$(n_1, n_2) = (m_1, m_1+\delta_1),  \quad (n_3, n_4) = (m_2, m_2+\delta_2), \quad \dots, \quad (n_{2s-1}, n_{2s}) = (m_{s}, m_s+\delta_s), $$
with $|\delta_i| \leq \delta$ for all $i$, and all edges disconnected. Let $T_{n, \delta} := \Delta X_n \otimes \Delta X_{n+\delta}$, so
$$\EE[\Delta X_{m_1}\otimes \Delta X_{m_1+\delta_1}\otimes \cdots \otimes \Delta X_{m_s}\otimes \Delta X_{m_s+\delta_s}] = \EE[T_{m_1, \delta_1}\otimes \cdots \otimes T_{m_s, \delta_s}].$$
We now show we can break this expression apart into its order-2 terms, with small error. 
\begin{prop}\label{prop: structure}
In the situation above we have
$$\EE[T_{m_1, \delta_1}\otimes \cdots \otimes T_{m_s, \delta_s}] = \EE[T_{m_1, \delta_1}]\otimes \cdots \otimes \EE[T_{m_s, \delta_s}] + O_s(c^\delta).$$
\end{prop}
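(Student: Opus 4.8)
The plan is to reveal the $\eps$-variables block by block, from the highest indices down, and at each stage use the exponential mixing of the direction (Proposition~\ref{lmm: quick mixing}, via Lemma~\ref{prop: independence} and its tensor-analogue reasoning) to decouple the last block from all the earlier ones up to an $O(c^\delta)$ error. Concretely, I would induct on $s$. Order the blocks so that $m_1 \leq m_2 \leq \cdots \leq m_s$; since consecutive blocks are ``disconnected'' in $G^\delta$, we have $m_{j+1} > m_j + \delta_j + \delta$ for each $j$, so there is a gap of at least $\delta$ steps separating block $j$ from block $j+1$. Write $W = \EE[T_{m_1,\delta_1}\otimes\cdots\otimes T_{m_{s-1},\delta_{s-1}}]$ as the ``head'' and $T_{m_s,\delta_s}$ as the ``tail.'' The head is measurable with respect to $\sigma(\eps_1,\dots,\eps_{m_{s-1}+\delta_{s-1}})$, which is contained in $\Fc_{n'}$ for some $n' \leq m_s - \delta$.

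The key step is the decoupling estimate: conditioning on that head $\sigma$-algebra,
\[
\EE\big[W \otimes T_{m_s,\delta_s}\big] = \EE\big[W \otimes \EE[T_{m_s,\delta_s}\mid \Fc_{n'}]\big],
\]
and I claim $\EE[T_{m_s,\delta_s}\mid \Fc_{n'}] = \EE[T_{m_s,\delta_s}] + O_s(c^\delta)$. To see this, recall from \eqref{eq: big} that $T_{m_s,\delta_s} = \Delta X_{m_s}\otimes \Delta X_{m_s+\delta_s}$ can be written as $\Ad_{U_{m_s-1}}$ applied to a matrix $A_{i_{m_s},\dots,i_{m_s+\delta_s}}$ that depends only on $\eps_{m_s},\dots,\eps_{m_s+\delta_s-1}$ (independent of $\Fc_{n'}$), so taking conditional expectation reduces to $\EE[\Ad_{U_{m_s-1}}(\EE[A])\mid \Fc_{n'}]$, and Lemma~\ref{lmm: EAdU} (applied with base time $n'$, so the error is $O(c^{m_s-1-n'}) = O(c^{\delta})$) gives $\frac{1}{d}\Tr(\EE[A])\cdot I_d + O(c^\delta)$, which is exactly $\EE[T_{m_s,\delta_s}] + O(c^\delta)$ since the unconditional expectation is obtained the same way with $n' = 0$ giving the full average $\Ad_W$. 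The tensor operator norm is submultiplicative under $\otimes$, and $\|W\|_{\op} \leq 1$ since each $\|\Delta X_n\|\leq 1$, so the $O(c^\delta)$ error survives tensoring with the bounded head:
\[
\EE\big[W\otimes T_{m_s,\delta_s}\big] = \EE[W]\otimes \EE[T_{m_s,\delta_s}] + O_s(c^\delta).
\]
Then apply the inductive hypothesis to $\EE[W] = \EE[T_{m_1,\delta_1}\otimes\cdots\otimes T_{m_{s-1},\delta_{s-1}}] = \EE[T_{m_1,\delta_1}]\otimes\cdots\otimes\EE[T_{m_{s-1},\delta_{s-1}}] + O_{s}(c^\delta)$; absorbing the $(s-1)$ accumulated errors (each still $O(c^\delta)$, and there are $O_s(1)$ of them) into one $O_s(c^\delta)$ term completes the step.

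The main obstacle I anticipate is bookkeeping the measurability carefully — making sure the head really lies in $\Fc_{n'}$ with $n' \le m_s - \delta$ (this uses the gap between disconnected blocks and the fact that $\Fc_n = \sigma(X_0,\dots,X_n)$ determines $\eps_1,\dots,\eps_{n}$ together with the deterministic sequence $I$), and confirming that Lemma~\ref{lmm: EAdU}'s error term is genuinely governed by the gap $m_s - 1 - n' \geq \delta - 1$ rather than by $m_s$ itself — otherwise the error would not be uniform. A secondary subtlety is that within a single block $T_{m_j,\delta_j}$, the two steps $\Delta X_{m_j}$ and $\Delta X_{m_j+\delta_j}$ share the factor $U_{m_j-1}$ and a string of $\eps$'s, so the block cannot be split further; this is fine because the statement only asks to factor across blocks, not within them. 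The constant in $O_s(c^\delta)$ grows with $s$ (roughly like $s$ or a polynomial in $s$) but $s$ is fixed, so this is harmless.
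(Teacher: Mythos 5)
Your proposal is correct and follows essentially the same route as the paper: induct on $s$, use the tower property to condition the last block on a $\sigma$-algebra containing the earlier blocks (the paper uses $\Gc=\sigma(T_{m_1,\delta_1},\dots,T_{m_{s-1},\delta_{s-1}})\subseteq\Fc_{m_{s-1}+\delta_{s-1}}$ where you use the filtration directly), apply \cref{lmm: EAdU} with the gap of size at least $\delta$ to decouple up to $O(c^\delta)$, and use $\|T_{m_1,\delta_1}\otimes\cdots\otimes T_{m_{s-1},\delta_{s-1}}\|\leq 1$ to propagate the error. The only blemish is the notational slip where you write the head $W$ with an $\EE[\cdot]$ around it before conditioning on it; the argument itself is sound.
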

\begin{proof}
    We induct on $s$. Without loss of generality the blocks appear in increasing order. Suppose $s > 1$ and let $\Gc:= \sigma(T_{m_1, \delta_1}, \dots, T_{m_{s-1}, \delta_{s-1}})$. By the law of iterated expectation
    $$\EE[T_{m_1, \delta_1}\otimes \cdots \otimes  T_{m_{s-1}, \delta_{s-1}}\otimes T_{m_s, \delta_s}]= \EE\left[T_{m_1, \delta_1}\otimes \cdots \otimes T_{m_{s-1}, \delta_{s-1}} \otimes \EE[T_{m_s, \delta_s} \mid \Gc ]\right].$$
    Since $\Gc\subseteq \Fc_{m_{s-1}+\delta_{s-1}}$ and $\min(m_s, m_s+\delta_s) > m_{s-1} + \delta_{s-1} + \delta$, \cref{lmm: EAdU} yields
    $$\EE[T_{m_s, \delta_s} \mid \Gc ] = \EE[T_{m_s, \delta_s}] + O(c^\delta).$$
    Because $\|\EE[T_{m_1, \delta_1}\otimes \cdots \otimes T_{m_{s-1}, \delta_{s-1}}]\|\leq 1$, the inductive step is complete. 
\end{proof}

\begin{proof}[Proof of \cref{prop: key}]
    As above, suppose $\pi = \pi_0$. By \cref{prop: structure}
    \begin{align*}
    &\frac{1}{N^{s}}\sum_{\substack{0\leq n_1, \dots, n_{2s}\leq N \\ G_{n_1, \dots, n_{2s}}^\delta= \pi_0}} \EE[\Delta X_{n_{1}}\otimes \cdots \otimes \Delta X_{n_{2s}}] = \frac{1}{N^s} \sum_{\substack{0\leq m_1, \dots, m_{s}\leq N \\ |\delta_1|, \dots, |\delta_{s}|\leq \delta}} \EE[T_{m_1, \delta_1} \otimes \cdots \otimes T_{m_{s}, \delta_{s}}] + o(1) \\ 
    &=\frac{1}{N^s} \sum_{\substack{0\leq m_1, \dots, m_{s}\leq N \\ |\delta_1|, \dots, |\delta_{s}|\leq \delta}} \EE[T_{m_1, \delta_1}] \otimes \cdots \otimes \EE[T_{m_{s}, \delta_{s}}] + \delta^s O_s(c^\delta) + o(1),
    \end{align*}
    where the $o(1)$ accounts for sequences with \textit{fewer} than $s$ blocks (c.f. \eqref{eq: only 2 blocks}). We factor
    $$\frac{1}{N^s} \sum_{\substack{0\leq m_1, \dots, m_{s}\leq N \\ |\delta_1|, \dots, |\delta_{s}|\leq \delta}} \EE[T_{m_1, \delta_1}] \otimes \cdots \otimes \EE[T_{m_{s}, \delta_{s}}] =  \left(\frac{1}{N}\sum_{\substack{|\delta'| \leq \delta \\ 0\leq n \leq N}} \EE[T_{n, \delta'}]\right)^{\otimes s}.$$
    Comparing to \cref{lmm: limiting sigma}, the term in the right-hand side is nearly 
    $$\frac{1}{N}\sum_{\substack{\delta'\in \ZZ\\ 0\leq n \leq N}} \EE[T_{n, \delta'}]=\frac{1}{N} \EE[X_n\otimes X_n] = \sigma^2 \sum_{i=1}^r e_i\otimes e_i + o(1).$$ 
    There is an additional error of magnitude $O(c^\delta)$ from only adding over $|\delta'| \leq \delta$. Finally,
    $$\left(\sigma^2\sum_{i=1}^n e_i\otimes e_i\right)^{\otimes s}  =  \sigma^{2s}\sum_{\substack{1\leq n_1, \dots, n_{2s}\leq r \\ n_{2i-1} = n_{2i} \, \forall i}} e_{n_1}\otimes \cdots \otimes e_{n_{2s}},$$
    which yields the desired conclusion. 
\end{proof}

\section{Proof of \texorpdfstring{\cref{thm: main theorem 2}}{second main theorem}} \label{sec: martingale approximation}

\subsection{The Martingale Approximation}
In this section, we show that the scaled trajectories of the random billiard walk converge to Brownian motion. To do this, we need a global perspective on $(X_n)_{n\geq 0}$. We group consecutive steps into larger \dfn{moving steps} of sizes $a_1$,~$a_2$,~$\dots$. In order to decorrelate the displacement during moving steps, we insert \dfn{mixing steps} of lengths $b_1$,~$b_2$,~$\dots$ between them, as illustrated in~\cref{fig:martingale approx}. 

\begin{figure}[H]
    \centering
    \includegraphics[width=0.9\linewidth]{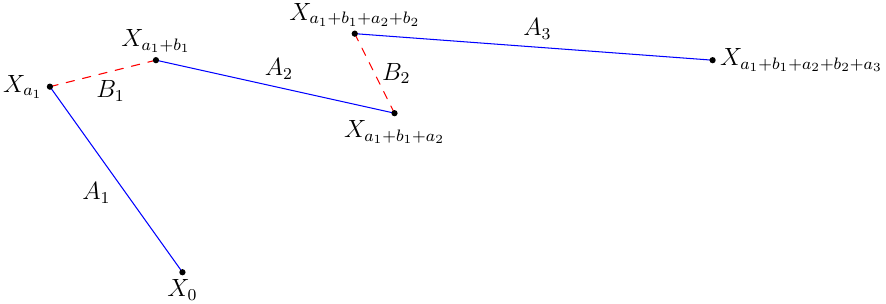}
    \caption{The definition of $(a_n)_{n\geq 0}$ and $(b_n)_{n\geq 0}$. Moving steps are drawn in solid blue, and mixing steps are drawn in dashed red.}
    \label{fig:martingale approx}
\end{figure}

For convenience, set $s_n:= a_1+b_1 + \cdots + a_n+b_n$. Then the moving and mixing steps are 
$$A_i := X_{s_{i-1} + a_i} - X_{s_{i-1}} \quad \text{and} \quad B_i:= X_{s_i} - X_{s_{i-1} + a_i},$$
respectively. We approximate $X_{s_n} -X_0 = A_1 + B_1 + \cdots + A_n + B_n$ with the martingale
\begin{equation}\label{eq: def martingale}
    M_n := \sum_{i=1}^n (A_i - \EE[A_i\mid\mathcal{F}_{s_{i-1}-b_{i-1}}]).
\end{equation}
It is adjusted to the filtration $\mathcal{G}_i := \mathcal{F}_{s_{i-1}-b_{i-1}}$, and satisfies $\EE[M_n\mid \Gc_{n-1}] = 0$ by construction.

The mixing steps should be sufficiently small to not have an effect on the overall displacement, but large enough to guarantee the near independence of moving steps. To that end we set 
$$a_n := \lceil n^{1/3}\rceil \quad \text{and} \quad b_n := \lceil 2\log_{1/c}(n+1)\rceil.$$
(Any exponent in $(0, 1/2)$ would work; we take $1/3$ for concreteness). Then $s_n = \Theta(n^{4/3})$, $\|B_i\|\leq i^{1/3} \ll a_i$, and also the correction terms are small (and summable):
\begin{equation}\label{eq: mixing steps}
    \|\EE[A_i\mid\mathcal{F}_{s_{i-1}-b_{i-1}}]\|\leq Cc^{b_{i-1}} \leq \frac{C}{i^2}.
\end{equation}

Define the rescaled trajectory $M^{(n)}$ by linear interpolation, as in \eqref{eq: rescaled trajectory}. By \cref{lmm: Slutsky}, to prove \cref{thm: main theorem 2} suffices to establish the following two results.

\begin{lemma}\label{lmm: martingale convergence}
    The trajectories $M^{(n)}$ converge to a Brownian motion with covariance $\sigma^2 I_d$. 
\end{lemma}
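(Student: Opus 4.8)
The plan is to invoke a standard functional central limit theorem for martingale difference arrays, reducing the convergence of $M^{(n)}$ to two verifiable conditions: a conditional Lindeberg (or bounded-jump) condition, and convergence of the predictable quadratic variation. Set $D_i := A_i - \EE[A_i \mid \Fc_{s_{i-1}-b_{i-1}}]$, so that $(D_i)$ is a sequence of martingale differences with respect to $(\Gc_i)$, and $M_n = \sum_{i=1}^n D_i$. The natural time-scale is $s_n = \Theta(n^{4/3})$, and the rescaled trajectory $M^{(n)}(t) = n^{-1/2}M_{s_{nt}}$ (after linear interpolation). First I would record the a priori bounds: since $\|A_i\| \le a_i = \Theta(i^{1/3})$ and the correction term is $O(i^{-2})$ by \eqref{eq: mixing steps}, we have $\|D_i\| \le a_i + C \le C' i^{1/3}$, which is $o(\sqrt{s_n}) = o(n^{2/3})$ uniformly, giving the Lindeberg condition for free.

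The heart of the argument is the convergence of the conditional quadratic variation
\[
\frac{1}{s_n} \sum_{i : s_i \le s_n t} \EE\bigl[D_i D_i^\top \mid \Gc_i\bigr] \xrightarrow[n\to\infty]{P} t\,\sigma^2 I_d.
\]
Here I would argue in two steps. First, $\EE[D_i D_i^\top \mid \Gc_i] = \EE[A_i A_i^\top \mid \Gc_i] - \EE[A_i \mid \Gc_i]\EE[A_i\mid\Gc_i]^\top$; the subtracted term has operator norm $O(i^{-4})$ by \eqref{eq: mixing steps} and is summable, hence negligible after dividing by $s_n$. Second, $A_i = X_{s_{i-1}+a_i} - X_{s_{i-1}}$ is a displacement over $a_i$ consecutive steps starting from time $s_{i-1}$, so \cref{lmm: limiting sigma} (applied with $N = a_i$, $n_0 = s_{i-1}$, using uniformity in $n_0$) gives $\EE[A_i A_i^\top \mid \Gc_i] = \EE[A_i A_i^\top \mid \Fc_{s_{i-1}}] + O(c^{b_{i-1}}) = a_i \sigma^2 I_d + o(a_i)$, where the first equality uses \cref{lmm: EAdU}/\cref{lmm: quick mixing} to replace conditioning on $\Gc_i = \Fc_{s_{i-1}-b_{i-1}}$ by conditioning on $\Fc_{s_{i-1}}$ up to an error that is summable after rescaling. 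Summing, $\sum_{i:s_i\le s_nt}\EE[D_iD_i^\top\mid\Gc_i] = \bigl(\sum_{i:s_i\le s_nt} a_i\bigr)\sigma^2 I_d + o(s_n)$, and since $\sum a_i$ and $s_n$ differ only by the mixing-step contribution $\sum b_i = O(n\log n) = o(s_n)$, this ratio tends to $t\sigma^2 I_d$. The convergence is deterministic (the bounds are almost sure), so convergence in probability holds trivially.

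With the quadratic variation and Lindeberg conditions in hand, the martingale FCLT (e.g. Ethier--Kurtz or the version in Billingsley/Jacod--Shiryaev) yields that $t \mapsto n^{-1/2}M_{s_{nt}}$ converges in $C([0,1],\RR^d)$ to a Brownian motion with covariance $\sigma^2 I_d$; a reindexing lemma (Slutsky, \cref{lmm: Slutsky}, plus the fact that $s_n$ is a regularly varying strictly increasing sequence with $s_n/n \to \infty$ handled by an explicit time-change) transfers this to the stated rescaling $M^{(n)}(t) = n^{-1/2} M_{nt}$, and finally the linear-interpolation error is $O(\max_i \|D_i\| / \sqrt{n}) = O(n^{-1/6})\to 0$.

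The step I expect to be the main obstacle is the clean passage from conditioning on $\Gc_i = \Fc_{s_{i-1}-b_{i-1}}$ to conditioning on $\Fc_{s_{i-1}}$ inside the quadratic-variation sum: one must check that the mixing steps genuinely decorrelate $A_i$ from the past at the rate $c^{b_{i-1}}$, which requires combining \cref{lmm: quick mixing} (direction mixing) with the representation \eqref{prop: DXn} of $\Delta X_n$ carefully, and confirming that the accumulated errors $\sum_i c^{b_{i-1}}/\! \bigl(\text{scale}\bigr)$ are summable — this is exactly why $b_n$ was chosen of order $\log n$ with the right constant. A secondary technical point is the bookkeeping of the time-change between the index $s_n$ and the index $n$ in the statement; this is routine but needs the monotonicity and growth rate of $s_n$ to be used explicitly.
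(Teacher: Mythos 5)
Your proposal is correct and follows essentially the same route as the paper: a vector martingale functional CLT, with the Lindeberg condition verified via the deterministic bound $\|D_i\|\leq a_i+O(i^{-2})=o(\sqrt{s_n})$ and the quadratic variation handled by discarding the summable correction terms and applying the uniform convergence of \cref{lmm: limiting sigma} together with a Ces\`aro argument and $\sum_i a_i/s_n\to 1$. The one step you flag as a worry — passing from conditioning on $\Gc_i=\Fc_{s_{i-1}-b_{i-1}}$ to the start time $s_{i-1}$ — is handled more simply than you anticipate: the tower property (\cref{rmk: convexity}) and the uniformity in $n_0$ of \cref{lmm: limiting sigma} transfer the convergence directly to the coarser $\sigma$-algebra, with no additional mixing estimate needed.
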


\begin{lemma}\label{lmm: good approximation}
    The approximation error $\|M^{(n)} -X^{(n)}\|_{\infty}$ converges to 0 in probability. 
\end{lemma}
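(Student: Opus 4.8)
The plan is to bound $\|M^{(n)} - X^{(n)}\|_\infty$ by controlling, uniformly over $t \in [0,1]$, the difference between the linear interpolations of $(M_k)$ and $(X_{s_k} - X_0)$, and then the difference between the interpolation of $(X_{s_k}-X_0)$ and the full process $(X_m - X_0)_{m \ge 0}$. First I would decompose
\[
X_{s_n} - X_0 - M_n = \sum_{i=1}^n B_i + \sum_{i=1}^n \EE[A_i \mid \mathcal{F}_{s_{i-1} - b_{i-1}}].
\]
The second sum is deterministically $O(1)$ by \eqref{eq: mixing steps}, since $\sum_i C/i^2 < \infty$. For the first sum, $\|B_i\| \le b_i = O(\log i)$ deterministically, so $\|\sum_{i=1}^n B_i\| = O(n \log n)$; but I need this at scale $\sqrt{s_n} = \Theta(n^{2/3})$, which is too crude. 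Instead I would use that the $B_i$ are mixing-step displacements and apply \cref{prop: independence}/\cref{prop: almost martingale}: after conditioning, $\EE[B_i \mid \mathcal{F}_{s_{i-1}+a_i}]$ has norm $O(1)$ (uniformly), so $\sum B_i$ is within $O(n)$ of a martingale with increments bounded by $b_i = O(\log i)$; an $L^2$ maximal inequality (Doob) then gives $\EE[\max_{k \le n}\|\sum_{i \le k} B_i\|^2] = O(\sum_{i\le n} b_i^2) = O(n (\log n)^2)$. Dividing by $s_n = \Theta(n^{4/3})$ and taking square roots, $n^{-2/3}\max_{k\le n}\|\sum_{i\le k}B_i\| = O(n^{-1/3}\log n) \to 0$ in $L^2$, hence in probability.

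Next I would handle the "within a block" discrepancy: for $m$ with $s_{k-1} \le m \le s_k$, the difference between $X_m - X_0$ and the interpolant of $(X_{s_j}-X_0)_j$ at the corresponding parameter is at most $\max(\|X_m - X_{s_{k-1}}\|, \|X_{s_k} - X_m\|) \le a_k + b_k = O(k^{1/3})$. Since a parameter $t\in[0,1]$ for the $n$-scaled process corresponds to $m \approx tn$, and then to a block index $k$ with $s_k \approx tn$, i.e. $k = \Theta((tn)^{3/4})$, this discrepancy is $O((tn)^{1/4}) = O(n^{1/4})$, and after dividing by $\sqrt{n}$ this is $O(n^{-1/4}) \to 0$ deterministically. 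The same reindexing (matching the time change $m \mapsto s_k$ to the block-counting parametrization) must be applied to align the interpolation knots of $M^{(n)}$ and $X^{(n)}$; since both use the same knot sequence $s_k$ this is a routine bookkeeping step, and uniform continuity of the limiting Brownian motion plus \cref{lmm: martingale convergence} absorbs any mismatch in the time change via \cref{lmm: Slutsky}.

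The main obstacle I anticipate is the $\sum B_i$ bound: a pathwise triangle-inequality estimate loses a factor of $n^{1/3}$ relative to what is needed, so it is essential to exploit cancellation via the martingale structure of the mixing-step displacements (using \cref{prop: independence} to show the conditional means are summable) together with Doob's $L^2$ maximal inequality. Everything else — the deterministic $O(1)$ from the correction terms, the within-block $O(n^{1/4}/\sqrt n)$ bound, and the reindexing between the "$m$" clock and the "block" clock — is routine once the scales $a_n = \lceil n^{1/3}\rceil$, $b_n = \Theta(\log n)$, $s_n = \Theta(n^{4/3})$ are tracked carefully. I would close by combining the three error bounds, each $o(1)$ in probability, via the triangle inequality and \cref{lmm: Slutsky}.
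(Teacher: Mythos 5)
Your overall architecture is the paper's: the same decomposition $X_{s_k}-M_k=\sum_i\EE[A_i\mid\Fc_{s_{i-1}-b_{i-1}}]+\sum_i B_i$, the deterministic $O(1)$ bound on the correction terms, a martingale approximation of $\sum_i B_i$ followed by Doob's $L^2$ maximal inequality, and the routine within-block/reindexing estimate. The one place where your write-up, taken literally, would fail is the centering of $\sum_i B_i$. You center each $B_i$ by $\EE[B_i\mid\Fc_{s_{i-1}+a_i}]$, which \cref{prop: almost martingale} only bounds by a constant $C$ per term; summing gives a distance of $O(n)$ between $\sum_i B_i$ and your martingale, and $O(n)\gg\sqrt{s_n}=\Theta(n^{2/3})$, so nothing survives the rescaling. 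The point of the construction is that one must condition at time $s_{i-1}$ (equivalently on $\sigma(B_1,\dots,B_{i-1})\subseteq\Fc_{s_{i-1}}$), so that the moving step of length $a_i$ sits between the conditioning time and the start of $B_i$; \cref{prop: independence} then gives $\|\EE[B_i\mid B_{i-1},\dots,B_1]\|=O(c^{a_i})=O(c^{i^{1/3}})$, and the total correction is $O(1)$, not $O(n)$. Your closing paragraph does say the conditional means should be \emph{summable} via \cref{prop: independence}, which is the correct mechanism and contradicts the $O(1)$-per-term/$O(n)$-total claim in the body, so I read this as a repairable slip rather than a missing idea --- but as written the body's estimate does not close the argument.

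With that fix, the rest goes through as you describe: the martingale $Z_k=\sum_{i\le k}(B_i-\EE[B_i\mid B_{i-1},\dots,B_1])$ has increments of norm $O(\log i)$, orthogonality of martingale increments gives $\EE[\|Z_n\|^2]=O(n\log^2 n)$ (this is in fact slightly cleaner than the paper's term-by-term expansion of $\EE[\|Z_n\|^2]$ using decorrelation of $B_i$ and $B_j$), Doob yields $\max_{k\le n}\|Z_k\|=o(\sqrt{s_n})$ in probability, and your within-block bound $O(n^{1/4})/\sqrt{n}\to 0$ (the paper's indexing gives the equivalent $O(n^{1/3})/\sqrt{s_n}\to 0$) handles the interpolation mismatch deterministically, so no appeal to continuity of the limiting Brownian motion is actually needed there.
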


\subsection{Proof of \texorpdfstring{\cref{lmm: martingale convergence}}{Martingale Convergence}}

We apply the Lindeberg–Feller martingale central limit theorem. This is a standard result for real valued martingales and can be straightforwardly adapted to $\RR^d$--martingales using the Cramér--Wold device, as in \cite{HH14}.

\begin{theorem} \label{thm: vector martingale clt}
    Let $(M_n)_{n \geq 1}\subset \RR^d$ be a martingale with respect to a filtration $(\mathcal{F}_n)_{n\geq 1}$. 
    Define $Z_n := M_{n}-M_{n-1}$. Suppose $(s_n)_{n\geq 0}$ is a deterministic sequence going to infinity such that
    \begin{itemize}
        \item For some $\Sigma \in \mat_{d\times d}(\RR)$
        \begin{equation}\label{eq: convergence to Sigma}
        \frac{1}{s_n}\sum_{i=1}^n \EE\left[Z_iZ_i^\top \mid \mathcal{F}_{i-1}\right]\xrightarrow[n\to \infty]{p} \Sigma.
        \end{equation}
        \item For every $\varepsilon > 0$ 
    \begin{equation}\label{eq: lindeberg}
        \frac{1}{s_n}\sum_{i=1}^n \EE\left[\|Z_i\|^2 \cdot \mathbf{1}_{\{\|Z_i\| > \varepsilon \sqrt{s_n} \}} \mid \mathcal{F}_{i-1} \right] \xrightarrow[n\to \infty]{p} 0.
    \end{equation}
    \end{itemize}    
    Then $M^{(n)}_t = \frac{1}{\sqrt{s_n}}M_{nt}$ converges to a Brownian motion with covariance $\Sigma$. 
\end{theorem}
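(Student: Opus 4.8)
The plan is to deduce the $\RR^d$-valued functional statement from the classical one-dimensional Lindeberg--Feller functional martingale CLT (\cite{HH14}) via the Cram\'er--Wold device, by establishing convergence of finite-dimensional distributions and tightness separately, and then combining them through Prokhorov's theorem.

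First I would handle the finite-dimensional convergence. Fix times $0 = t_0 < t_1 < \cdots < t_k \le 1$; by the Cram\'er--Wold theorem in $(\RR^d)^k$, it suffices to show that every linear functional $\sum_{j=1}^k \theta_j^\top M^{(n)}_{t_j}$ converges in distribution to $\sum_{j=1}^k \theta_j^\top B_{t_j}$, where $B$ is a Brownian motion with covariance $\Sigma$. Telescoping $M^{(n)}_{t_j} = \sum_{l\le j}(M^{(n)}_{t_l}-M^{(n)}_{t_{l-1}})$ and setting $\psi_l := \sum_{j\ge l}\theta_j$ gives $\sum_j\theta_j^\top M^{(n)}_{t_j} = s_n^{-1/2}\sum_{i=1}^{\nu_n(t_k)}\psi_{l(i)}^\top Z_i$, where $\nu_n(t)$ is the index associated to time $t$ and $l(i)$ is the block $(\nu_n(t_{l-1}),\nu_n(t_l)]$ containing $i$. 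This is $s_n^{-1/2}$ times the real-valued martingale $N$ with $i$-th increment $\psi_{l(i)}^\top Z_i$ (each such increment has conditional mean zero, so $N$ is a martingale w.r.t.\ $(\mathcal F_i)$). I would then verify the scalar hypotheses for $N$: its conditional variance process is $s_n^{-1}\sum_l \psi_l^\top\!\big(\sum_{i\in\text{block }l}\EE[Z_iZ_i^\top\mid\mathcal F_{i-1}]\big)\psi_l$, which tends in probability to $\sum_l (t_l-t_{l-1})\psi_l^\top\Sigma\psi_l$ by \eqref{eq: convergence to Sigma} applied over each block, while the Lindeberg condition \eqref{eq: lindeberg} for $N$ follows from $|\psi_l^\top Z_i|\le(\max_l\|\psi_l\|)\|Z_i\|$ and a change of threshold. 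The scalar functional martingale CLT then yields convergence of $s_n^{-1/2}N_{\nu_n(\cdot)}$ to a Brownian motion of variance $\sum_l(t_l-t_{l-1})\psi_l^\top\Sigma\psi_l$; evaluating at time $1$ gives that $\sum_j\theta_j^\top M^{(n)}_{t_j}$ converges to a centred Gaussian with this variance. A short computation using independence and stationarity of Brownian increments shows this equals $\operatorname{Var}\big(\sum_j\theta_j^\top B_{t_j}\big)$, and since every one-dimensional projection of $(M^{(n)}_{t_1},\dots,M^{(n)}_{t_k})$ converges to a Gaussian, the joint limit is Gaussian with the covariance of $(B_{t_1},\dots,B_{t_k})$.

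For tightness I would use that a sequence in $C([0,1],\RR^d)$ is tight if and only if each of its coordinate processes is tight in $C([0,1],\RR)$, since the modulus of continuity of $M^{(n)}$ is comparable to the maximum of the moduli of its $d$ coordinates. Each coordinate $e_i^\top M^{(n)}$ is a rescaled real martingale satisfying the scalar hypotheses (the case $\theta=e_i$ of the previous paragraph), so the scalar functional martingale CLT makes it converge in distribution in $C([0,1],\RR)$, hence tight; therefore $\{M^{(n)}\}$ is tight in $C([0,1],\RR^d)$. Combining tightness with the finite-dimensional convergence, Prokhorov's theorem gives $M^{(n)}\xrightarrow{\mathcal D} B$ in $C([0,1],\RR^d)$. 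The limit has continuous sample paths and the finite-dimensional distributions of a Brownian motion with covariance $\Sigma$ (Gaussian, independent increments over disjoint intervals by the uncorrelatedness established above), so it is one by definition.

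The main obstacle is the finite-dimensional step: the Cram\'er--Wold reduction in $(\RR^d)^k$ forces control of linear functionals that mix different coordinates at different times, so the scalar CLT must be applied not to a fixed projection of $M$ but to the reweighted, time-changed auxiliary martingale $N$ above. The only genuinely technical bookkeeping is checking that the per-block conditional variances aggregate to $(t_l-t_{l-1})\psi_l^\top\Sigma\psi_l$, which relies on the parametrization conventions already built into the scalar functional martingale CLT of \cite{HH14} (namely $s_{\nu_n(t)}/s_n\to t$); the rest is a routine transcription of the one-dimensional result.
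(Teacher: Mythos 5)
Your proposal is correct and takes exactly the route the paper intends: the paper offers no proof of this theorem beyond citing the scalar Lindeberg--Feller functional martingale CLT of [HH14] and asserting that the Cram\'er--Wold device handles the passage to $\RR^d$, and your write-up is a faithful, detailed execution of that reduction (finite-dimensional distributions via the reweighted auxiliary scalar martingale, coordinatewise tightness, Prokhorov). The one point worth flagging is the regularity condition $s_{\lfloor nt\rfloor}/s_n\to t$, which you correctly identify as necessary for the per-block variance aggregation but which the theorem statement omits (it only assumes $s_n\to\infty$); this is an imprecision in the statement itself rather than in your argument, and you are right that it must be supplied as part of the parametrization convention.
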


The Lindeberg condition \eqref{eq: lindeberg} holds because the martingale increments $\|M_i - M_{i-1}\| \leq a_i$ are sufficiently small: given any \( \varepsilon > 0 \), there exists \( n_\eps \in \mathbb{N} \) such that for all \( n \geq n_\eps \)
\[
\|M_i - M_{i-1}\| \leq i^{1/3} <  \varepsilon \sqrt{s_n} = \Theta(n^{2/3})
\]
for each $i\leq n$. 
In this case, all the indicators in~\eqref{eq: lindeberg} vanish and the entire sum is zero; in particular, it converges to zero in probability.

We now verify \eqref{eq: convergence to Sigma} holds. For each $i$ we have $M_i - M_{i-1} = A_i - \EE[A_i\mid \Gc_{i-1}]$, so
\begin{align*}
    &\EE\left[(M_i-M_{i-1})(M_i-M_{i-1})^\top \mid \Gc_{i-1}\right]\\
    &= \EE[A_iA_i^\top \mid\Gc_{i-1}] - \EE\left[A_i\EE[A_i\mid \Gc_{i-1}]^\top \mid \Gc_{i-1}\right] -\EE\left[\EE[A_i\mid \Gc_{i-1}]A_i^\top \mid \Gc_{i-1}\right]+ \|\EE[A_i\mid \Gc_{i-1}]\|^2. 
\end{align*}
Using $\|A_i\|\leq i^{1/3}$ and \eqref{eq: mixing steps} we estimate
\begin{align*}
    &\left\|\frac{1}{s_n} \sum_{i=1}^n \EE\left[(M_i-M_{i-1})(M_i-M_{i-1})^\top \mid \Gc_{i-1}\right] - \frac{1}{s_N}\sum_{i=1}^n \EE[A_iA_i^\top \mid\Gc_{i-1}] \right\|_{\op} \\
    &\leq \frac{1}{s_n} \sum_{i=1}^n \| \EE\left[(M_i-M_{i-1})(M_i-M_{i-1})^\top \mid \Gc_{i-1}\right] - \EE[A_iA_i^\top \mid\Gc_{i-1}]\|_{\op}\\
    &\leq \frac{1}{s_n} \sum_{i=1}^n \left(2 i^{1/3}\cdot \frac{1}{i^2} + \frac{1}{i^4}\right).
\end{align*}
The series converges, so the difference goes to 0. Thus it suffices to study the simpler sum involving $A_i$. Since  $a_i\to \infty$ and the convergence in \cref{lmm: limiting sigma} is uniform, we know
$$\frac{1}{a_i}\EE\left[A_iA_i^\top \mid\Gc_{i-1}\right]=\frac{1}{a_i}\sum_{i=1}^n \EE\left[(X_{s_{i-1} + a_i} - X_{s_{i-1}})(X_{s_{i-1} + a_i} - X_{s_{i-1}})^\top\mid \Gc_{i-1}\right]\to \sigma^2 I_d.$$
Therefore, a Césaro argument shows
$$\frac{\sum_{i=1}^n \EE\left[A_iA_i^\top\right]}{\sum_{i=1}^n a_i} \to\sigma^2 I_d.$$
The conclusion follows from noting that $\frac{1}{s_n} \sum_{i=1}^n a_i \to 1$ (since $b_i\ll a_i$).

\subsection{Proof of \texorpdfstring{\cref{lmm: good approximation}}{Lemma 2.1}} 

For any $t\in [0, 1]$ there is some $k$ with $s_{k} \leq nt\leq s_{k+1}$. Then $|nt-s_k|\leq k^{1/3}\leq n^{1/3}$, so
$$\|X_{nt} - M_{nt}\|\leq \|X_{nt} - X_{s_k}\| + \|X_{s_k} - M_k\| + \|M_k-M_{nt}\| \leq O(n^{1/3}) + \|X_{s_k}-M_k\|,$$
and hence
\begin{align*}
    \left\|X^{(s_n)} - M^{(n)}\right\|_\infty &=\sup_{t\in [0, 1]} \left\|\frac{1}{\sqrt{s_n}} X_{s_n t} - \frac{1}{\sqrt{s_n}}M_{nt}\right\| \\
    &\leq \frac{O(n^{1/3}) + \max_{1\leq k\leq n} \|X_{s_k}-M_k\|}{\sqrt{s_n}} \\
    &= o(1) + \frac{1}{\sqrt{s_n}}\max_{1\leq k\leq n} \|X_{s_k}-M_k\|.
\end{align*}
Therefore, it suffices to prove a large deviations estimate: with high probability, $M_{k}$ is close to $X_{s_k}$ for every $k\leq n$. From the definition of $(M_n)_{n\geq 0}$ in~\eqref{eq: def martingale}
\begin{equation} \label{eq: difference}
    \begin{split}
        X_{s_k} - M_k = \sum_{i=1}^k \EE[A_i\mid \mathcal{F}_{s_{i-1}-b_{i-1}}] + \sum_{i=1}^k B_{i}.
    \end{split}
\end{equation}
The first error term corresponds to $(X_n)_{n\geq 0}$ not being a perfect martingale. From \eqref{eq: mixing steps}
\begin{equation}\label{eq: total mixing error}
    \left\|\sum_{i=1}^n \EE\left[A_i\mid \mathcal{F}_{s_{i-1}-b_{i-1}}\right] \right\| \leq C\sum_{i=1}^n c^{b_{i-1}} < \sum_{i=1}^\infty \frac{C}{i^2},
\end{equation}
which yields a uniform finite bound.

\begin{remark}
    Without mixing episodes, the upper bound in \eqref{eq: mixing steps} would be $\Theta(N)$ which, critically, is larger than the maximum error $o(\sqrt{s_n}) = o(n^{2/3})$ we can tolerate. 
\end{remark}

Now, our goal is to estimate the probability that $B_1+\cdots + B_k$ is small for every $k\leq n$. For $i\neq j$, $B_i$ and $B_j$ are far apart and almost independent, so the situation is similar to that in Kolmogorov's maximal inequality. We use a vector variant of it from \cite{G73}.

\begin{lemma}[Doob's martingale inequality] \label{lmm: Doob's}
    Let $(Z_n)_{n\geq 0}\in \RR^d$ be a martingale. Then
    $$\Prob\left(\max_{0\leq k \leq n} \| Z_k\|\geq \lambda\right)\leq \frac{C_r}{\lambda^2}\EE\left[\|Z_n\|^2\right]$$
    for every $\lambda >0$, where $C_d$ is a constant that depends on the dimension.
\end{lemma}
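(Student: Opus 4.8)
The plan is to reduce this vector-valued statement to the classical one-dimensional Doob maximal inequality, using the elementary fact that the norm of an $\RR^d$-valued martingale is a real submartingale. If $\EE[\|Z_n\|^2]=\infty$ the asserted bound is vacuous, so I may assume each $Z_k$ is square-integrable (in particular integrable), and I let $(\mathcal{F}_k)$ denote the underlying filtration. Because the Euclidean norm is convex, conditional Jensen's inequality gives
$$\EE[\|Z_{k+1}\| \mid \mathcal{F}_k] \ \geq\ \|\EE[Z_{k+1}\mid\mathcal{F}_k]\| \ =\ \|Z_k\|,$$
so $(\|Z_k\|)_{0\le k\le n}$ is a nonnegative submartingale; since $t\mapsto t^2$ is convex and nondecreasing on $[0,\infty)$, the process $(\|Z_k\|^2)_{0\le k\le n}$ is a nonnegative submartingale as well.

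The first step is then to invoke the classical submartingale maximal inequality: for a nonnegative submartingale $(Y_k)_{0\le k\le n}$ and any $\mu>0$ one has $\mu\,\Prob\big(\max_{k\le n} Y_k \ge \mu\big)\le \EE[Y_n]$. Applying this to $Y_k=\|Z_k\|^2$ with $\mu=\lambda^2$ yields
$$\Prob\Big(\max_{0\le k\le n}\|Z_k\|\ge\lambda\Big) \ =\ \Prob\Big(\max_{0\le k\le n}\|Z_k\|^2\ge\lambda^2\Big) \ \leq\ \frac{1}{\lambda^2}\,\EE[\|Z_n\|^2],$$
which is exactly the lemma with $C_d=1$. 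If one would rather stay entirely within the scalar theory at the cost of a worse constant, an alternative is to apply the scalar $L^2$ maximal inequality to each coordinate martingale $(Z_k^{(j)})_{0\le k\le n}$, $1\le j\le d$, and then union-bound over the events $\{\max_{k\le n}|Z_k^{(j)}|\ge \lambda/\sqrt{d}\}$, using $\sum_{j=1}^d \EE[(Z_n^{(j)})^2]=\EE[\|Z_n\|^2]$; this produces a constant of the form $C_d=d^2$. Either route establishes the claim, and since only the constant matters for the later application, I would present whichever is cleanest.

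I do not anticipate a genuine obstacle. The only points that warrant a moment of care are the reduction to the square-integrable case (which makes the statement trivial otherwise and guarantees all the expectations appearing are finite) and the remark that $\max_{0\le k\le n}\|Z_k\|$ is a finite maximum of measurable functions, hence measurable, so the probability on the left is well defined. The sole substantive ingredient is the scalar Doob inequality for submartingales, which I would simply cite (as the paper does via \cite{G73}) rather than reprove.
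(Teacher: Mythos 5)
Your proof is correct. The paper itself gives no argument for this lemma---it simply cites Garsia \cite{G73} for ``a vector variant'' of Kolmogorov's maximal inequality---so your reduction to the scalar case is a genuinely self-contained alternative. The key observation, that conditional Jensen makes $(\|Z_k\|)_k$ a nonnegative submartingale and hence $(\|Z_k\|^2)_k$ one as well, is exactly the standard way to lift Doob's $L^2$ maximal inequality to $\RR^d$-valued martingales, and your reduction to the square-integrable case is sound (note also that $\EE[\|Z_n\|^2]<\infty$ forces $\EE[\|Z_k\|^2]<\infty$ for $k<n$ by conditional Jensen, so all expectations are finite). Your route even sharpens the statement: it shows one may take $C_d=1$, so the dimension-dependent constant in the lemma (written inconsistently as $C_r$ and $C_d$ in the paper) is unnecessary, and the later application in the proof of Lemma 7.2, which only needs some finite constant, goes through unchanged. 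The coordinate-wise union bound you mention as a fallback is also valid but strictly worse; the submartingale argument is the one to present.
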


For our purposes, we approximate $B_1+\cdots + B_n$ by the martingale
$$Z_n := \sum_{i=1}^n (B_i - \EE[B_i \mid B_{i-1}, \dots, B_1]) = B_1+\cdots + B_n + R_n.$$
Using \cref{prop: independence} we can estimate each error term by
\begin{align*}
    \|R_n\|\leq \sum_{i=1}^n \left\|\EE[B_i \mid B_{i-1}, \dots, B_1]\right\| \leq \sum_{i=1}^n c^{a_i} \leq \sum_{i=1}^\infty c^{i^{1/3}} <\infty.
\end{align*}
This and \eqref{eq: total mixing error} give $\|X_{s_k}-M_k\| \leq \|B_1 + \cdots + B_k\| + O(1) \leq \|Z_k\| + O(1)$. Thus, for $\eps > 0$
\begin{equation}
    \begin{split}\label{eq: union bound}
        \Prob\left(\frac{1}{\sqrt{s_n}} \max_{1\leq k\leq n}\|X_{s_k} - M_k\|\geq \eps\right) &\leq \Prob\left(\max_{1\leq k\leq n}\left(\|Z_k\| + O(1)\right) \geq \eps\sqrt{s_n}\right)\\
        &\leq \Prob\left(\max_{1\leq k\leq n} \|Z_k\|\geq \frac{\eps\sqrt{s_n}-O(1)}{2} \right) \\
        &\leq \frac{4C_d}{(\eps \sqrt{s_n}-O(1))^2}\EE\left[\|Z_n\|^2\right],
    \end{split}
\end{equation}
by \cref{lmm: Doob's}. Finally, to upper bound the right-hand side, we expand
\begin{align*}
    \EE\left[\|Z_n\|^2\right] = \sum_{i=1}^n \EE\left[\|B_i\|^2\right] + \sum_{i=1}^n\EE\left[B_i^\top R_n\right] +\sum_{i=1}^n \sum_{\substack{1\leq j\leq n \\ j\neq i}} \EE\left[B_iB_j^\top\right] + \EE\left[\|R_n\|^2\right].
\end{align*}
We estimate each term in this sum separately. 
\begin{itemize}
    \item Since $B_i\leq O(\log i)$, the first sum is at most $n\cdot O(\log^2 n)$.
    \item By Cauchy-Schwarz $|\EE[B_i^\top R_n]|\leq \EE[\|B_i\|]\cdot \EE[\|R_n\|]$, so the second sum is $O(n\log n)$. 
    \item Fix $i$. For each $j$ the events $B_i$ and $B_j$ are at distance at least $|s_i-s_j|$, so
    $$\|\EE[B_iB_j^\top]\| \leq \EE\left[\|B_i\|\cdot \|\EE[B_j \mid B_i]\|\right]\leq Cc^{|s_i-s_j|} O(\log n).$$
    This decays exponentially in $j$, so summing over all $j$ gives a bound of $O(\log n)$. 
    \item We always have $\|R_n\| \leq \Theta(n\log n)$, so $\EE[\|R_n\|^2]\leq \EE[\|R_n\|]\cdot \Theta(n\log n) = O(n\log n)$.
\end{itemize}
Thus $\EE[\|Z_n\|^2] = O(n\log^2 n)$, so the upper bound in~\eqref{eq: union bound} is $O(n^{-4/3})\cdot O(n\log^2 n) = o(1)$.

\section{Further Directions} \label{sec: further directions}

At present, little is known about the variance $\sigma_{b, p}^2$. Defant, Jiradilok and Mossel \cite{DJM25} show that, for certain directions $b$ corresponding to \dfn{Coxeter words}, the variance takes a remarkably simple form: $\sigma_{b, p}^2$ is a scalar multiple of $p/(1-p)$. It would be natural to investigate properties of $\sigma_{b, p}^2$ now that we know it extends continuously to $S^{d-1} \times (0,1)$.

\begin{conj}
    For every fixed $b\in S^{d-1}$, $\sigma_{b, p}^2$ is monotone in $p$. 
\end{conj}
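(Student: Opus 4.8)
The plan is to make the dependence of $\sigma_{b,p}^2$ on $p$ fully explicit and then differentiate. Write $q := 1-p$ for the transmission probability. From \cref{sec: covariance limit}, $\sigma_{b,p}^2 = \tfrac1d\Tr(\Sigma)$ with $\Sigma = \sum_{m\in\ZZ}\Sigma_m$; unwinding the definition of $A_\iota$ --- using that the $\eps_j$ are independent $\mathrm{Ber}(q)$ and that each $\rho(\pi(s_i))$ is an involution --- gives $\EE[A_\iota] = q\,\beta_{\iota_0}\beta_{\iota_0}^\top$ when $m=0$ and, for $m\ge 1$,
\[
    \EE[A_\iota] \;=\; q^2\,\beta_{\iota_0}\beta_{\iota_m}^\top\,\rho(\pi(s_{\iota_m}))\prod_{j=m-1}^{1}\bigl((1-q)\,I_d + q\,\rho(\pi(s_{\iota_j}))\bigr).
\]
An equivalent, cleaner description: conditioning on the random set $S := \{n : \eps_n = 1\}$ of transmission times makes $X_n - X_0$ a deterministic function of $S$ and of the deterministic cutting sequence, so $\sigma_{b,p}^2 = \lim_{N}N^{-1}\EE_S\|X_N-X_0\|^2$ with $S$ a product-$\mathrm{Ber}(q)$ random subset of $\mathbb N$. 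Either way $\sigma_{b,p}^2$ is an explicit series in $q$ whose coefficients are built from the root system, $\rho$, and the window frequencies $p_{\iota,b}$ of \cref{lmm: equidistribution of windows}, and which is summable by \cref{lmm: domination}.

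I would then recognise this series as a Green--Kubo resolvent. Let $\mathcal T_q := (1-q)\,\mathcal I_0 + q\,\mathcal R$ act on $\RR^d$-valued functions on $W\times Y$, where $(Y, T_b, \mu_b)$ is the uniquely ergodic system of \cref{rmk: quotient space X}, $\mathcal I_0$ advances $Y$ and fixes $W$, and $\mathcal R$ advances $Y$ and right-multiplies the $W$-coordinate by the current reflection; both $\mathcal I_0$ and $\mathcal R$ are isometries, so $\|\mathcal T_q\|\le 1$. The mixing estimates of \cref{sec: mixing}, their finite-quotient counterparts from \cref{sec: walk grows}, and the ergodicity of \cref{sec: ergodic} show $\mathrm{Id}-\mathcal T_q$ is boundedly invertible on the complement of the one-dimensional invariant subspace, uniformly for $q$ in compact subsets of $(0,1)$, whence a formula of the shape $\sigma_{b,p}^2 = \langle\phi,\,(\mathrm{Id}-\mathcal T_q)^{-1}\phi\rangle$ (modulo an elementary correction term) for a fixed vector $\phi$ built from the $\beta_i$. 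For rational $b$ the whole picture descends to a finite cyclic quotient and $\sigma_{b,p}^2$ is a rational function of $p$.

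The final step is to differentiate: $\partial_q\mathcal T_q = \mathcal R - \mathcal I_0$ acts on the $W$-factor as $(\text{involution}) - \mathrm{Id}$, which is negative semidefinite (cf.\ the $\pm1$ eigenspace decomposition in \cref{lmm: full contraction}), and one would like the sandwich identity $\partial_q(\mathrm{Id}-\mathcal T_q)^{-1} = (\mathrm{Id}-\mathcal T_q)^{-1}(\partial_q\mathcal T_q)(\mathrm{Id}-\mathcal T_q)^{-1}$ to pin down the sign of $\partial_q\sigma_{b,p}^2$. \textbf{The main obstacle} is that $\mathcal T_q$ is \emph{not} self-adjoint: already in the rational case the cutting sequence contributes a deterministic cyclic shift of the $Y$-coordinate, an isometry but not symmetric, so the sandwich is not of the form $A^\top(\partial_q\mathcal T_q)A$ and its sign is uncontrolled. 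Promising substitutes are to exploit that $\mathcal T_q$ is reversible up to the laser-reversal involution $T_b\leftrightarrow T_b^{-1}$ and run a generalized-reversibility monotonicity argument, to symmetrize $\mathcal T_q$ outright, or to resum $\partial_q$ of the explicit $q$-series and locate the cancellations directly. One must also keep in mind that the direction of monotonicity can depend on $b$ --- it is increasing in $p$ for the Coxeter-word directions of \cite{DJM25}, where $\sigma_{b,p}^2\propto p/(1-p)$, and presumably decreasing for directions whose $p=0$ motion is ballistic --- so the argument must be $b$-sensitive rather than establish a uniform sign; and since $\sigma_{b,p}^2$ may be discontinuous in $b$ along rational approximants of an irrational direction (the ergodic measure $\mu_b$ degenerates there, per \cref{rmk: quotient space X}), the rational case, even if settled by a finite computation, does not automatically yield the irrational one.
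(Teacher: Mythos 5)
This statement is \cref{thm: main theorem 2}'s neighbour only in numbering: it is stated in the paper as a \emph{conjecture}, in \cref{sec: further directions}, and the paper offers no proof of it. So there is nothing to compare your argument against; the only question is whether your proposal actually closes the problem, and it does not. Your setup is sound and matches the paper's machinery: the expansion $\sigma_{b,p}^2 = \tfrac1d\Tr\Sigma$ with $\Sigma=\sum_m\Sigma_m$, the explicit formula for $\EE[A_\iota]$ as a polynomial in $q=1-p$ (your computation of $\EE[A_\iota]=q^2\beta_{\iota_0}\beta_{\iota_m}^\top\rho(\pi(s_{\iota_m}))\prod_j((1-q)I_d+q\,\rho(\pi(s_{\iota_j})))$ is correct given \eqref{eq: big} and the independence of the $\eps_j$), and the summability from \cref{lmm: domination} are all consistent with \cref{sec: covariance limit}. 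But the proof has a hole exactly where you flag it: the sign of $\partial_q\sigma_{b,p}^2$. The sandwich identity for $\partial_q(\mathrm{Id}-\mathcal T_q)^{-1}$ only yields a definite sign when the resolvent factors can be paired into an expression of the form $A^{*}(\partial_q\mathcal T_q)A$, which requires self-adjointness (or reversibility) that your operator $\mathcal T_q$ does not have --- the deterministic advance along the cutting sequence is a non-symmetric isometry. Listing ``promising substitutes'' (symmetrization, generalized reversibility, direct resummation) is a research plan, not a proof; none of them is carried out, and it is not clear any of them works. A second, independent gap: the conjecture asserts monotonicity for \emph{every} $b$, including irrational ones, and as you yourself note, even a complete finite computation in the rational case would not transfer, since $b\mapsto\sigma_{b,p}$ is expected to be discontinuous at rational directions (\cref{conj: discontinuous}). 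So the proposal should be regarded as a reasonable attack strategy on an open problem, with its decisive step --- controlling the sign of the derivative of a non-reversible Green--Kubo series --- still missing.
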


It would also be interesting to find which directions $b$ maximize $\sigma_{b, p}^2$, for each $p$. Another direction is studying its regularity. The discussion in \cref{rmk: quotient space X} leads to our next conjecture.

\begin{conj}\label{conj: discontinuous}
For fixed $p$, the map $b \mapsto \sigma_{b, p}$ is discontinuous at rational directions.
\end{conj}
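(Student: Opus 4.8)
The plan is to read $\sigma_{b,p}^2$ off the ergodic picture of \cref{rmk: quotient space X} and to locate the discontinuity in the collapse of the orbit closure there. Recall from the proof of \cref{lmm: limiting sigma} that
\[
\sigma_{b,p}^2 \;=\; \frac1d\sum_{m}\ \sum_{\iota}\ p_{\iota,b}\,\Tr\!\big(\EE[A_\iota]\big),
\]
where the coefficients $\Tr(\EE[A_\iota])$ depend only on the window $\iota$ and on $p$, and the double sum converges absolutely and uniformly in $b$ by the $Cc^{|m|}$ bound of \cref{lmm: domination}. Writing $p_{\iota,b}=\mu_b(O_{\iota,b})$ as in \cref{rmk: quotient space X}, this reads $\sigma_{b,p}^2=\int_X g_{b,p}\,\mathrm d\mu_b$ for a function $g_{b,p}\colon X\to\RR$ assembled from the $\Tr(\EE[A_\iota])$, and by \cref{lmm: regularity of Owb} the cylinder sets $O_{\iota,b}$, hence $g_{b,p}$, vary continuously with $b$ — exactly what gave continuity of $\sigma_{b,p}$ on $\mathcal I$. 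The measure $\mu_b$ is where the rational/irrational dichotomy lives: for $b\in\mathcal I$ it is the absolutely continuous, flux‑weighted measure $\nu_b$ of \eqref{eq: limiting frequency} (all subtori $\TT_{\alpha,b}^i$ full), whereas for a rational $b_0$ the $T_{b_0}$‑orbit of any point is periodic modulo $Q^\vee$, so $\mu_{b_0}$ is the uniform measure on a \emph{finite} set $Y_{b_0}\subset X$.

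First I would make the reduction precise. Since $\mathcal I$ is dense in $S^{d-1}$, pick $b_n\in\mathcal I$ with $b_n\to b_0$. The frequencies $p_{\iota,b_n}$ are the full‑subtorus values of \eqref{eq: limiting frequency}, so by \cref{lmm: regularity of Owb} they converge to the full‑subtorus values $q_{\iota,b_0}$ at $b_0$; combined with the uniform domination this gives
\[
\sigma_{b_n,p}^2 \;\xrightarrow[n\to\infty]{}\; V_{\mathrm{gen}}(b_0,p):=\frac1d\sum_m\sum_\iota q_{\iota,b_0}\,\Tr\!\big(\EE[A_\iota]\big)=\int_X g_{b_0,p}\,\mathrm d\nu_{b_0}.
\]
On the other hand $\sigma_{b_0,p}^2=\int_X g_{b_0,p}\,\mathrm d\mu_{b_0}=:V_{\mathrm{per}}(b_0,p)$, an average over the finitely many points of the periodic orbit. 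Hence \cref{conj: discontinuous} is equivalent to the assertion that $V_{\mathrm{gen}}(b_0,p)\neq V_{\mathrm{per}}(b_0,p)$ for every rational $b_0$ and every $p\in(0,1)$. (Equivalently, approximate $b_0$ by rational directions of period tending to infinity: then $Y_{b_n}$ equidistributes, $\mu_{b_n}\to\nu_{b_0}$ weak‑$\ast$, and again $\sigma_{b_n,p}^2\to V_{\mathrm{gen}}(b_0,p)$.)

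Separating $V_{\mathrm{gen}}$ from $V_{\mathrm{per}}$ is the crux, and the reason this remains a conjecture: although the window statistics genuinely jump at a rational $b_0$ (for $b_0$ of short period most windows $\iota$ satisfy $\mu_{b_0}(O_{\iota,b_0})=0$ while $\nu_{b_0}(O_{\iota,b_0})>0$), one must rule out that this jump is conspiratorially annihilated by the signed weights $\Tr(\EE[A_\iota])$. I see two promising routes. The first is a closed form: $V_{\mathrm{per}}(b_0,p)$ is computable by the i.i.d.-over-periods method of \cite{DJM25} — the displacement over one cutting period, conditioned on the $W$‑class at its start, is identically distributed — which makes its $p$‑dependence rigid (for the Coxeter‑word directions of \cite{DJM25} it is literally a constant times $p/(1-p)$), whereas $V_{\mathrm{gen}}(b_0,p)$, an average over all local geometries in $X$, should not reduce to that form; matching even one low‑order coefficient in $p$ would force a contradiction. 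The second route exploits the basepoint: $V_{\mathrm{per}}(b_0,p)=\sigma_{b_0,p,L_0}^2$ records the periodic orbit through $L_0$, so for a rational $b_0$ whose periodic orbits through different basepoints are not mutual translates it must vary with $L_0$, while $V_{\mathrm{gen}}(b_0,p)$ is a single $L_0$‑independent number; any such $b_0$ is then automatically a point of discontinuity. The main obstacle is this last, deceptively soft step — proving $\sigma_{b_0,p,L_0}^2$ is genuinely non‑constant in $L_0$, or else separating the two weighted sums directly. I would test it first on $\widetilde A_2$, where $X$ is an explicit finite $1$‑complex and $Y_{b_0}$ a concrete finite orbit inside it, looking for a short window $\iota$ (a two‑ or three‑step correlation) whose contribution alone forces the gap $\nu_{b_0}(O_{\iota,b_0})\neq\mu_{b_0}(O_{\iota,b_0})$ to survive the weighting.
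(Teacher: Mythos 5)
The statement you are addressing is \cref{conj: discontinuous}, which the paper explicitly leaves open: there is no proof in the paper to compare against, only the heuristic in \cref{rmk: quotient space X} that the orbit closure $Y$ and the measure $\mu_b$ ``become singular as soon as there is a rational dependence.'' Your write-up formalizes exactly that heuristic, and the reduction you carry out is sound as far as it goes: the formula $\sigma_{b,p}^2=\frac1d\sum_m\sum_\iota p_{\iota,b}\Tr(\EE[A_\iota])$ is the one appearing in the proof of \cref{lmm: limiting sigma}, the uniform domination by $Cc^{|m|}$ from \cref{lmm: domination} does let you pass limits in $b$ through the sum, and \cref{lmm: regularity of Owb} together with \eqref{eq: limiting frequency} gives convergence of $p_{\iota,b_n}$ to the full-torus values at $b_0$ along fully irrational $b_n\to b_0$. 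So $\lim_{n}\sigma_{b_n,p}^2=V_{\mathrm{gen}}(b_0,p)$ is a legitimate conclusion, and $V_{\mathrm{gen}}(b_0,p)\neq V_{\mathrm{per}}(b_0,p)$ would indeed yield discontinuity at $b_0$.

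But this is not a proof, and you say so yourself: the decisive step --- separating $V_{\mathrm{gen}}$ from $V_{\mathrm{per}}$, i.e.\ ruling out that the jump in the window frequencies is cancelled by the signed weights $\Tr(\EE[A_\iota])$ --- is precisely the content of the conjecture and is left entirely open in both of your proposed routes. Two further cautions. First, your claimed ``equivalence'' is really only one implication: if $V_{\mathrm{gen}}=V_{\mathrm{per}}$ you have not shown continuity at $b_0$, since sequences of non-fully-irrational directions (partially rational, or rational with growing period but a fixed rational dependence) need not have $\sigma_{b_n,p}^2\to V_{\mathrm{gen}}(b_0,p)$; the limit measures along such sequences are supported on proper subtori, not on all of $X$. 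Second, your basepoint route presupposes that $\sigma_{b_0,p,L_0}$ genuinely varies with $L_0$ for some rational $b_0$, which is itself unproven (the paper only records that $\sigma$ \emph{may} depend on $L_0$ in the rational case). As a research plan the reduction is a useful and correct framing, and the suggestion to test a single short window in $\widetilde A_2$ is the right first experiment; as a proof of \cref{conj: discontinuous} it has a gap exactly where the conjecture is hard.
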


\begin{remark}
    A natural approach to proving \cref{thm: main theorem} would be to approximate an irrational direction $b$ by a sequence $(b_n)_{n \geq 0}$ of rational directions and apply a coupling argument. If \cref{conj: discontinuous} holds, this strategy is likely to fail.
\end{remark}

As a natural variant of the random reflection billiard walk, we introduce the \dfn{random refraction billiard walk} $(R_t)_{t\geq 0}$, where reflections are replaced by \dfn{refractions} (see~\cref{fig:refractions}). This model follows the notion of refraction with index $-1$, as introduced to algebraic combinatorics in~\cite{ADS24} and further developed in~\cite{DL25}. 

\begin{figure}[H]
    \centering
    \begin{subfigure}[b]{0.35\textwidth}
        \centering
        \includegraphics[width = \linewidth]{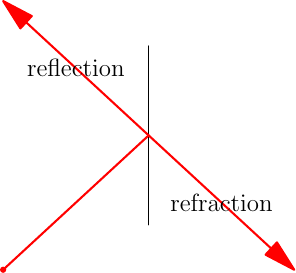}
        \caption{Definition of refractions}
        \label{fig:refractions}
    \end{subfigure}
    \hfill
    \begin{subfigure}[b]{0.58\textwidth}
        \centering
        \includegraphics[width = \linewidth]{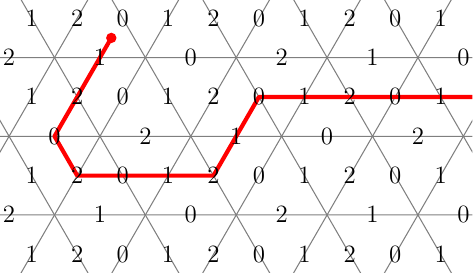}
        \caption{Sample path in $\widetilde{A}_2$.}
        \label{fig: refraction billiard}
    \end{subfigure}

    \caption{Description of the Random Refraction Billiard Walk}
    \label{fig: random refraction billiard walk}
\end{figure}

As shown in \cref{fig: refraction billiard}, each refraction reverses the walk's direction along the cutting sequence, breaking the Markov property of $(w_n)_{n\geq 0}$. Nevertheless, simulations and preliminary results suggest the random refraction billiard walk has similar limiting behavior.

\begin{conj}
    For every $R_0\in \RR^d$, $b\in S^{d-1}$ and $p\in (0, 1)$, the rescaled trajectories
    $$R^{(n)}_t = \frac{1}{\sqrt{n}} R_{tn}, \quad t\in [0, 1]$$
    converge to isotropic Brownian motion in $\RR^d$. 
\end{conj}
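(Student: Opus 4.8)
The plan is to carry over the entire architecture of the reflection case, the one structural change being that the \emph{deterministic} cutting sequence $I$ is replaced by a randomized \emph{read-head} that wanders along $I$, reversing whenever the walk refracts. First discretize: let $Y_n\in\RR^d$ be the centroid of the alcove containing $R_t$ after the $n$-th hyperplane crossing. A straight-line comparison as in \cref{lmm: X to L} reduces the conjecture to a functional CLT for $(Y_n)_{n\ge 0}$ with nondegenerate isotropic covariance. The right Markov chain is now the triple $(\pi(w_n),z_n,D_n)\in W\times X\times\{\pm1\}$, where $X=\Hc/Q^\vee$ is the compact quotient of \cref{rmk: quotient space X}, $z_n$ is the read head's position in $X$, and $D_n\in\{\pm1\}$ its direction of travel along $I$. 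The pair $(z_n,D_n)$ evolves \emph{autonomously}: at each crossing an independent $\mathrm{Ber}(p)$ coin flips $D$, and $z$ advances by $T_b^{D_n}$. The group coordinate is slaved to it, $\pi(w_n)=\pi(w_{n-1})\,\pi(s_{\ell(z_{n-1})})$ with $\ell(\cdot)$ the face label, and the step is $\Delta Y_n=\rho(\pi(w_{n-1}))\,\beta_{\ell(z_{n-1})}$ exactly as in \eqref{prop: DXn}. Here $(w_n)$ alone is not Markov — the obstruction flagged in the introduction — but the enlarged chain is.

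Two inputs are comparatively soft. First, a read-head invariance principle: the signs $(D_n)$ form a two-state chain that mixes exponentially (correlations decay like $|1-2p|^k$), so $P_n:=\sum_{k\le n}D_k$ is a classical persistent random walk; it satisfies a CLT $P_n/\sqrt n\to\mathcal N(0,v_p^2)$ with $v_p^2=(1-p)/p$, a Donsker-type invariance principle, and its occupation profile after $n$ steps is, after diffusive rescaling, a Brownian local time — in particular asymptotically flat at the lattice scale. Second, statistical regularity of the read labels, the analog of \cref{lmm: equidistribution of windows}: the empirical frequency of each window $(\ell(z_n),\dots,\ell(z_{n+d}))=\iota$ over $n<N$ converges, uniformly in the starting state, to some $q_{\iota,b}$, continuous in $b\in\mathcal{I}$. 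This follows by feeding the diffusive flatness above into the unique ergodicity of $(Y,T_b,\mu_b)$ from \cref{rmk: quotient space X}: a slowly varying weight integrated against a uniquely ergodic base forces the time-average of any window statistic along the read-head orbit to equal the corresponding $\mu_b$-average, symmetrized over the two directions of travel.

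The crux is the analog of \cref{lmm: quick mixing}: $\pi(w_{n+m})$ must equidistribute on $W$ given $\mathcal F_n$ at a geometric rate in $m$, uniformly in $n$. In the reflection model this was automatic because $\pi(w_n)$ is a product of \emph{i.i.d.} random generators; here $\pi(w_n)$ is a deterministic function of the read-head orbit, so all of the spreading must be manufactured by the coins driving that orbit. Over the $m$ coins after time $n$ one obtains a distribution on words $\pi(s_{\ell(z_n)})\cdots\pi(s_{\ell(z_{n+m-1})})$, indexed by the local environment — of which there are only finitely many for each fixed $m$, by the polynomial factor complexity of cutting sequences. The plan is: (a) fix $m$ large enough that every window of $I$ of that length contains all of $\{0,\dots,d\}$, as in \cref{sec: mixing}; (b) check that for each of those finitely many environments the semigroup generated by the achievable words is all of $W$, using that the simple reflections of $W$ occur among $\pi(s_0),\dots,\pi(s_d)$ and that a read-head excursion returning to its start contributes a conjugate $u\,\pi(s_{\ell(y)})\,u^{-1}$ of a single reflection, so backtracking supplies the remaining elements; (c) take the minimum over those environments for a uniform positive lower bound on the $m$-step transition probabilities, and iterate for geometric decay of the distance to uniformity. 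Compactness of $X$ enters exactly here, as in the paper's compactification of the dynamical system.

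Granting this mixing, the rest is a faithful transcription. The geometric corollaries $\|\EE[\Delta Y_{n+m}\mid\mathcal F_n]\|\le Cc^m$ and the almost-martingale bound (\cref{prop: independence}, \cref{prop: almost martingale}) carry over verbatim; the bootstrapping of \cref{prop: fn grows} via \cref{prop: induct} then gives a linear lower bound on $\EE[\|Y_n-Y_0\|^2]$, hence positivity of the limiting variance; the interaction-sum decomposition of \cref{sec: covariance limit}, with the averaging-over-$W$ lemma \cref{lmm: EAdU} collapsing $\EE[\Ad_{U_{n-1}}(X)\mid\cdot]$ to $\tfrac1d\Tr(X)I_d$ and the frequencies $q_{\iota,b}$ replacing $p_{\iota,b}$, yields $\tfrac1N\EE[(Y_N-Y_0)(Y_N-Y_0)^\top]\to\sigma_R^2 I_d$ uniformly; and the moving/mixing-step martingale approximation of \cref{sec: martingale approximation} upgrades this to the functional CLT, which descends to $(R_t)$ via the first step. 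I expect the main obstacle to be precisely the equidistribution of \cref{lmm: quick mixing}'s analog: one must show the intrinsic randomness of the persistent read-head walk — far from obvious, since refraction forces heavy backtracking, making $w_n$ typically much shorter as a word than $n$ — is nonetheless rich enough to mix $W$, uniformly over all the local environments an irrational $b$ produces.
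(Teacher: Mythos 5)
This statement is posed in the paper as an open conjecture, supported only by simulations, so there is no proof to compare against; what you have written is a strategy outline, and you correctly locate the crux at the analog of \cref{lmm: quick mixing}. However, that crux is false as you state it, for a concrete structural reason. In the refraction model the laser crosses a hyperplane at \emph{every} encounter (whether it transmits or refracts), so $w_n = w_{n-1}s_{j_n}$ deterministically multiplies by a simple reflection at each step and $\det\rho(\pi(w_n)) = (-1)^n\det\rho(\pi(w_0))$ is nonrandom. Hence $\pi(w_{n+m})$ given $\Fc_n$ is supported on a single coset of the index-two rotation subgroup $W^+=\ker(\det\circ\rho)$, and its total variation distance to uniform on $W$ is bounded below by $1/2$ for all $m$; your step (c), a uniform positive lower bound on all $m$-step transition probabilities, is unachievable. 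This is exactly the degeneracy flagged in the remark after \cref{prop: is contraction}: the support of each step's distribution lies in the nontrivial coset $W\setminus W^+$. The obstruction propagates: the Schur-lemma collapse in \cref{lmm: EAdU} uses the average of $\Ad_{\rho(w)}$ over all of $W$, whereas you would only have an average over a coset of $W^+$, whose image is the commutant of $\rho(W^+)$ — strictly larger than $\RR I_d$ in the dihedral types, where it contains the complex structure $J$ — so isotropy of the limiting covariance no longer follows from irreducibility of $\rho$ and needs a separate argument (e.g.\ via the symmetrization $\Sigma_m+\Sigma_{-m}$ together with a case analysis of $\End_{W^+}(\RR^d)$). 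The rank-one case is a stark warning that the randomness can collapse entirely: in $\widetilde{A}_1$ refraction with index $-1$ is the identity on velocities, the walk is deterministic, $W^+$ is trivial, and the conjecture as literally stated fails.

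Even after restricting to the correct coset, the mixing remains unproven and is genuinely harder than in the reflection model: there $\pi(w_n)$ is a product of independent random factors $p+(1-p)\pi(s_{i_k})$ and the contraction argument of \cref{lmm: full contraction}--\cref{prop: is contraction} applies factor by factor, whereas here the length-$m$ word $\pi(s_{i_{\kappa_{n+1}}})\cdots\pi(s_{i_{\kappa_{n+m}}})$ is a deterministic, highly structured (palindromically conjugated) function of the persistent walk $(\kappa_k)$, and you give no mechanism forcing its distribution to spread over the coset rather than concentrate on a thin subset of it. Your softer ingredients also hide real work: the empirical window frequencies along the read-head orbit are governed by a random occupation measure with long-range correlations (the head revisits sites a random number of times), so upgrading unique ergodicity of $(X,T_b,\mu_b)$ to a statement uniform in $n_0$ and in the random environment is not a routine transcription of \cref{lmm: equidistribution of windows}. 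In short: the architecture you propose is the natural one, but the two load-bearing lemmas (coset mixing on $W$ and the corrected averaging identity) are exactly what is open, and one of them is stated in a form that is provably wrong.
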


\section{Acknowledgements}

This research was conducted at the University of Minnesota Duluth REU, which is supported by Jane Street Capital, NSF Grant 2409861, and donations from Ray Sidney and Eric Wepsic. I am very grateful to Joe Gallian and Colin Defant for providing this wonderful opportunity. I would like to thank Noah Kravitz, Mitchell Lee, Rupert Li and Daniel Zhu for helpful discussions, as well as Eliot Hodges and Carl Schildkraut for feedback on the paper.


\begin{thebibliography}{9999999999}

\bibitem[ADS24]{ADS24}
A. Adams, C. Defant, and J. Striker. Toric promotion with reflections and refractions. \href{https://arxiv.org/abs/2404.03649}{arXiv:2404.03649}, (2024).

\bibitem[BB05]{BB05}
A. Björner and F. Brenti. \emph{Combinatorics of Coxeter Groups}. Springer, 2005.

\bibitem[BDHKL25]{BDHKL25}
G. Barkley, C. Defant, E. Hodges, N. Kravitz, and M. Lee. Bender--Knuth billiards in Coxeter groups. 
\emph{Forum Math. Sigma}, {\bf 13} (2025), e7.

\bibitem[D24]{D24}
C. Defant. Random combinatorial billiards and stoned exclusion processes. \href{https://arxiv.org/abs/2406.07858}{arXiv:2406.07858}, (2024).

\bibitem[DJ22]{DJ22}
C. Defant and P. Jiradilok. Triangular-grid billiards and plabic graphs. \href{https://arxiv.org/abs/2202.06943}{arXiv:2202.06943}, (2022).

\bibitem[DJM25]{DJM25}
C. Defant, P. Jiradilok, and E. Mossel. Random subwords and billiard walks in affine Weyl groups. \href{https://arxiv.org/abs/2501.11095}{arXiv:2501.11095}, (2025).

\bibitem[DL25]{DL25}
C. Defant and D. Liu. Homology in combinatorial refraction billiards. 
\emph{Res. Math. Sci.}, {\bf 12} (2025), no.~2, 36.

\bibitem[EW11]{EW11}
M. Einsiedler and T. Ward. \emph{Ergodic Theory: With a View Towards Number Theory}. 
Graduate Texts in Mathematics, Vol.~259. Springer, 2011.

\bibitem[Gar73]{G73}
A. M. Garsia. \emph{Martingale Inequalities: Seminar Notes on Recent Progress}. W. A. Benjamin, Reading, Mass., 1973.

\bibitem[HH14]{HH14}
P. Hall and C. Heyde. \emph{Martingale Limit Theory and Its Application}. Academic Press, 2014.

\bibitem[Iss18]{I18}
L. Isserlis. On a formula for the product-moment coefficient of any order of a normal frequency distribution in any number of variables. 
\emph{Biometrika}, {\bf 12} (1918), no.~1--2, 134--139.

\bibitem[Lam15]{L15}
T. Lam. The shape of a random affine Weyl group element and random core partitions. 
\emph{Ann. Probab.}, {\bf 43} (2015), no.~4, 1643--1662. 

\bibitem[Pol84]{P84}
D. Pollard. \emph{Convergence of Stochastic Processes}. Springer, New York, 1984. 

\bibitem[Ser77]{S77}
J.-P. Serre. \emph{Linear Representations of Finite Groups}. 
Graduate Texts in Mathematics, Vol.~42. Springer, New York, 1977.

\bibitem[Zhu25]{Z25}
H. Zhu. The maximum number of cycles in a triangular-grid billiards system with a given perimeter. 
\emph{Adv. Appl. Math.}, {\bf 168} (2025), 102888. 
\href{https://doi.org/10.1016/j.aam.2025.102888}{doi:10.1016/j.aam.2025.102888}.

\end{thebibliography}
\end{document}